\newtheorem{theorem}{Theorem}
\newtheorem{prop}{Proposition}
\newtheorem{lem}{Lemma}
\newtheorem{corollary}{Corollary}
\newtheorem{remark}{Remark}
\newcommand{\bydef}{\,\stackrel{\mbox{\tiny\textnormal{\raisebox{0ex}[0ex][0ex]{def}}}}{=}\,}
\newcommand{\eps}{\varepsilon}
\newcommand{\cC}{\mathcal{C}}
\begin{document}

\title{Global bifurcation diagram of steady states of systems of PDEs via rigorous numerics: a 3-component reaction-diffusion system}

\author{Maxime Breden \thanks{CMLA, ENS Cachan, 61 av. du Pr\'esident Wilson, 94235 Cachan, France ({\tt mbreden@ens-cachan.fr}).}
\and Jean-Philippe Lessard\thanks {D\'epartement de Math\'ematiques et de Statistique, Universit\'e Laval, 1045 avenue de la M\'edecine, Qu\'ebec, QC, G1V0A6, Canada ({\tt jean-philippe.lessard@mat.ulaval.ca}). This author was supported by NSERC.}
\and Matthieu Vanicat \thanks{CMLA, ENS Cachan, 61 av. du Pr\'esident Wilson, 94235 Cachan, France ({\tt mvanicat@ens-cachan.fr}).}}

\date{January 2013}

\maketitle

\vspace{-.5cm}

\begin{abstract}
In this paper, we use rigorous numerics to compute several global smooth branches of steady states for a system of three reaction-diffusion PDEs introduced by Iida et al. [J. Math. Biol., {\bf 53}, 617--641 (2006)] to study the effect of cross-diffusion in competitive interactions. An explicit and mathematically rigorous construction of a global bifurcation diagram is done, except in small neighborhoods of the bifurcations. The proposed method, even though influenced by the work of van den Berg et al. [Math. Comp., {\bf 79}, 1565--1584 (2010)], introduces new analytic estimates, a new {\em gluing-free} approach for the construction of global smooth branches and provides a detailed analysis of the choice of the parameters to be made in order to maximize the chances of performing successfully the computational proofs.
\end{abstract}

\begin{center}
{\bf \small Keywords} \\ \vspace{.05cm}
{ \small Equilibria of systems of PDEs $\cdot$ Bifurcation diagram $\cdot$ Contraction mapping $\cdot$ Rigorous numerics}
\end{center}

\begin{center}
{\bf \small Mathematics Subject Classification (2010): 65N15 $\cdot$ 37M20 $\cdot$ 35K55} 
\end{center}

\section{Introduction} \label{sec:introduction}
Establishing the existence of non constant bounded solutions to parameter dependent systems of reaction-diffusion PDEs is a classical problem in nonlinear analysis. Methods like singular perturbation theory \cite{MR1636690}, local bifurcation theory (see \cite{MR2154033} and the references therein), the local theory of Crandall and Rabinowitz \cite{MR0288640}, and the Leray-Schauder degree theory can be used to prove existence of such non constant solutions. However, it appears difficult in practice to use such results to answer specific questions about the solutions, e.g. determining the number of (or a lower bound on the number of) non constant co-existing steady states. If the parameter dependent system under study undergoes a bifurcation from a trivial solution, it also appears difficult to determine rigorously the behaviour of the solutions on the global bifurcating branches. The global bifurcation theorem of Rabinowitz \cite{MR0301587}, although powerful and general, can convey only partial information about the global behaviour of the branches. 

While the development of theoretical knowledge about existence of solutions of systems of PDEs is slow, meticulous and often hard to grasp for non experts, there exist several user friendly bifurcation softwares that can efficiently produce tremendous amount of bounded approximate solutions. However, with any numerical methods, there is the question of validity of the outputs. 

The goal of this paper is to propose, in the context of studying non constant steady states for systems of PDEs, a rigorous computational method in an attempt to fill the gap between the above mentioned theoretical and computational advances. More specifically, we compute rigorously a global bifurcation diagram of steady states for the 3-component reaction-diffusion system arising in population dynamics introduced in \cite{MR2251792}, and given by

\begin{equation}
\label{eq:mimura_system}
  \left\{
      \begin{aligned}
        & \partial_t x = d\Delta x + (r_1-a_1(x+y)-b_1z)x + \frac{1}{\varepsilon}\left(y\left(1-\frac{z}{N}\right)-x\frac{z}{N}\right),  \\
        & \partial_t y = (d+\beta N)\Delta y + (r_1-a_1(x+y)-b_1z)y - \frac{1}{\varepsilon}\left(y\left(1-\frac{z}{N}\right)-x\frac{z}{N}\right) , \\
        & \partial_t z = d\Delta z + (r_2-b_2(x+y)-a_2z)z ,
              \end{aligned}
    \right.
\end{equation}
where $x=x(\xi,t)$, $y=y(\xi,t)$ and $z=z(\xi,t)$ are defined for $t >0$, $\xi \in [0,1]$ with Neumann boundary conditions and with the following fixed numerical values $a_1=3$, $a_2=3$, $b_1=1$, $b_2=1$, $r_1=5$, $r_2=2$, $\beta =3$, $\varepsilon =0.01$ and $N=1$. We consider the diffusion $d$ as a free parameter. The above choice of fixed parameter values is chosen so that, when the parameter $d$ varies, Turing's instability (e.g. \cite{turing}) can be observed. More precisely, varying the diffusion $d$ can destabilize the constant steady state solution given by $(x,y,z)=(\frac{91}{64},\frac{13}{64},\frac{1}{8})$, which is stable for the corresponding finite dimensional ODE model without diffusion terms. Hence, by changing the diffusion $d$, interesting non trivial bounded stationary patterns can arise as a result of Turing's instability, which is one of the most important mechanisms of pattern formation. Note that considering a 3-component reaction-diffusion system allows having Turing instability with quadratic reaction terms, as opposed to the standard cubic case. A nice consequence of this choice of system is that the nonlinear terms are easier to estimate than in the cubic case.

The system of PDEs \eqref{eq:mimura_system} has been introduced in \cite{MR2251792} with the goal of studying the (theoretically harder to study) cross-diffusion model for the competitive interaction between two species
\begin{equation}
\label{eq:mimura_cross_diffusion_system}
  \left\{
      \begin{aligned}
        & \partial_t u_1(\xi,t) = \Delta [(d_1+\alpha u_2)u_1] + (r_1-a_1 u_1 -b_1 u_2) u_1 ,    \\
        & \partial_t u_2 (\xi,t) = \Delta [(d_2+\beta u_1)u_2]  + (r_2-b_2u_1-a_2 u_2)u_2,    \\
      \end{aligned}
    \right.
\end{equation}
where $u_1=u_1(\xi,t), u_2=u_2(\xi,t)$ are defined for $t >0$, $\xi \in [0,1]$, and satisfy Neumann boundary conditions. As already mentioned in \cite{MR2251792}, model \eqref{eq:mimura_cross_diffusion_system} falls into quasi-linear parabolic systems so that even the existence problem of solutions is not trivial and has been investigated by several authors (e.g. see \cite{MR1242579,MR1974423,MR1616969} and the references therein). It is shown in \cite{MR2251792} that the solutions of \eqref{eq:mimura_cross_diffusion_system} can be approximated by those of \eqref{eq:mimura_system} in a finite time interval if the solutions are bounded and provided $\varepsilon$ is small enough. It is also proved in \cite{MR2437576} that the steady states of the reaction-diffusion system \eqref{eq:mimura_system} approximates the steady states of the cross-diffusion system \eqref{eq:mimura_cross_diffusion_system} as $\varepsilon$ goes to $0$. Hence, developing a rigorous computational approach to prove existence of non constant steady states of \eqref{eq:mimura_system} seems to be an interesting problem, as it may shed some light on how to rigorously study non constant steady states of a cross-diffusion model (see Theorem~\ref{thm:cross_diffusion}). Here is a first result, whose proof can be found in Section~\ref{sec:proofs}.

\begin{theorem}[\bf Rigorous computation of a global bifurcation diagram] \label{thm:bif_diagram}
Except in small neighborhoods of the bifurcations, each point in Figure~\ref{japonais} represents exactly one steady state for \eqref{eq:mimura_system}, each curve on the diagram is smooth and between the apparent bifurcations (in black dots), there are no secondary bifurcations of steady states.
\end{theorem}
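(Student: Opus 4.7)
The plan is to recast the steady state problem for \eqref{eq:mimura_system} as a zero-finding problem in a Banach space of Fourier coefficients. Because the boundary conditions are Neumann on $[0,1]$, expanding $x$, $y$, $z$ in cosine series $x(\xi) = \sum_{k \geq 0} x_k \cos(k\pi\xi)$ (and similarly for $y$ and $z$) turns the stationary equations into an infinite system of polynomial equations $F(a,d) = 0$ on the coefficient sequence $a = (x_k, y_k, z_k)_{k \geq 0}$, with $d$ the free diffusion parameter. I would work in a Banach space of algebraically (or geometrically) weighted sequences in which the convolution products coming from the quadratic reaction terms are well controlled.

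For fixed $d$, the standard rigorous-numerics strategy applies: compute a numerical approximation $\bar a$ of a non-constant equilibrium truncated at some mode $m$, choose an approximate inverse $A$ of $DF(\bar a, d)$, and form the Newton-like map $T(a) = a - AF(a, d)$. Deriving an upper bound $Y$ on $\|T(\bar a) - \bar a\|$ and a radius-dependent bound $Z(r)$ on $\sup_{a \in \bar a + B(r)} \|DT(a)\|$ yields a radii polynomial whose negativity on some interval provides a ball of radius $r$ in which $T$ is a contraction, hence a rigorous enclosure of a locally unique steady state.

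To cover a whole smooth arc of the bifurcation diagram rather than isolated points, I would parametrize the branch by a pseudo-arclength $s \in [s_0, s_1]$, take $(a(s), d(s))$ as the unknown, and augment $F$ by a tangential equation. The \emph{gluing-free} philosophy announced in the abstract would then be implemented as a single uniform contraction on a tube around the whole numerical branch segment: derive $s$-dependent bounds $Y(s)$ and $Z(s,r)$ and a single radii polynomial whose negativity yields simultaneously existence, local uniqueness, smoothness of $s \mapsto (a(s), d(s))$ (through the implicit function theorem applied to $T$ parameterized by $s$), and invertibility of the relevant partial Jacobian along the whole segment. This last property rules out secondary bifurcations inside the segment, since any such bifurcation would force the Jacobian to become singular. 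Concatenating the segments produced between consecutive apparent bifurcations then yields Figure~\ref{japonais}.

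The main technical obstacle is obtaining analytic tail bounds on the cosine series, and bounds on the action of $DF$, that are sharp enough for the contraction to close over segments long enough to be practical. The quadratic terms $(x+y)x$, $xz$, $yz$ couple three distinct unknowns, so careful estimates of discrete convolutions of weighted sequences are needed, together with a good strategy for choosing the truncation dimension $m$, the tube radius $r$ and the continuation step in $s$. I expect most of the effort in Section~\ref{sec:proofs} to go into producing these bounds efficiently and selecting the parameters of the scheme so that the segmentation does not need to be refined wastefully away from, nor too coarsely close to, the apparent bifurcation points.
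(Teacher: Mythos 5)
Your proposal follows essentially the same route as the paper: recasting the steady states of \eqref{eq:mimura_system} as a zero-finding problem for the cosine-coefficient map in a weighted sequence space, validating each pseudo-arclength segment by a Newton-like operator with $s$-dependent $Y$ and $Z$ bounds packaged into radii polynomials, obtaining smooth local solution arcs from the uniform contraction principle, and excluding secondary bifurcations via injectivity of the augmented Jacobian along the tube, which is exactly the paper's Corollary~\ref{corollary:Jinvertible}. The only ingredients you gloss over are the explicit regularity inequality of Theorem~\ref{th_reg} (needed so that $\frac{d\tilde U}{ds}\neq 0$, i.e.\ the validated arc is genuinely a smooth curve) and the argument of Theorem~\ref{th_rec} showing that consecutive validated segments connect smoothly, which the paper obtains from the $C^0$ piecewise linear approximation, uniqueness in overlapping balls and the implicit function theorem rather than from mere concatenation.
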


\begin{figure}[h]
\label{japonais}
\begin{center}
\includegraphics[width=10cm]{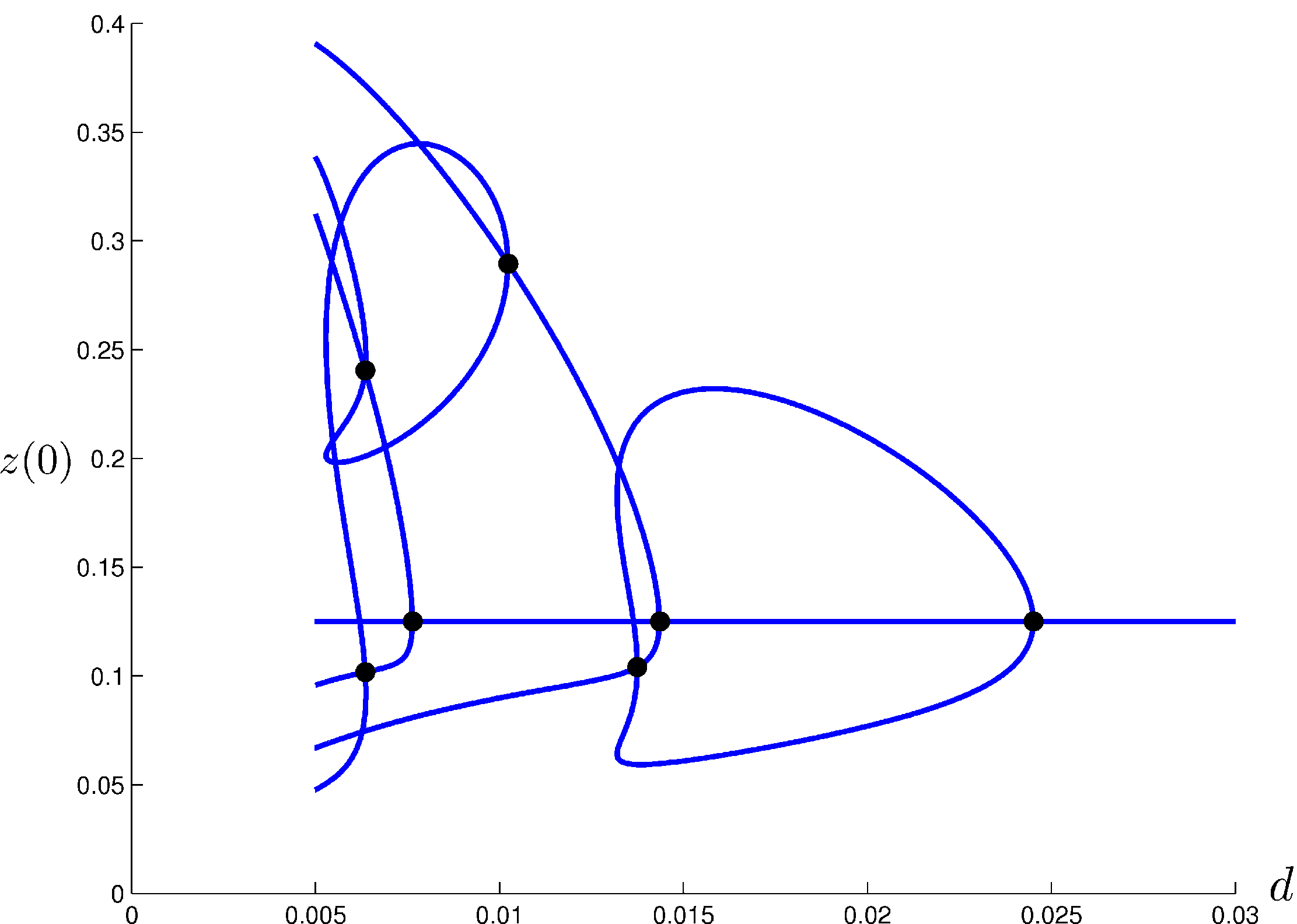}
\end{center}
\vspace{-.5cm}
\caption{\small The rigorously computed bifurcation diagram of Theorem~\ref{thm:bif_diagram}. Note that the apparent bifurcations which appear in black are not proved rigorously. The horizontal axis represents the diffusion parameter $d$ while the vertical axis represents the value $z(0)$ of the steady state $(x,y,z)$ of \eqref{eq:mimura_system}. The apparent intersections which are not denoted by black dots are not bifurcation points, e.g. see Figure~\ref{espace1} (c),(d) for a geometrical interpretation of apparently close solutions.
}
\end{figure}

Let us briefly introduce the ideas behind the rigorous method. First, the steady states $(x,y,z)$ defined on $[0,1]$ of \eqref{eq:mimura_system} with Neumann boundary conditions can be extended periodically on $[-1,1]$ and then expanded as Fourier series of the form $x(\xi) = \frac{x_0}{2} + \sum_{k \ge 1} x_k \cos ( k \pi \xi)$, $y(\xi) = \frac{y_0}{2} + \sum_{k \ge 1} y_k \cos ( k \pi \xi)$ and $z(\xi) = \frac{z_0}{2} + \sum_{k \ge 1} z_k \cos ( k \pi \xi)$. Plugging the expansions of $x$, $y$ and $z$ in \eqref{eq:mimura_system} and computing the Fourier coefficients of the resulting expansions yield the following infinite set of algebraic equations to be satisfied

\begin{eqnarray} \label{fourier}
      &&  \hspace{-.7cm} f_{n_x}(U) \bydef (r_1-d(\pi n)^2)x_n + \frac{1}{\varepsilon}y_n - a_1 [x^2]_n - a_1 [x\ast y]_n - (b_1+\frac{1}{\varepsilon N})[x\ast z]_n - \frac{1}{\varepsilon N}[y\ast z]_n =0, \nonumber \\ \nonumber
       &&  \hspace{-.7cm} f_{n_y}(U) \bydef ((r_1 - \frac{1}{\varepsilon} - (d+\beta N) (\pi n)^2 )y_n - a_1 [y^2]_n - a_1 [x\ast y]_n - (b_1-\frac{1}{\varepsilon N})[y\ast z]_n + \frac{1}{\varepsilon N}[x\ast z]_n=0, \\
       &&  \hspace{-.7cm} f_{n_z}(U) \bydef (r_2 - d (\pi n)^2 )z_n - a_2 [z^2]_n - b_2 [x\ast z]_n - b_2 [y\ast z]_n = 0,
\end{eqnarray}
where $n\ge 0$, $[\phi \ast \varphi]_n \bydef \frac{1}{2}\sum_{k\in\mathbb{Z}}\phi_{\vert k \vert} \varphi_{\vert n-k \vert}$ and where $\phi^2 = \phi \ast \phi$. Given $n \ge 0$, let $u_n \bydef (x_n,y_n,z_n)$ the $n$-th Fourier coefficients of $(x,y,z)$ and let $u \bydef (u_n)_{n \ge 0}$. Define $U=(d,u)$ and let $f_n(U) \bydef (f_{n_x}(U),f_{n_y}(U),f_{n_z}(U))$, where each component is defined by \eqref{fourier}. Finally define $f(U) = (f_n(U))_{n \ge 0}$. In order to compute steady states of  \eqref{eq:mimura_system}, we will be looking for solutions $U$ of $f(U)=0$ in a Banach space of fast decaying coefficients. Let us be more explicit about this. As in \cite{MR2630003}, we define weight functions ($q>1$)
\begin{equation}\label{e:weights}
  \omega^q_n = \left\{ 
  \begin{array}{ll}
  1, &  n =0; \\
  n^q, & n \geq 1, \\
  \end{array}
 \right.
\end{equation}
which are used to define, for $u=(u_n)_{n \ge 0}$ as defined above, the norm

\begin{equation}
\label{norme}
\left\Vert u \right\Vert_q =  \sup\limits_{n \in \mathbb{N}} \left\vert u_n \right\vert_{\infty} \omega_n^q,
\end{equation}
where $q>1$ is a decay rate and $ \left\vert u_n \right\vert_{\infty} = \max (\vert x_n \vert, \vert y_n \vert, \vert z_n \vert) $. Define
\begin{equation} \label{eq:norm_q}
\left\Vert U \right\Vert_q = \max \left\{ \frac{\vert d \vert}{\rho} , \left\Vert u \right\Vert_q \right\}
\end{equation}
where $\rho$ is a constant whose value will be chosen later, and
%
$\Omega_q = \left\lbrace U=(d,u) \ \vert \ \left\Vert U \right\Vert_q < \infty\right\rbrace$,
%
a Banach space with norm \eqref{eq:norm_q} of sequences decreasing to zero at least as fast as $n^{-q}$, as $n \rightarrow \infty$.

\begin{lem} \label{lemma:equivalence}
Fix a diffusion parameter $d$ and a decay rate $q >1$. Using the above construction, $U=(d,u) \in \Omega_{q}$ is a solution of $f(U)=0$ if and only if $(x,y,z)$ is a strong $C^2$-solution of the stationary Neumann problem of \eqref{eq:mimura_system}.
\end{lem}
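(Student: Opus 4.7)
The plan is to verify the correspondence in both directions between strong $C^2$ solutions of the stationary Neumann problem associated with \eqref{eq:mimura_system} and solutions $U = (d,u) \in \Omega_q$ of the algebraic system $f(U)=0$. The elementary bridge is the observation that a cosine expansion $\phi(\xi) = \phi_0/2 + \sum_{k\ge 1}\phi_k \cos(k\pi \xi)$ is exactly the Fourier series of the even, $2$-periodic extension of a function on $[0,1]$ that satisfies homogeneous Neumann boundary conditions, and that whenever $\phi,\psi$ are two such functions, the Fourier coefficients of the pointwise product $\phi\psi$ are given precisely by the discrete convolution $[\phi\ast\psi]_n$ used in \eqref{fourier}.

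For the direction $f(U)=0 \Rightarrow$ strong $C^2$ solution, I would start from $U = (d,u) \in \Omega_q$. The bound $|u_n|_\infty \le \|u\|_q/\omega_n^q$ together with convergence of $\sum n^{-q}$ (since $q>1$) yields absolute and uniform convergence of the cosine series, producing continuous functions $x,y,z$ on $[0,1]$. The key auxiliary fact I would invoke is a Banach algebra estimate $\|\phi\ast\psi\|_q \le C_q\|\phi\|_q\|\psi\|_q$, valid exactly because $q>1$, in the spirit of the analysis carried out in \cite{MR2630003}. I would then bootstrap using the equations themselves: rewriting $f_{n_x}(U)=0$ as
\begin{equation*}
d(\pi n)^2 x_n \;=\; r_1 x_n + \tfrac{1}{\varepsilon}y_n - a_1[x^2]_n - a_1[x\ast y]_n - \Bigl(b_1+\tfrac{1}{\varepsilon N}\Bigr)[x\ast z]_n - \tfrac{1}{\varepsilon N}[y\ast z]_n,
\end{equation*}
every term on the right-hand side is $O(n^{-q})$ by the algebra property, so $|x_n| = O(n^{-q-2})$. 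Iterating this argument, and treating the $y$ and $z$ equations identically, shows that $u_n$ decays faster than any polynomial, hence $x,y,z \in C^\infty([0,1])$. The cosine structure automatically produces $\partial_\xi x(0) = \partial_\xi x(1) = 0$ (and similarly for $y$ and $z$), and termwise identification of Fourier coefficients shows that the stationary PDE is satisfied pointwise.

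Conversely, if $(x,y,z)$ is a strong $C^2$ solution of the stationary Neumann problem, then its even $2$-periodic extension is at least $C^2$ on $\mathbb{R}$ (the matching of one-sided derivatives at the endpoints being ensured by the Neumann conditions together with evenness), so integration by parts twice yields $|x_n|,|y_n|,|z_n| = O(1/n^2)$. Since the nonlinearity is polynomial and the principal part is the Laplacian, a standard elliptic bootstrap upgrades the solution to $C^\infty$, hence the Fourier coefficients decay faster than any polynomial and $u \in \Omega_q$ for the given $q>1$. Substituting the cosine expansions into the stationary equations and identifying coefficients via the product/convolution formula then recovers $f_n(U)=0$ for every $n\ge 0$.

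The main technical obstacle is the forward bootstrap: one needs a convolution estimate in $\Omega_q$ that is already effective for $q$ barely larger than $1$, which is exactly the content of the Banach algebra property invoked above; without it, the recursive gain of two powers of $n$ from the Laplacian term cannot be iterated up to the $C^\infty$ regularity needed for a genuinely strong interpretation of \eqref{eq:mimura_system}. Once this estimate is in hand, the remaining steps (absolute convergence of the series, termwise differentiation, identification of Fourier coefficients, verification of the Neumann conditions from the cosine structure) are routine.
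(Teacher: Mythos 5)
Your argument is correct and follows essentially the same route as the paper: the forward direction uses the Banach algebra property of $(\Omega_q,\ast)$ to bootstrap the coefficient decay via division by $(\pi n)^2$ (exactly the argument of Section~\ref{sec:bootstrap}), and the converse uses a regularity bootstrap on the PDE to get $C^\infty$ smoothness and hence faster-than-algebraic Fourier decay. The extra details you supply (uniform convergence of the cosine series, the product/convolution identification, the Neumann conditions from the cosine structure) are just an expanded version of what the paper leaves implicit.
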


\begin{proof}

Assume that $U=(d,u) \in \Omega_q$ is a solution of $f(U)=0$. Since $\Omega_{q}$ is a Banach algebra for $q > 1$ (see Section~\ref{Estimes}), then one can use a bootstrap argument (e.g. like the one in Section~\ref{sec:bootstrap}) with the fact that $\Omega_q \subset \Omega_{q_0}$ for any $q_0 \in (1,q]$ to get that $U \in \Omega_{q_0}$, for every $q_0>1$. By construction of $f$ given component-wise by \eqref{fourier}, $(x,y,z)$ defined by the Fourier coefficients $(u_n)_{n\ge 0}=(x_n,y_n,z_n)_{n\ge 0}$ is then a strong $C^2$-solution of the stationary Neumann problem of \eqref{eq:mimura_system}. Now, if  $(x,y,z)$ is a strong $C^2$-solution of the stationary Neumann problem of \eqref{eq:mimura_system}, it is in fact, by a bootstrap argument on the PDE, a $C^\infty$-solution. Hence, the Fourier coefficients of $(x,y,z)$ decrease faster than any algebraic decay, which implies that $U \in \Omega_q$, for all $q>1$.
\end{proof}

Hence, based on the result of Lemma~\ref{lemma:equivalence}, we focus our attention on finding $U \in \Omega_q$ such that $f(U)=0$, for a fixed $q >1$. To find the zeros of $f$, the idea is the following. Find an approximate solution $\overline U \in \Omega_q$ of $f=0$, which is done by applying Newton's method on a finite dimensional projection of $f$. Then construct a nonlinear operator $T:\Omega_q \rightarrow \Omega_q$ satisfying two properties. First, it is defined so that the zeros of $f$ are in one-to-one correspondence with the fixed points of $T$, that is $f(U)=0$ if and only if $T(U)=U$. Second, it is constructed as a Newton-like operator around the numerical approximation $\overline U$. The final and most involved step is to look for the existence of a set $B \subset \Omega_q$ centered at $\overline U$ which contains a genuine zero of the nonlinear operator $f$. The idea to perform such task is to find $B \subset \Omega_q$ such that $T:B \rightarrow B$ is a contraction, and to use the contraction mapping theorem to conclude about the existence of a unique fixed point of $T$ within $B$. The method used to find $B$ is based on the notion of the radii polynomials, which provide an efficient means of finding a set on which the contraction mapping can be applied \cite{MR2338393}. We refer to Section~\ref{sec:uniform_contraction} for the definition of the radii polynomials and to Section~\ref{I} for their explicit construction. The polynomials are used to find (if possible) an $r>0$ such that $T$ is a contraction on the closed ball $B(\overline U,r)$ of radius $r$ and centered at $\overline U$ in $\Omega_q$, for $\left\Vert\cdot\right\Vert_q$. With this norm, the closed ball is given by
\begin{equation} \label{eq:def_ball}
B(U,r)= U + \left[-\frac{r}{\rho},\frac{r}{\rho}\right] \times \prod\limits_{n\in\mathbb{N}}\left[-\frac{r}{\omega_n^q},\frac{r}{\omega_n^q}\right]^3.
\end{equation} 

The above scheme to enclose uniquely and locally zeros of $f$ can be extended to find smooth solution paths $\{\tilde U(s)\}_{s \in I}$ such that $f(\tilde U(s))=0$ for all $s \in I$ in some interval $I$. The idea is to construct radii polynomials defined in terms of both $r$ and $s$ and to apply the uniform contraction principle. With this construction, it is possible to prove existence of smooth global solution curves of $f=0$. See Section~\ref{sec:uniform_contraction} for more details.

Before proceeding further, it is worth mentioning that the method proposed in the present work is strongly influenced by the method based on the radii polynomials introduced in \cite{MR2338393} and the rigorous branch following method of \cite{MR2630003}. There are however some differences. First, prior to the present work, the method based on the radii polynomials has never been applied to systems of reaction-diffusion PDEs. Second, new convolution estimates are introduced in Section~\ref{Estimes} for a new range of decay rates, that is for $q \in (1,2)$. The importance of these new estimates is that they can improve the success rate of the proofs while reduce significantly the computational time. Indeed, it is demonstrated in Section~\ref{sec:q} that using $q < 2$ can greatly improve the efficiency of the rigorous method based on the radii polynomials. Also in Section~\ref{sec:q}, a detailed analysis of the optimal choice of the decay rate parameter $q>1$ is made, where the goal is to maximize the chances of performing successfully the computational proofs. Note that prior to the present work, the method based on the radii polynomials has only used decay rates $q \ge 2$. Third, the method here is slightly different from the approach of \cite{MR2630003} for the construction of global branches in the sense that it is a {\em gluing-free} approach. More precisely, it means that no extra work has to be made to glue together adjacent small pieces of smooth curves (see Theorem~\ref{th_rec}). This is due to the fact that here, a global $C^0$ piecewise linear numerical approximation of the curve is constructed, while in \cite{MR2630003}, the global representation of the curve is piecewise linear but not $C^0$.

Let us present a consequence of Theorem~\ref{thm:bif_diagram}. This result could be hard to prove with a purely analytic approach. Its proof is presented in Section~\ref{sec:proofs} and see Figure~\ref{fig10} for a representation.

\begin{corollary}[\bf Co-existence of non constant steady states] \label{corollary:co-existence}
Consider 3-component reaction-diffusion system of PDEs \eqref{eq:mimura_system} with the fixed diffusion parameter $d=0.006$. Then \eqref{eq:mimura_system} has at least eleven distinct co-existing steady states with ten of them being non constant.  
\end{corollary}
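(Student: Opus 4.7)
The plan is to specialize Theorem~\ref{thm:bif_diagram} to the vertical slice $\{d = 0.006\}$ in Figure~\ref{japonais} and count the branches of steady states crossing it. Because the rigorous construction behind Theorem~\ref{thm:bif_diagram} produces, for each smooth branch, an explicit interval of validated parameter values together with a tube of enclosures in $\Omega_q$ of the form \eqref{eq:def_ball}, the proof reduces to a finite inspection of the output of the computer-assisted proof.

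First, I would verify that $d = 0.006$ lies strictly outside every small neighborhood of a bifurcation point (the black dots of Figure~\ref{japonais}) excluded in the statement of Theorem~\ref{thm:bif_diagram}. This amounts to a direct comparison of $0.006$ against the finite list of validated $d$-intervals produced by the algorithm.

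Second, with the slice now inside the region on which Theorem~\ref{thm:bif_diagram} applies, I would enumerate the branches meeting $\{d = 0.006\}$. The trivial branch $(x,y,z) \equiv (91/64, 13/64, 1/8)$ is present for every $d$ and contributes one constant steady state. From Figure~\ref{japonais} and the rigorous tabulation underlying it, ten additional non-trivial branches cross $d = 0.006$, and Theorem~\ref{thm:bif_diagram} guarantees that each such crossing corresponds to exactly one genuine steady state of \eqref{eq:mimura_system}.

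Third, I would establish that the resulting eleven steady states are pairwise distinct. For most pairs, the values $z(0)$ read off from Figure~\ref{japonais} already differ and distinctness is immediate. For pairs whose projections onto the $(d, z(0))$-plane are visually coincident or nearly so --- exactly the phenomenon warned about in the caption of Figure~\ref{japonais} --- I would lift the comparison into $\Omega_q$: since each branch is rigorously enclosed in a tube \eqref{eq:def_ball}, one may separate the two solutions by exhibiting a Fourier coordinate (for instance $x_0$, $y_0$, or a low-index mode $x_k$) on which the two enclosures are disjoint at $d = 0.006$. The main obstacle I expect is precisely this pairwise distinctness check for branches whose $(d, z(0))$-projections overlap in Figure~\ref{japonais}: for those, one cannot simply read the answer off the picture and must instead compare the rigorous enclosures coordinate by coordinate in $\Omega_q$ until a separating mode is found.
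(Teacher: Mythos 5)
Your proposal is correct and follows essentially the same route as the paper: restrict to the slice $d=0.006$, count the eleven validated branch portions meeting it (one being the constant state, ten non constant), and use the rigorous tubes \eqref{eq:def_ball} to conclude that these are genuine, pairwise distinct steady states. The only places where the paper is more explicit are (i) the attainment of $d=0.006$ exactly, which is obtained by the intermediate value theorem applied to the $d$-component $\tilde U_d(s)$ of a validated portion (or of several consecutive portions when $0.006$ is too close to an endpoint), using the margins $\overline d_{in}\pm \frac{r}{\rho}$ and $\overline d_{f}\pm \frac{r}{\rho}$, rather than a direct appeal to Theorem~\ref{thm:bif_diagram}; and (ii) distinctness, for which no coordinate-by-coordinate separation in $\Omega_q$ is needed: the rigorous error on the plotted quantity $z(0)$ is $\varepsilon_r = r\left(\frac{1}{2}+\sum_{n\geq 1} n^{-q}\right)<10^{-4}$, which already separates the eleven values of $z(0)$ at $d=0.006$.
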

\begin{figure} [H]
\begin{center}
\includegraphics[width=8cm]{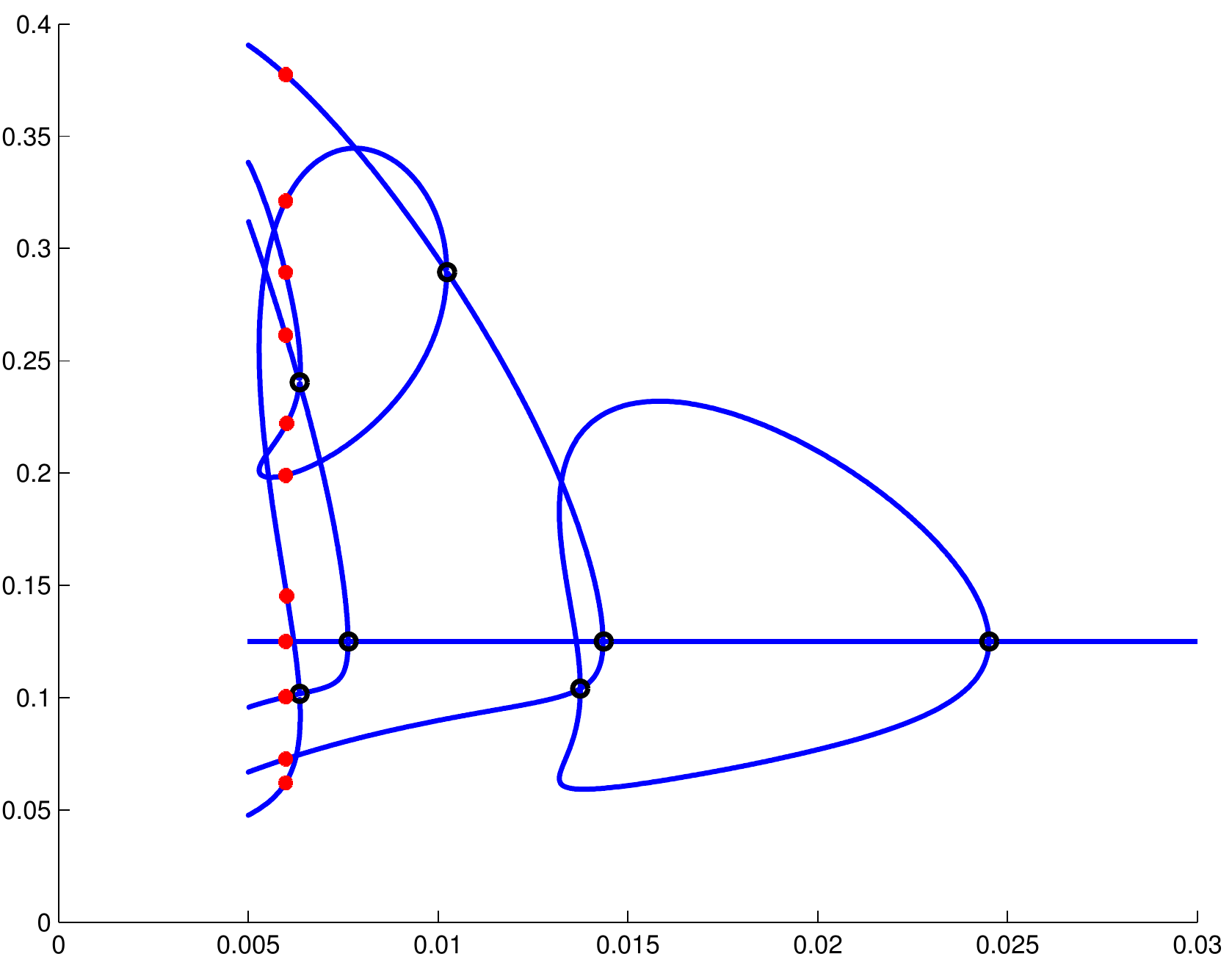}
\end{center}
\vspace{-.5cm}
\caption{\small Geometrical interpretation of Corollary~\ref{corollary:co-existence}. In red, eleven co-existing steady states of \eqref{eq:mimura_system} at the parameter value $d=0.006$. Ten of these solutions are non constant.}
\label{fig10}
\end{figure}
%
%

The following result may be a step toward rigorously studying steady states of the cross-diffusion model \eqref{eq:mimura_cross_diffusion_system}. Its proof is omitted since similar to the proof of Theorem~\ref{thm:bif_diagram}.

\begin{theorem}[\bf Rigorous computations of approximations for a cross-diffusion model] \label{thm:cross_diffusion}
Fix $d=0.02$. For each $\varepsilon \in \{10^{-2}, 10^{-3}, 10^{-4}, 10^{-5} \}$, there exist a non-trivial steady state solution $(x_\eps,y_\eps,z_\eps)$ of the reaction-diffusion system \eqref{eq:mimura_system}. See Figure~\ref{epsilon} for a representation.
\end{theorem}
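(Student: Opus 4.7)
The plan is to reuse, for each of the four fixed values of $\eps$, the single-point (non-branch) version of the radii polynomial argument that underlies Theorem~\ref{thm:bif_diagram}. With $d=0.02$ held fixed, the system $f(U)=0$ given by \eqref{fourier} is now a map on $\Omega_q$ restricted to $d=0.02$, and Lemma~\ref{lemma:equivalence} still tells us that any zero in $\Omega_q$ corresponds to a strong $C^2$ Neumann steady state of \eqref{eq:mimura_system}. For each $\eps$ I would first produce a good numerical approximation $\overline U_\eps$ by applying Newton's method to the Galerkin projection $f^{(m)}$ of $f$ onto the first $m$ Fourier modes. A natural way to seed the Newton iteration is a continuation loop $\eps = 10^{-2}\to 10^{-3}\to 10^{-4}\to 10^{-5}$, using the solution found at the previous value of $\eps$ (and, for $\eps = 10^{-2}$, the profile read off Figure~\ref{japonais} near $d=0.02$) as the initial guess.

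Next I would form the Newton-like operator $T_\eps(U) = U - A\,f(U)$, where $A$ is a finite-rank approximate inverse of $Df(\overline U_\eps)$ on the active modes, completed by the explicit diagonal tail $(r_i - d(\pi n)^2)^{-1}$ from the linear part of \eqref{fourier}. Following exactly the construction recalled between \eqref{fourier} and \eqref{eq:def_ball}, I would assemble the radii polynomials $p_0(r),\dots,p_M(r)$ from the bounds on $\|T_\eps(\overline U_\eps)-\overline U_\eps\|_q$ and on the Lipschitz constant of $DT_\eps$ on $B(\overline U_\eps,r)$, and look for an interval of radii where all $p_k$ are negative. The contraction mapping theorem then yields a unique zero of $f$ inside $B(\overline U_\eps,r)$, which is the desired $(x_\eps,y_\eps,z_\eps)$. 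Non-triviality follows automatically: $\overline U_\eps$ is numerically far from the constant state $(91/64,13/64,1/8)$, and the radius $r$ will be orders of magnitude smaller than that distance.

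The main obstacle is the explicit $\eps^{-1}$ and $(\eps N)^{-1}$ prefactors in \eqref{fourier}. These enter linearly into the bounds used to build the radii polynomials: the residual $\|f(\overline U_\eps)\|_q$, the off-diagonal entries of $Df(\overline U_\eps)$, and the tail bounds on the convolutions $[x\ast z]_n$, $[y\ast z]_n$ all pick up a factor $\eps^{-1}$. Unless compensated, this destroys the contraction for the smaller values of $\eps$. I therefore expect to need, for each decreasing $\eps$, (i) a substantially larger truncation dimension $m$, since the Turing patterns develop sharper internal layers as $\eps\downarrow 0$ and the tail of $\overline U_\eps$ decays more slowly in practice; (ii) a careful re-optimization of the decay parameter $q$ along the lines of Section~\ref{sec:q}, pushing $q$ high enough to tame the convolution constants multiplying $\eps^{-1}$ but not so high as to ruin the a priori fit of $\overline U_\eps$ in $\Omega_q$; and (iii) tighter interval-arithmetic enclosures when computing $A$ and $f(\overline U_\eps)$, so that the numerical residual itself does not dominate the $\eps^{-1}$-inflated linear terms.

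Once these parameters are tuned for each $\eps\in\{10^{-2},10^{-3},10^{-4},10^{-5}\}$, the same polynomial-positivity check that validates every point on the curves of Figure~\ref{japonais} yields, for that value of $\eps$, a certified non-trivial steady state $(x_\eps,y_\eps,z_\eps)$ of \eqref{eq:mimura_system}; concatenating the four outputs produces Figure~\ref{epsilon} and proves the theorem.
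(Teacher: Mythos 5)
Your overall strategy is exactly the paper's: the proof of Theorem~\ref{thm:cross_diffusion} is the single-point (discrete-parameter) version of the radii-polynomial argument behind Theorem~\ref{thm:bif_diagram}, with $d=0.02$ fixed and the approximate solutions generated by a continuation loop in $\varepsilon$ starting from the point of the diagram of Figure~\ref{japonais} at $\varepsilon=10^{-2}$, so in that sense the proposal matches the (omitted) proof.

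One concrete point in your construction should be corrected, and it also dissolves most of the $\varepsilon^{-1}$ difficulties you anticipate. You propose completing the finite-rank approximate inverse by the \emph{diagonal} tail $(r_i-d(\pi n)^2)^{-1}$. The paper instead takes for the tail the inverse $J_n$ of the \emph{full} $3\times 3$ linear block of $f_n(d,\cdot)$, which is upper triangular with the stiff coupling $\frac{1}{\varepsilon}$ in the $(1,2)$ slot and $r_1-\frac{1}{\varepsilon}-(d+\beta N)(\pi n)^2$ in the $(2,2)$ slot; see \eqref{Jn}. This matters: with a purely diagonal tail the linear term $\frac{1}{\varepsilon}y_n$ in $f_{n_x}$ is not cancelled by $J_nD_uL_n$, so every tail bound $Z_n$ picks up a contribution of order $\frac{1}{\varepsilon\,\vert r_1-d(\pi n)^2\vert}$, which exceeds $1$ for all $n\lesssim(\varepsilon d\pi^2)^{-1/2}$ (roughly $n\lesssim 700$ at $\varepsilon=10^{-5}$, $d=0.02$), and the last radii polynomial cannot be made negative unless $m$ is pushed at least that far. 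With the block inverse \eqref{Jn} this term disappears identically, and the explicit formula shows the $\varepsilon^{-1}$ factors are benign on the linear level: the $(1,2)$ entry of $J_n$ is $O\bigl((d(\pi n)^2)^{-1}\bigr)$ uniformly in $\varepsilon$, and the $(2,2)$ entry is $O(\varepsilon)$, which moreover compensates the $\frac{1}{\varepsilon N}$ coefficients of the quadratic terms in the $y$-row. What genuinely survives as $\varepsilon\searrow 0$ is the growth of $\lambda_1=4a_1+2b_1+\frac{4}{\varepsilon N}$ in the $x$-row of the tail bounds (Section~\ref{sec:Zn_n_ge_M}), which is absorbed only through the decay $\vert J_M\vert\sim(d(\pi M)^2)^{-1}$ and hence does force a larger truncation $m$ for small $\varepsilon$ — that part of your point (i) is right. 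Your point (ii) is stated backwards relative to the paper's analysis: Section~\ref{sec:q} argues that decay rates $q\in(1,2)$, not larger $q$, are what reduce $\left\Vert \overline u\right\Vert_q$ (and hence $m$) when high frequencies matter; increasing $q$ tends to increase $\left\Vert \overline u\right\Vert_q$ faster than it decreases $\alpha_M^q$. With the tail operator taken as in \eqref{Jn}, the rest of your plan (Newton on the Galerkin projection, radii polynomials, contraction mapping, and non-triviality from $r$ being far smaller than the distance of $\overline U_\varepsilon$ to the constant state) is the paper's argument.
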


\begin{figure} [H]
\begin{center}
\subfigure[Diagram of Theorem~\ref{thm:bif_diagram} with point ${\bf 1}$ (in red) corresponding to the starting point of the continuation done in the proof of Theorem~\ref{thm:cross_diffusion} as $\varepsilon \searrow 0$.]{\includegraphics[width=7cm]{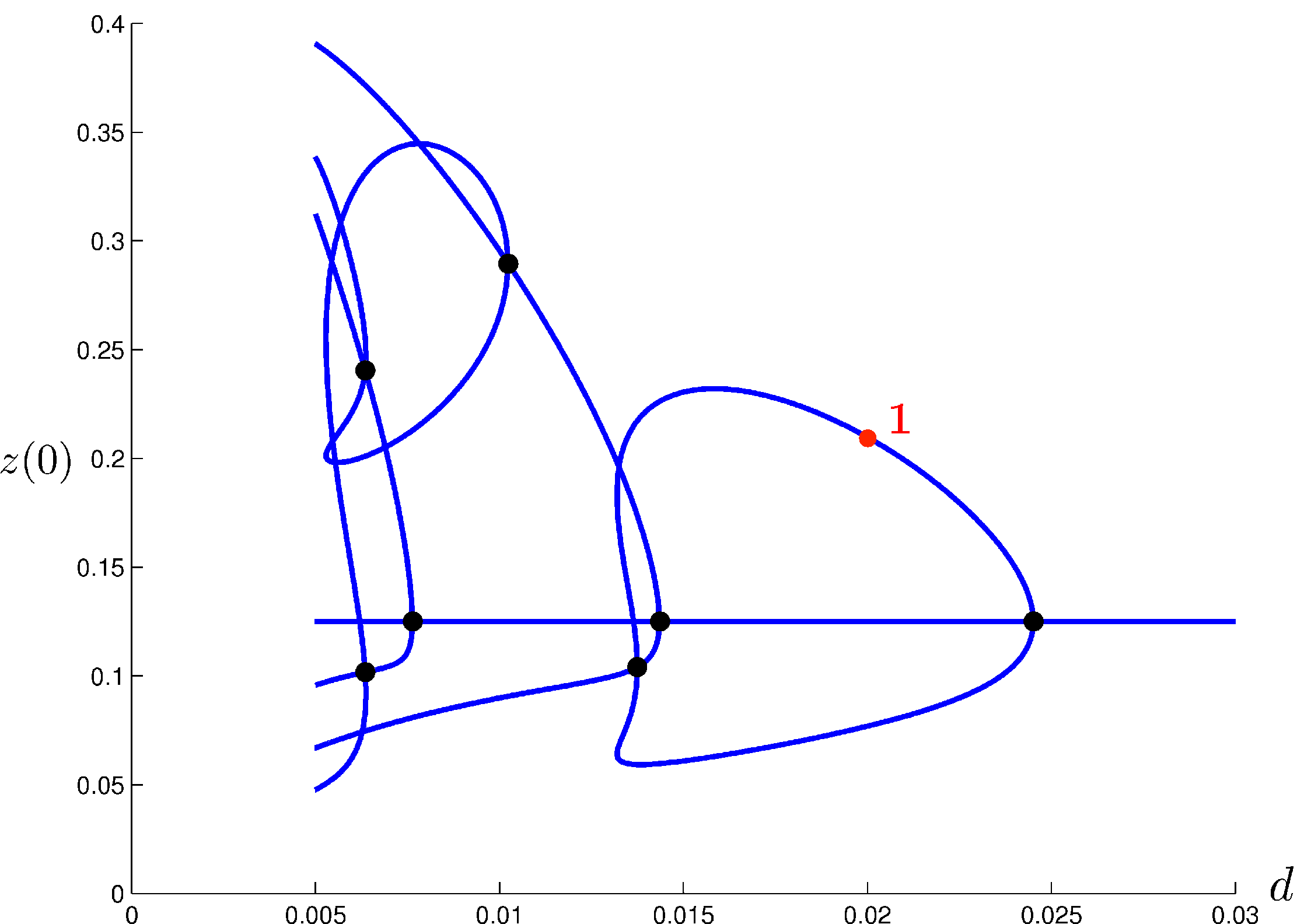}} 
\subfigure[$(u_1,u_2)$ at $\varepsilon=10^{-2}$ corresponding to the red point on the bifurcation diagram on the left. ]{\includegraphics[width=5cm]{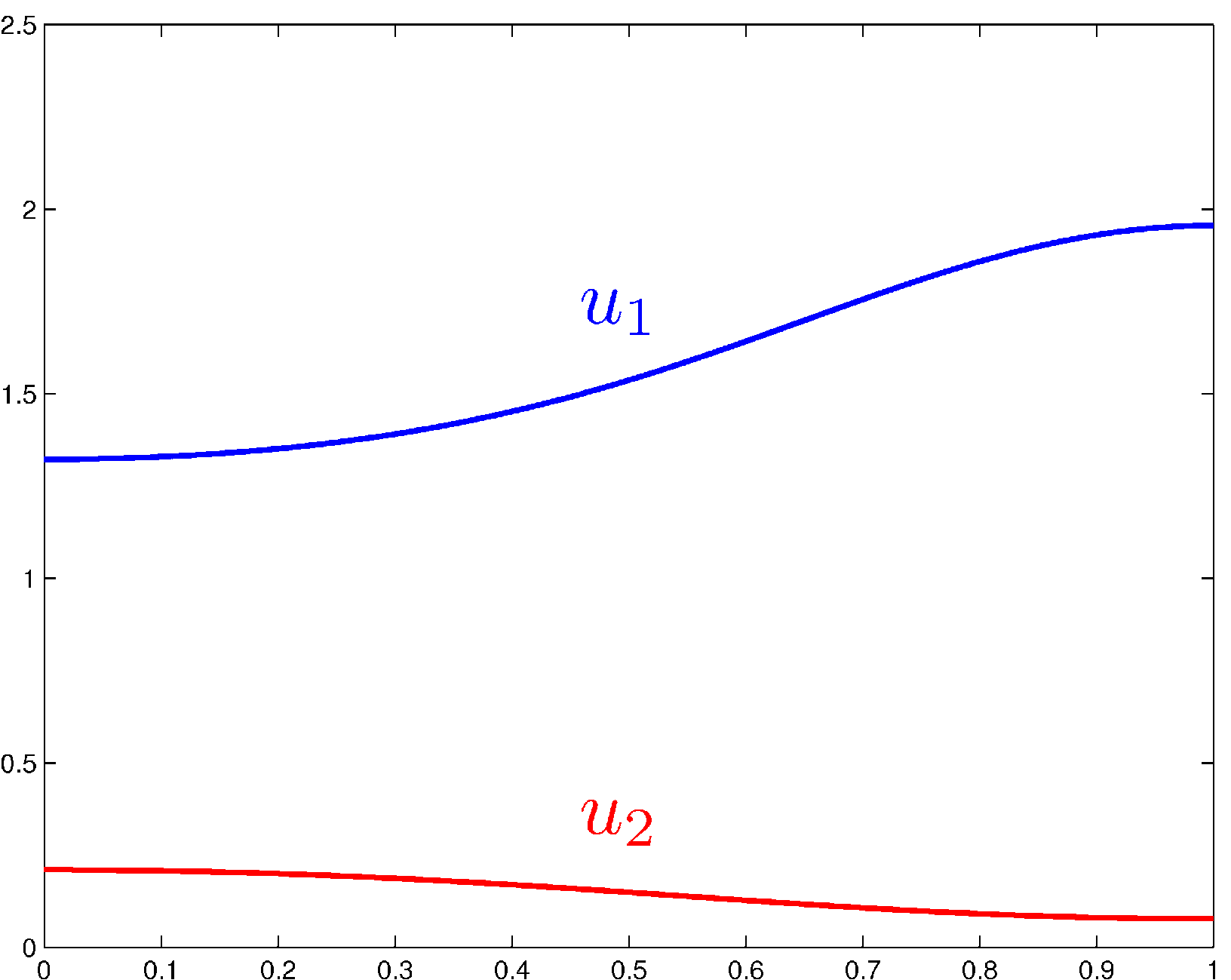}} \\
\subfigure[$(u_1,u_2)$ at $\varepsilon=10^{-3}$]{\includegraphics[width=4cm]{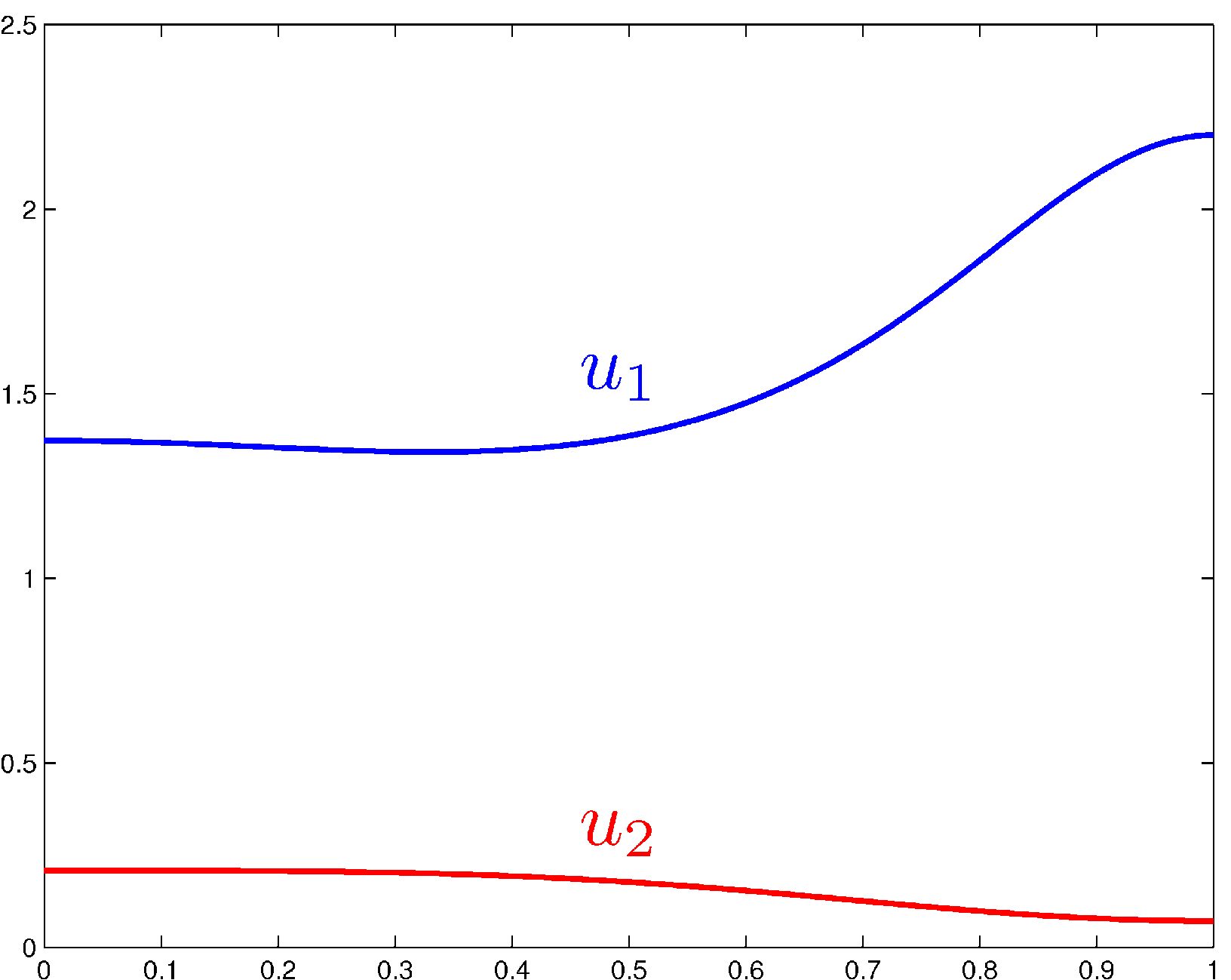}}
\subfigure[$(u_1,u_2)$ at $\varepsilon=10^{-4}$]{\includegraphics[width=4cm]{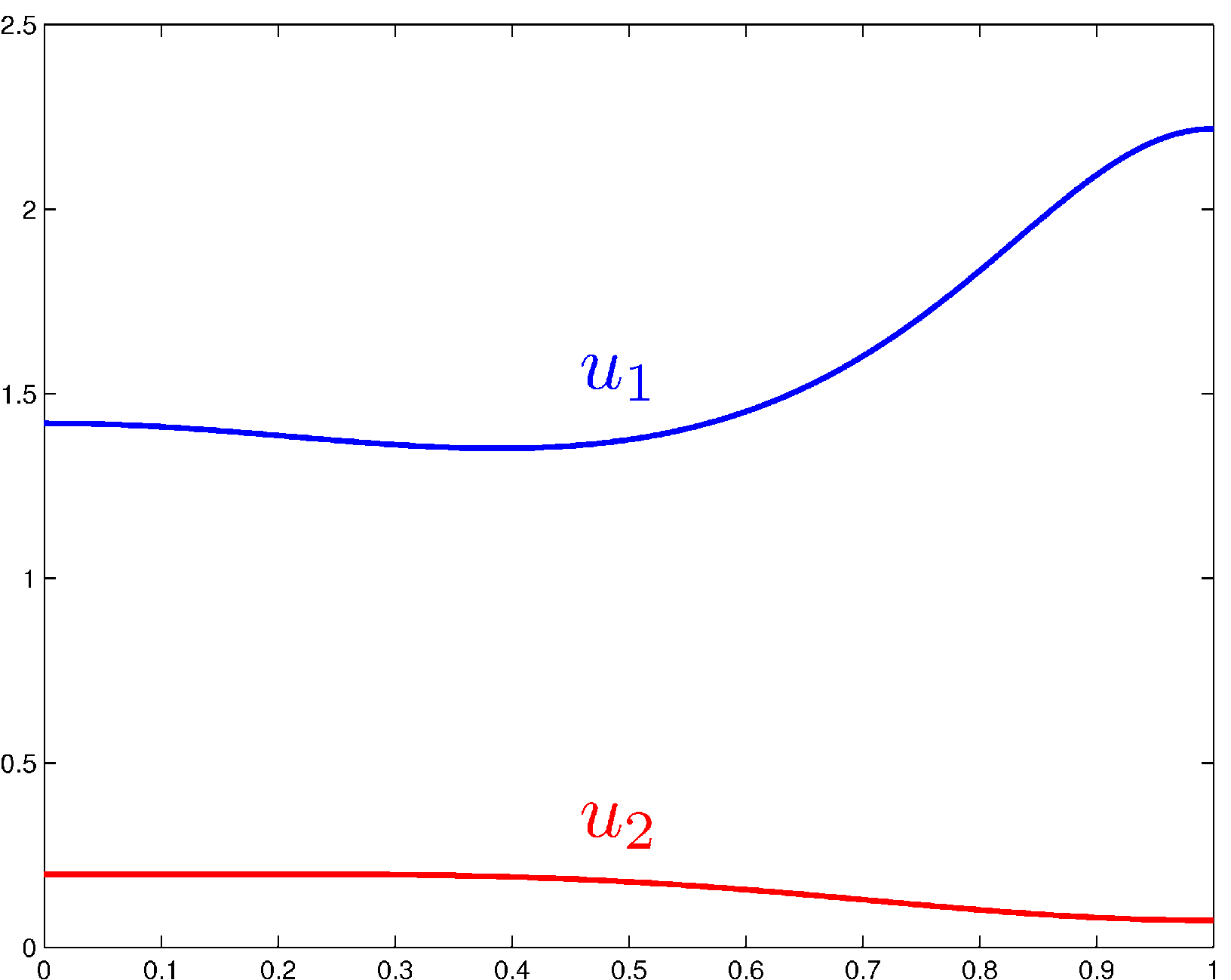}}
\subfigure[\label{10^-5} $(u_1,u_2)$ at $\varepsilon=10^{-5}$]{\includegraphics[width=4cm]{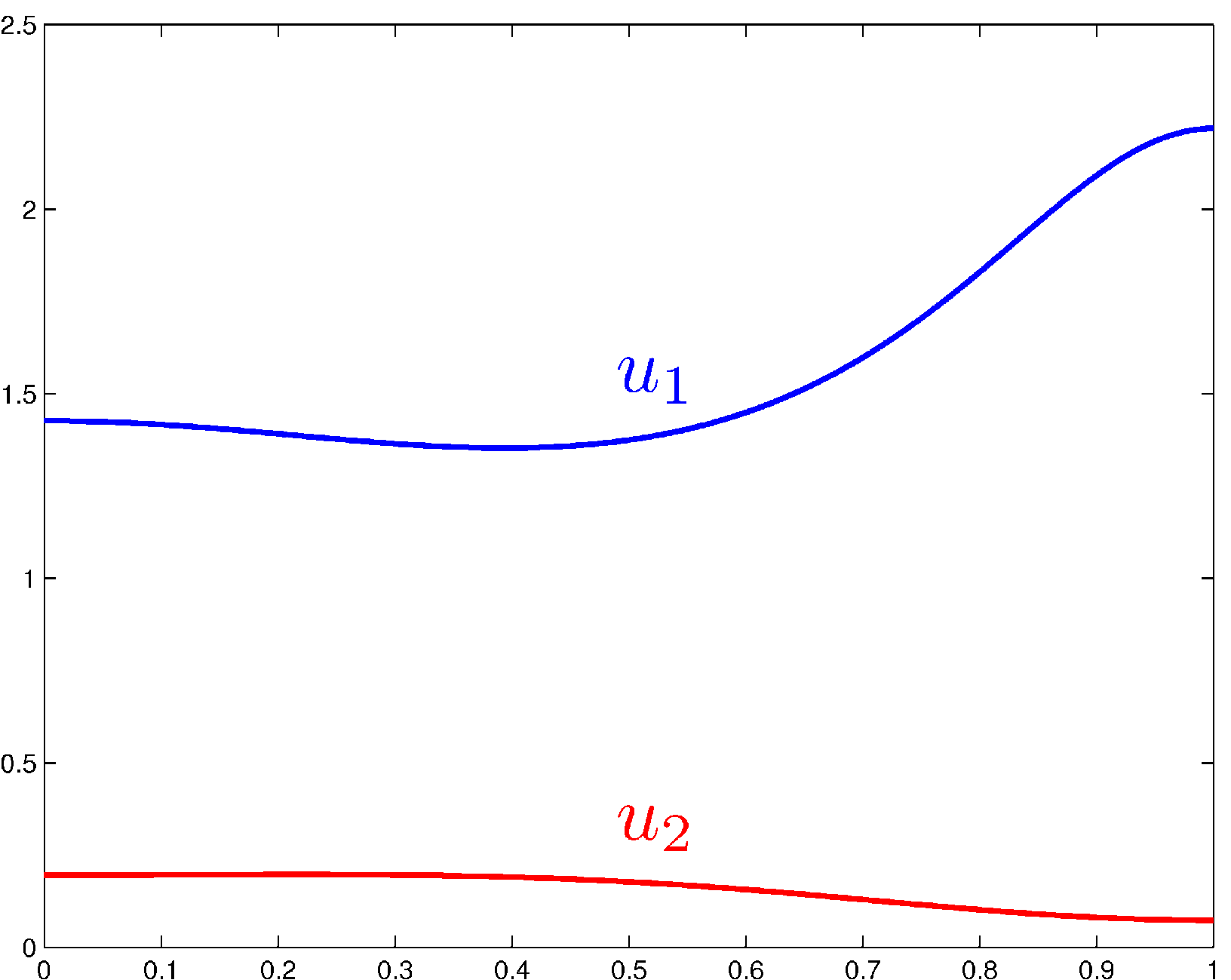}}
\end{center}
\vspace{-.5cm}
\caption{\small Corresponding to the rigorously computed steady states $(x_\eps,y_\eps,z_\eps)$ of \eqref{eq:mimura_system} from Theorem~\ref{thm:cross_diffusion}, we define $u_1(\eps) = x_\eps+y_\eps$ and $u_2(\eps) = z_\eps$. There is apparent convergence as $\varepsilon$ approaches zero.}
\label{epsilon}
\end{figure}

The solutions in Figure~\ref{epsilon} are rigorously computed using the method presented in this paper. In fact the computation uses a simpler version of the method because the proofs of existence are only done at discrete values of the parameters. As can be seen in Table~\ref{table:esp_convergence}, there is apparent convergence as $\varepsilon$ goes to zero. The $\left\Vert \cdot \right\Vert_{\infty}$ bounds there are rigorous because we control through $r=10^{-8}$ (the radius of the ball \eqref{eq:def_ball}) the error we have made with the numerical approximations. According to the work of \cite{MR2437576}, the solution on Figure~\ref{10^-5} given by $(u_1(\eps),u_2(\eps))=(x_\eps+y_\eps,z_\eps)$ when $\eps = 10^{-5}$ should be close to a solution of the cross-diffusion system \eqref{eq:mimura_cross_diffusion_system}. 
\begin{table}[h]
\begin{center}
\begin{tabular}{|c|c|c|}
\hline $\varepsilon_1$ & $\varepsilon_2$ & $\max\left(\left\Vert 
u_1(\eps_1) - u_1(\eps_2)
\right\Vert_{\infty}, \left\Vert 
u_2(\eps_1) - u_2(\eps_2)
\right\Vert_{\infty} \right)$ \\ 
\hline $10^{-2}$ & $10^{-3}$ & 0.2918 \\ 
\hline $10^{-3}$ & $10^{-4}$ & 0.0757 \\ 
\hline $10^{-4}$ & $10^{-5}$ & 0.0092 \\ 
\hline 
\end{tabular}  
\end{center}
\vspace{-.5cm}
\caption{\small Apparent convergence of the rigorously computed steady states of the 3-component reaction-diffusion system \eqref{eq:mimura_system} toward a steady state of the  cross-diffusion system \eqref{eq:mimura_cross_diffusion_system}.}
\label{table:esp_convergence}
\end{table}

The paper is organized as follows. In Section~\ref{method}, the general method is introduced. In Section~\ref{sec:applic}, the method is applied to the problem of computing rigorously steady states of the 3-component reaction-diffusion system \eqref{eq:mimura_system}, where the radii polynomials are explicitly constructed in this context. In Section~\ref{sec:parameter_opt}, a detailed analysis of the optimal choice of the parameters $\Delta_s$, $m$ and $q$ is made, with the goal of maximizing the chances of performing successfully the computational proofs. Finally, in Section~\ref{sec:proofs}, the proofs of Theorem~\ref{thm:bif_diagram} and Corollary~\ref{corollary:co-existence} are presented.

\section{Description of the general method}
\label{method}

In this section, we present the general method, leaving some technical details to Section~\ref{sec:applic}, where all the computations and estimates are presented explicitly to compute several global smooth branches of steady states of \eqref{eq:mimura_system}. The attention is now focused on describing a general method to prove existence and compute global smooth solution curves of $f(U)=0$ in a Banach space $\Omega_q$ of fast decaying Fourier coefficients. The method is based on the radii polynomials, first introduced in \cite{MR2338393}, and is strongly influenced by the rigorous branch following method of \cite{MR2630003}. The idea is to compute a set of numerical approximations $\{\overline U_0, \dots, \overline U_j \}$ of $f=0$ by considering a finite dimensional projection, to use the approximations to construct a global continuous curve of piecewise linear interpolations between the $\overline U_i$'s (see Figure~\ref{fig:piecewise_linear_approx}) and to apply the uniform contraction principle on {\em tubes} centered at each segment to conclude about the existence of a unique smooth solution curve of $f=0$ nearby the piecewise linear curve of approximations. The approximate curve is computed using pseudo-arclength continuation (e.g. \cite{MR910499}).

\subsection{Construction of a piecewise linear curve of approximations} \label{sec:pred-cor}

To construct a piecewise linear curve of approximations of $f=0$, we consider a finite dimensional projection $f^{[m]}$ of $f$ whose dimension depends on $m$ (see Section~\ref{sec:finite_dim_projection}). In what follows, $(\cdot)^{[m]}$ denotes considering this finite dimensional projection. Reversely, when we have some finite dimensional vector $U^{[m]}$, $U$ denotes the infinite vector obtained by completing $U^{[m]}$ with zeros. 

Suppose we have an approximate zero $\overline u_0^{[m]}$ of $f^{[m]}(d_0,\cdot)$ at $d_0$. Then, given $U_0^{[m]} \bydef  \left(d_0,u_0^{[m]}\right)$, we compute an approximate tangent vector $\dot U_0^{[m]}$, that is 
$Df^{[m]}\left(\overline U_0^{[m]}\right)\left(\dot U_0^{[m]}\right) \approx 0.$
Consider 
\begin{equation} \label{eq:predictor}
\hat U_0^{[m]}  \bydef  \overline U_0^{[m]} + \Delta_s \dot U_0^{[m]},
\end{equation}
 a predictor, where $\Delta_s$ is a parameter (whose value, representing roughly the arc length of curve we are covering in one step, is discussed in Section~\ref{opt}). Consider also the plane $\Pi$ whose equation is given by $E(U) \bydef \left(U-\hat U_0\right)\cdot \dot U_0=0$.  The pseudo-arclength operator is
\[
F:U\mapsto \begin{pmatrix}  E(U)\\f(U)  \end{pmatrix} ,
\]
and using Newton's method with initial point $\hat U_0^{[m]} $, we compute $U_1^{[m]}$ such that $F^{[m]}(U_1^{[m]}) \approx 0$. We refer to Figure~\ref{fig:pred_corr} for a geometrical representation of a pseudo-arclength continuation step. A next predictor-corrector step can be performed starting at $U_1^{[m]}$, and so on.

\begin{figure}[h]
\begin{center}
\includegraphics[width=7cm]{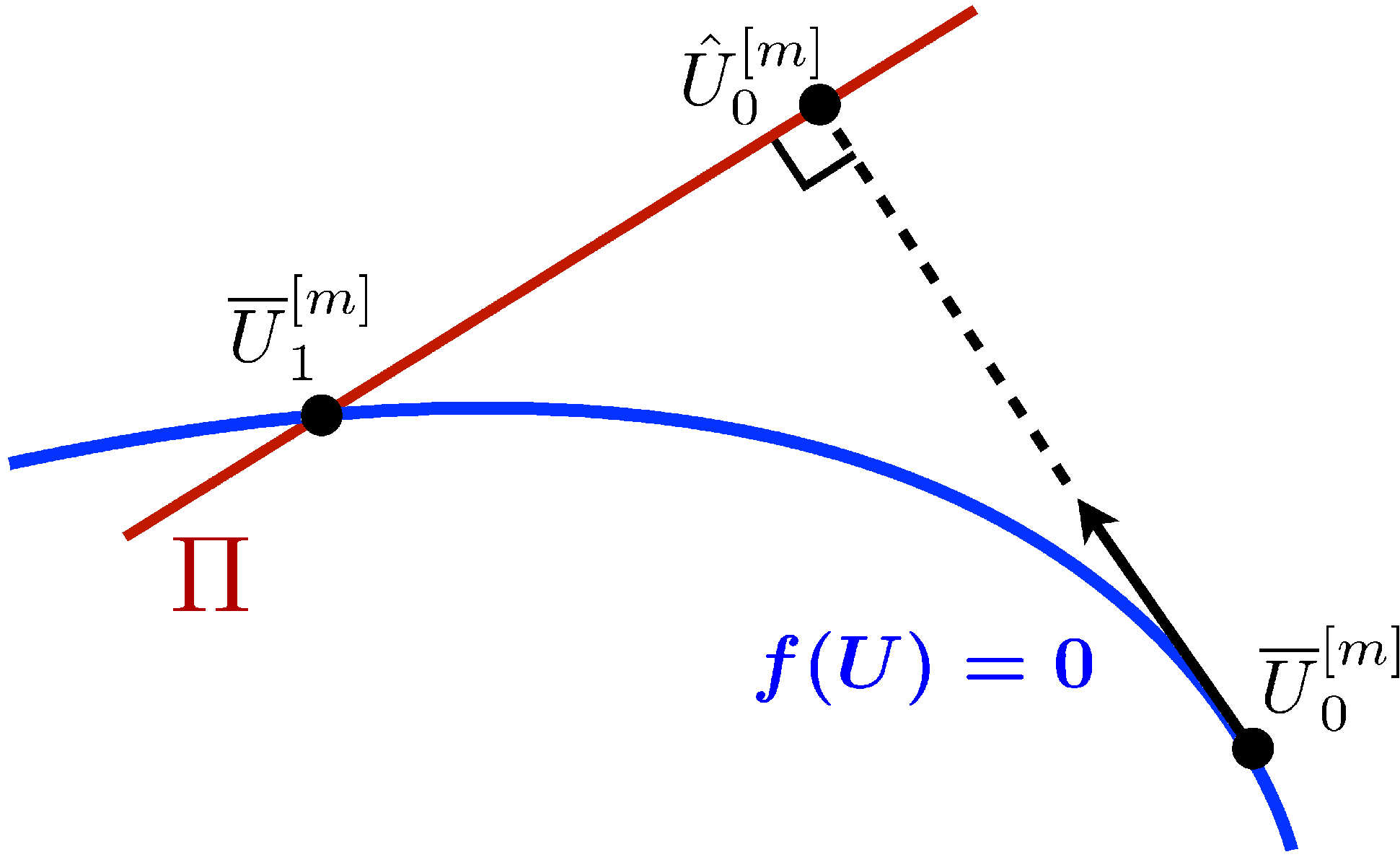}
\end{center}
\vspace{-.6cm}
\caption{\small A predictor-corrector step with pseudo-arclength continuation.}
\label{fig:pred_corr}
\end{figure}

Applying the predictor-corrector step $j$ times, we compute a set $\{\overline U_0,\dots, \overline U_j\}$ of approximations that defines a piecewise linear approximation curve (see Figure~\ref{fig:piecewise_linear_approx}). The next step is to show existence of a unique smooth solution curve $\cC$ of $f=0$ nearby the piecewise linear curve of approximations, as portrayed in Figure~\ref{fig:piecewise_linear_approx}. This task is twofold. First, one shows the existence of a unique portion of solution curve $\mathcal{C}^{(i)}$ in a small tube centered at the segment $[\overline  U_i,\overline  U_{i+1}]$. This is done in Theorem~\ref{th_local} by showing that a Newton-like operator $\tilde T$ is a uniform contraction on the tube. To verify the hypothesis of the uniform contraction principle, Theorem~\ref{th_pol} is introduced. This requires the construction of some bounds, which are presented in Section~\ref{sec:def_bornes}. In practice, verifying the hypothesis of Theorem~\ref{th_pol} is done via Lemma~\ref{nb_fini} by using the radii polynomials which are presented in Section~\ref{sec:def_pol}. From Lemma~\ref{nb_fini}, one sees that the strength of the radii polynomials is that they provide an efficient means (in the form of a finite number of polynomial inequalities to be checked rigorously on a computer using interval arithmetic) of finding a set on which $\tilde T$ is a uniform contraction. Second, one shows that each $\mathcal{C}^{(i)}$ is smooth, and that  
\[
\mathcal{C} \bydef \bigcup_{i=0}^{j-1} \mathcal{C}^{(i)}
\]
is a global smooth solution curve of $f=0$. In Section~\ref{sec:recollement}, we show how the smoothness of $\mathcal{C}^{(i)}$ can be proved by verifying the hypothesis of Theorem~\ref{th_reg}. Afterward, we show that if the hypotheses of Theorem~\ref{th_reg} and Theorem~\ref{th_rec} are satisfied, then $\mathcal{C}^{(i)}$ and $\mathcal{C}^{(i+1)}$ connect smoothly. The smoothness of the global solution curve $\mathcal{C}$ follows by construction.

\begin{figure}[h]
\begin{center}
\includegraphics[width=12cm]{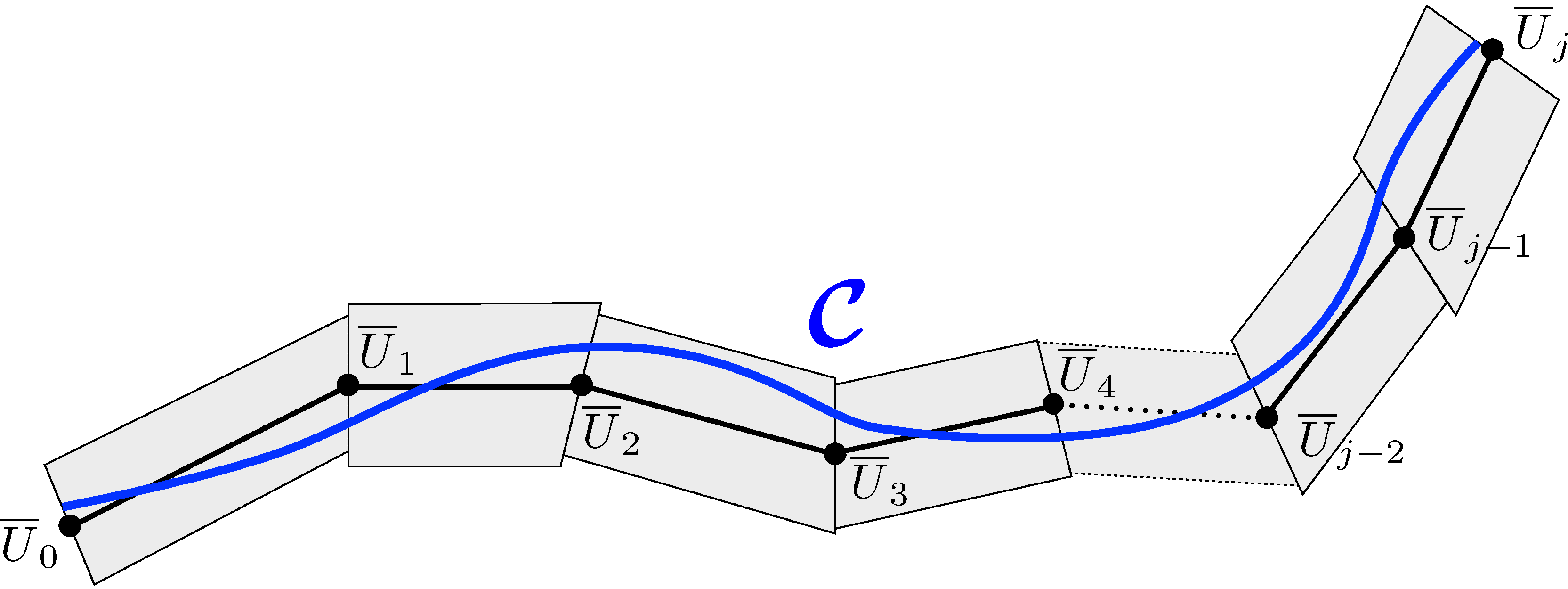}
\end{center}
\vspace{-.5cm}
\caption{\small Piecewise linear curve approximation (in black) constructed using pseudo-arclength continuation and existence of a global smooth solution curve $\mathcal C$ of $f=0$ (in blue) nearby the approximations }
\label{fig:piecewise_linear_approx}
\end{figure}

\subsection{Newton-like operator, uniform contraction and radii polynomials} \label{sec:uniform_contraction}

Let us define what is required to prove existence of some portion of smooth curve $\mathcal{C}^{(i)}$. Without loss of generality, let us introduce the idea to prove the existence of $\mathcal{C}^{(0)}$ that is the piece of curve close to the segment $[\overline U_0,\overline U_1]$ with two approximate tangent vectors $\dot U_0$ and $\dot U_1$ at those points given by the pseudo-arclength continuation algorithm. For any $s$ in $[0,1]$, we set
\[
\overline U_s  \bydef  (1-s) \overline U_0 + s \overline U_1 =\overline U_0 + s\Delta \overline U, \mbox{ where } \Delta \overline U  \bydef  \overline U_1 -\overline U_0
\]
and
\[
\dot U_s  \bydef  (1-s) \dot U_0 + s \dot U_1 =\dot U_0 + s\Delta\dot U, \mbox{ where } \Delta \dot U  \bydef  \dot U_1 -\dot U_0.
\]
Then we define, still for $s$ in $[0,1]$, the hyperplane $\Pi_s$ whose equation is given by
\begin{equation}
\label{Es}
E_s\left(U\right) \bydef \left(U-\overline U_s\right)\cdot \dot U_s,
\end{equation}
the function
\begin{equation}
\label{F_s}
F_s(U) \bydef 
\begin{pmatrix}
E_s(U)\\
f(U)
\end{pmatrix}
\end{equation}
and the Newton-like operator
\begin{equation}
\label{T}
T_s(U) \bydef U-JF_s(U),
\end{equation} 
where $J$ is an injective linear operator approximating the inverse of $DF_0\left(\overline U_0\right)$ (see Section~\ref{def_J} for an example of how to construct $J$ and check that it is injective). We now use the uniform contraction principle on $T_s$ to conclude about the existence of a curve of fixed points that corresponds, by injectivity of $J$, to a solution curve of $f(U)=0$. 

\begin{theorem}
\label{th_local}
If there exists some $r>0$ such that 
\[
\tilde T :
\left\{
\begin{aligned}
&\left[0,1\right] \times B(0,r) \longrightarrow B(0,r) \\
&(s,V) \longmapsto T_s\left(V+\overline U_s\right)-\overline U_s
\end{aligned}
\right.
\]
is a uniform contraction, then for every $s \in [0,1]$, there exists a unique $\tilde U(s) \in B\left(\overline U_s,r\right)$ such that $F_s\left(\tilde U(s)\right)=0$. Moreover, the function $s\mapsto \tilde U(s)$ is of class $C^{k}$ if $(s,V)\mapsto \tilde T(s,V)$ is of class $C^{k}$.
\end{theorem}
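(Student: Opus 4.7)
The plan is to recognize the statement as essentially a packaging of the uniform contraction principle (Banach fixed point theorem with parameters), applied to the family of maps $\tilde T(s,\cdot) : B(0,r) \to B(0,r)$, and then translate the resulting fixed point back through the affine change of variables $V = U - \overline U_s$ to obtain the desired zero of $F_s$.

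First I would invoke the classical uniform contraction principle: since $\tilde T$ is, by hypothesis, a contraction on $B(0,r)$ uniformly in $s \in [0,1]$, for every $s$ there exists a unique $\tilde V(s) \in B(0,r)$ with $\tilde T(s,\tilde V(s)) = \tilde V(s)$. Setting $\tilde U(s) \bydef \tilde V(s) + \overline U_s$, one has $\tilde U(s) \in B(\overline U_s, r)$ by construction, and the fixed point equation unwinds to $T_s(\tilde U(s)) = \tilde U(s)$, i.e.\ $J F_s(\tilde U(s)) = 0$. Since $J$ is injective by assumption, this forces $F_s(\tilde U(s)) = 0$. Uniqueness of $\tilde U(s)$ in $B(\overline U_s, r)$ as a solution of $F_s = 0$ follows from uniqueness of the fixed point of $\tilde T(s,\cdot)$: any other such $U^\ast \in B(\overline U_s, r)$ would give $V^\ast = U^\ast - \overline U_s \in B(0,r)$ satisfying $\tilde T(s, V^\ast) = V^\ast$, forcing $V^\ast = \tilde V(s)$.

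For the regularity statement, I would appeal to the $C^k$ version of the uniform contraction theorem (standard, see e.g.\ Chow--Hale): if the parameterized contraction $(s,V) \mapsto \tilde T(s,V)$ is of class $C^k$, then the fixed point map $s \mapsto \tilde V(s)$ is $C^k$ as well. Since $\tilde U(s) = \tilde V(s) + \overline U_s$ and $s \mapsto \overline U_s$ is affine (hence $C^\infty$), $\tilde U$ inherits the $C^k$ regularity of $\tilde V$.

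There is essentially no hard step here once the hypothesis is in place; the only care needed is the injectivity argument converting fixed points of $T_s$ into zeros of $F_s$, which is precisely why $J$ was required to be injective in the construction of Section~\ref{def_J}. The real work of the method lies in verifying the contraction hypothesis for concrete $r$, which is the purpose of Theorem~\ref{th_pol} and the radii polynomials to be introduced next, rather than in the present theorem.
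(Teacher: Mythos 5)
Your proposal is correct and follows the same route as the paper, which proves this theorem simply by citing the uniform contraction principle (Chow--Hale); you merely spell out the change of variables $V=U-\overline U_s$, the use of the injectivity of $J$ to pass from $T_s(\tilde U(s))=\tilde U(s)$ to $F_s(\tilde U(s))=0$, and the $C^k$ dependence of the fixed point, all of which is exactly what the paper's one-line proof implicitly relies on.
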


\begin{proof}
This is a direct application of the uniform contraction principle (e.g. see \cite{MR660633}). 
\end{proof}

It seems legitimate to expect $T_s$ to be a contraction on a small set containing the segment $[\overline  U_0,\overline  U_{1}]$ parameterized by $\overline U_s$ ($s \in [0,1]$) since $T_s$ is an approximate Newton operator at $\overline  U_0$. 

\subsubsection{Definition of some bounds} \label{sec:def_bornes}

To prove that $\tilde T$ is a uniform contraction on $B(0,r)$, we prove the existence of bounds $Y_d(s)$, $Y_{n}(s)$, $Z_d(r,s)$ and $Z_{n}(r,s)$ such that for every $n \in \mathbb{N}$ and $s \in[0,1]$,
\begin{equation}
\label{ineq3}
\left\vert \left[T_s\left(\overline U_s\right)-\overline U_s\right]_{d} \right\vert \leq Y_{d}(s) \mbox{  and  }  \left\vert \left[T_s\left(\overline U_s\right)-\overline U_s\right]_{n} \right\vert \leq Y_{n}(s),
\end{equation}
\begin{equation}
\label{ineq1}
 \sup \limits_{V,V'\in B(0,r)} \left\vert \left[DT_s\left(\overline U_s + V\right)\left(V'\right) \right]_{d}\right\vert \leq Z_{d}(r,s) \mbox{  and  }
 \sup\limits_{V,V'\in B(0,r)} \left\vert \left[DT_s\left(\overline U_s + V\right)\left(V'\right) \right]_{n}\right\vert \leq Z_{n}(r,s).
\end{equation}

The subscript $(\cdot)_d$ corresponds to the first entry $d$ of $U$ and the first entry $E_s$ of $F_s$. We set $Y=(Y_d,Y_0,\dots , Y_n, \dots)$, where $Y_n\in \mathbb{R}^3$ (same for $Z$). Absolute values and inequalities applied to vectors are considered component-wise. For the sake of simplicity of the presentation, we omit to write explicitly the dependence of those terms in $r$ and $s$ when we are not focusing on them. Let us now give some sufficient conditions on those bounds for $\tilde T$ to be a uniform contraction.
\begin{theorem}
\label{th_pol}
If $Y,Z$ verify (\ref{ineq3}) and (\ref{ineq1}) resp, and if there exists $r>0$ such that for all $s \in[0,1]$
\begin{equation}
\label{ineq4}
\left\Vert Y(s) + Z(r,s) \right\Vert_q < r,
\end{equation}
then $\tilde T$ is a uniform contraction on $ B(0,r)$, with contraction constant $\displaystyle{\kappa \bydef \max\limits_{s\in[0,1]}\frac{\left\Vert Z (r,s) \right\Vert_q }{r}<1}$.
\end{theorem}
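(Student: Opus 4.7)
The plan is to establish the two standard ingredients of the uniform contraction principle for $\tilde T$: (a) $\tilde T$ sends $[0,1] \times B(0,r)$ into $B(0,r)$, and (b) for each $s \in [0,1]$, $\tilde T(s,\cdot)$ is Lipschitz on $B(0,r)$ with constant $\kappa<1$ independent of $s$. Both will be obtained from the component-wise bounds \eqref{ineq3} and \eqref{ineq1} combined with the observation that $DT_s(U)(\cdot)$ is linear in its second argument.

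First, I would verify invariance. Write $\tilde T(s,V) = (T_s(\overline U_s) - \overline U_s) + (T_s(\overline U_s + V) - T_s(\overline U_s))$, and use the mean-value identity
\[
T_s(\overline U_s + V) - T_s(\overline U_s) \;=\; \int_0^1 DT_s(\overline U_s + tV)(V)\, dt.
\]
For each $t \in [0,1]$, both $tV$ and $V$ lie in $B(0,r)$, so by \eqref{ineq1} the $n$-th component of the integrand is bounded, in absolute value, by $Z_n(r,s)$, and similarly for the $d$-component. Combining with \eqref{ineq3}, one gets $|[\tilde T(s,V)]_n| \le Y_n(s) + Z_n(r,s)$ componentwise (and analogously for the $d$-entry). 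Since the bounds are nonnegative, passing to the norm \eqref{eq:norm_q} gives $\|\tilde T(s,V)\|_q \le \|Y(s)+Z(r,s)\|_q < r$ by hypothesis \eqref{ineq4}.

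Next, for the contraction estimate, apply the mean-value trick again:
\[
\tilde T(s,V) - \tilde T(s,V') \;=\; \int_0^1 DT_s\bigl(\overline U_s + V' + t(V-V')\bigr)(V-V')\, dt.
\]
Here the base point stays in $B(0,r)$ by convexity, but the direction $V-V'$ can have norm up to $2r$, so \eqref{ineq1} does not apply directly. The key step, and the one mild subtlety, is to exploit linearity of $DT_s(U)(\cdot)$ in its second argument: writing $W := r(V-V')/\|V-V'\|_q \in B(0,r)$ (when $V\neq V'$), one has $DT_s(\cdot)(V-V') = (\|V-V'\|_q/r) \, DT_s(\cdot)(W)$, and then \eqref{ineq1} yields $|[DT_s(\cdot)(V-V')]_n| \le (\|V-V'\|_q/r) Z_n(r,s)$. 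Integrating and taking the norm gives
\[
\|\tilde T(s,V) - \tilde T(s,V')\|_q \;\le\; \frac{\|Z(r,s)\|_q}{r}\, \|V-V'\|_q.
\]

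Finally, since $Y(s), Z(r,s) \ge 0$ componentwise, $\|Z(r,s)\|_q \le \|Y(s)+Z(r,s)\|_q < r$ for every $s \in [0,1]$, so $\kappa := \max_{s\in[0,1]} \|Z(r,s)\|_q / r < 1$ and the Lipschitz constant is uniform in $s$. Together with the invariance step this is exactly the definition of a uniform contraction, completing the proof. The only non-routine point is the rescaling argument to transfer the bound $Z$ from a ball to a direction of possibly larger norm; everything else is bookkeeping with the weighted norm and a standard mean-value computation.
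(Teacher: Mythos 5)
Your proof is correct and follows essentially the same route as the paper's: both rest on the mean value theorem together with the rescaling $W = r(V-V')/\left\Vert V-V'\right\Vert_q$ (exploiting linearity of $DT_s(U)(\cdot)$ in the direction) to invoke the bound \eqref{ineq1}, then combine with \eqref{ineq3} and nonnegativity of $Y,Z$ to get invariance and $\kappa<1$. The only cosmetic difference is that you use the integral form of the mean value theorem and add the $Y$ and $Z$ bounds componentwise before taking the norm, whereas the paper applies the scalar mean value theorem component by component — the substance is the same.
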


\begin{proof}
For all $s\in[0,1]$, $ j \in \cup_{n \ge 0} \{ n_x,n_y,n_z \}$, $V,V'\in B\left(\overline U_s,r\right)$, the mean value theorem yields the existence of $W=\lambda V + (1-\lambda)V'$ for some $\lambda = \lambda(j) \in [0,1]$, such that
\[
\left[T_s(V)-T_s(V')\right]_j=\left[DT_s(W)(V-V')\right]_j.
\]
%
Then
\[
\left\vert \left[T_s(V)-T_s(V')\right]_{j}  \right\vert \leq \left\vert \left[ DT_s(W)\left(\frac{r(V-V')}{\left\Vert V-V'\right\Vert_q}\right)\right]_{j} \right\vert\frac{1}{r}\left\Vert V-V'\right\Vert_q \leq \frac{Z_{j}(r,s)}{r}\left\Vert V-V'\right\Vert_q.
\]
Similarly,
\[
\left\vert \left[T_s(V)-T_s(V')\right]_d \right\vert \leq \frac{Z_d(r,s)}{r}\left\Vert V-V'\right\Vert_q.
\]
Therefore, for all $s\in [0,1]$ and $V,V' \in B(0,r)$,
\[
\left\Vert \tilde T(s,V)-\tilde T(s,V')\right\Vert_q =  \left\Vert T_s\left(V+\overline U_s\right)-T_s\left(V'+ \overline U_s\right)\right\Vert_q \leq \frac{\left\Vert Z(r,s)\right\Vert_q}{r}\left\Vert V-V'\right\Vert_q  = \kappa \left\Vert V-V'\right\Vert_q
\]
and for all $s\in [0,1]$ and $V \in B(0,r)$,
\[
\left\Vert  \tilde T(s,V) \right\Vert_q = \left\Vert T_s(V) - \overline U_s \right\Vert_q  \leq  \left\Vert T_s(V) -  T_s\left(\overline U_s\right) \right\Vert_q + \left\Vert T_s\left(\overline U_s\right) - \overline U_s \right\Vert_q \leq \left\Vert Z(r,s)+Y(s) \right\Vert_q <r .
\]
\end{proof}

Suppose the bounds $Y$ and $Z$ verifying conditions (\ref{ineq3}) and (\ref{ineq1}) are computed. To be able to use Theorem~\ref{th_pol}, we need to check that those bounds also verify inequality (\ref{ineq4}). Note that for every $n \in \mathbb{N}$, $Y_{n}$ is a function of $s$ and $Z_{n}$ is a function of both $r$ and $s$. Besides, they can be constructed as polynomials in $r$ and $s$, and for $n$ greater than some $M$, we can choose
\[
Y_n=0 \quad \mbox{and} \quad Z_n=\hat Z_M\frac{\omega_M^q}{\omega_n^q},
\]
where $\hat Z_M$ is also a polynomial in $r$ and $s$. Those assertions are not explained in details here. We refer to Section~\ref{sec:Yn_n_ge_m} and Section~\ref{sec:Zn_n_ge_M} for explicit details. For the moment, let us only say that $Y_n$ can be taken to be $0$ for n large enough because $\overline U_s$ has only a finite number $m$ of non zero coefficients, and hence $\left[T_s\left(\overline U_s\right)-\overline U_s\right]_{n}=0$ for $n$ large enough. Let us now introduce the radii polynomials which allow us to verify inequality (\ref{ineq4}) using rigorous numerics.

\subsubsection{Radii polynomials} \label{sec:def_pol}
 
Let $M$ be a computational parameter. We refer to Section~\ref{Y} to determine how to choose its value. Define 
\[
P_d(r,s) \bydef Y_d(s) + Z_d(r,s) - \frac{r}{\rho},
\] 
for $0 \le n<M$,
\[
P_{n}(r,s) \bydef Y_{n}(s) + Z_{n}(r,s) - \frac{r}{\omega_n^q},
\]
and for $n=M$,
\[
P_{M}(r,s) \bydef \hat Z_{M}(r,s)  - \frac{r}{\omega_M^q}.
\]
Note that the term $\displaystyle{- \frac{r}{\omega_n^q}}$ has to be understood component-wise.

\begin{lem}
\label{nb_fini}
Suppose that there exists $r>0$ such that for all $s \in [0,1]$,
\[
P_d(r,s)<0 \quad \mbox{and} \quad P_n(r,s)<0 \quad {\rm for~ all~} 0 \le n \leq M.
\]
Then inequality~\eqref{ineq4} is satisfied and Theorem~\ref{th_pol} holds.
\end{lem}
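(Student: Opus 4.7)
My proof plan is to unpack the definition \eqref{eq:norm_q}--\eqref{norme} of $\|\cdot\|_q$ and observe that the inequality $\|Y(s)+Z(r,s)\|_q<r$ amounts, by definition, to the conjunction of the scalar bound $|Y_d(s)+Z_d(r,s)|/\rho<r$ together with the componentwise bounds $|Y_n(s)+Z_n(r,s)|_\infty\,\omega_n^q<r$ for every $n\ge 0$. Since $Y_n$ and $Z_n$ are nonnegative upper bounds produced by \eqref{ineq3}--\eqref{ineq1}, these componentwise inequalities translate directly into the strict negativity of the radii polynomials: $P_d(r,s)<0$ is exactly the scalar inequality, and for $0\le n<M$, $P_n(r,s)<0$ gives $Y_n(s)+Z_n(r,s)<r/\omega_n^q$ componentwise, which after multiplication by $\omega_n^q$ yields $|Y_n(s)+Z_n(r,s)|_\infty\,\omega_n^q<r$.

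The second step is to reduce the a priori infinite supremum over $n\ge 0$ to a maximum over finitely many indices, by invoking the explicit tail form $Y_n=0$ and $Z_n=\hat Z_M\,\omega_M^q/\omega_n^q$ valid for $n\ge M$. With this tail, $|Y_n(s)+Z_n(r,s)|_\infty\,\omega_n^q=|\hat Z_M(r,s)|_\infty\,\omega_M^q$ for every $n\ge M$, independently of $n$, and the hypothesis $P_M(r,s)<0$ gives precisely $|\hat Z_M(r,s)|_\infty\,\omega_M^q<r$. Combining this with the scalar bound and the finitely many bounds indexed by $0\le n<M$, the supremum defining $\|Y(s)+Z(r,s)\|_q$ is realized as the maximum of finitely many strictly sub-$r$ quantities, so \eqref{ineq4} holds at $(r,s)$ for every $s\in[0,1]$, and Theorem~\ref{th_pol} applies.

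The only delicate point, and the reason $P_M$ is built from $\hat Z_M$ rather than from $Y_M+Z_M$, is precisely this tail control: the uniform domination of $Z_n\,\omega_n^q$ for $n\ge M$ by the single scalar $\hat Z_M\,\omega_M^q$ is what permits passing from a supremum over infinitely many indices to a maximum over finitely many, while preserving strictness. Once the tail structure (justified in Sections~\ref{sec:Yn_n_ge_m} and \ref{sec:Zn_n_ge_M}) is in hand, the rest of the lemma is routine bookkeeping against the definition of the norm, and no further estimate on $Y$ or $Z$ is needed.
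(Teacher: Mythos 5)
Your proof is correct and follows essentially the same route as the paper: translate $P_d<0$ and $P_n<0$ ($n<M$) into the componentwise bounds $Y+Z$ versus $r/\rho$ and $r/\omega_n^q$, then use $P_M<0$ together with the tail structure $Y_n=0$, $Z_n=\hat Z_M\,\omega_M^q/\omega_n^q$ for $n\ge M$ to control all remaining indices and conclude \eqref{ineq4}. Your extra remark that the tail values are all equal to the single quantity $\hat Z_M\,\omega_M^q<r$, so the supremum is a maximum of finitely many sub-$r$ numbers and strictness is preserved, is a nice explicit justification of a point the paper leaves implicit.
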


\begin{proof}
By definition of the radii polynomials, we have that
\[
Y_d(s)+Z_d(r,s)<\frac{r}{\rho}\qquad \mbox{and} \qquad Y_{n}(s)+Z_n(r,s)<\frac{r}{\omega_n^q}
\]
for all $0 \le n \leq M$. Since $\displaystyle{P_{M}(r,s)=\hat Z_{M}(r,s)  - \frac{r}{\omega_M^q}<0}$, we also have that 
\[
Y_{n}(s)+Z_n(r,s)=\hat Z_M(r,s)\frac{\omega_M^q}{\omega_n^q}<\frac{r}{\omega_n^q},
\]
for all $n > M$. Therefore inequality~\eqref{ineq4} is satisfied. 
\end{proof}

We show in Section~\ref{opt} how to carefully chose $\Delta_s$ and $m$ to maximize the chance of finding an $r>0$ satisfying the hypotheses of Lemma~\ref{nb_fini}. Lemma~\ref{nb_fini} is useful for checking efficiently the hypotheses of Theorem~\ref{th_pol}, and therefore Theorem~\ref{th_local} by verifying a finite number of inequalities.

\subsection{Constructing a global smooth solution curve} \label{sec:recollement}

Suppose now that we found $r>0$ satisfying the radii polynomials inequalities of Lemma~\ref{nb_fini}. Hence, by Theorem~\ref{th_local}, there exists a smooth function $s\in[0,1]\mapsto\tilde U(s)$ whose image $\cC^{(0)}$ is the only solution curve to $f=0$ within in a small tube of radius $r$ centered at the segment $[\overline U_0,\overline U_1]$. More precisely, this tube is given by $\left\{ U\ \vert \ \exists s \in [0,1],\ U \in B\left(\overline U_s,r\right) \cap \Pi_s \right\}$. The following result is similar to Lemma~10 in \cite{MR2630003}.

\begin{theorem} \label{th_reg}
Suppose that
\begin{equation} \label{eq:th_reg}
- \Delta \overline U \cdot \dot U_0 +r \left(W_1^q\right)^{[m]} \cdot \left\vert \Delta \dot U \right\vert + \left\vert \Delta \overline U \cdot  \Delta \dot U \right\vert < 0,
\end{equation}
where
\[
\left(W_1^q\right)^{[m]}=\left(\frac{1}{\rho},\underbrace{\frac{1}{\omega_0^q},\dots,\frac{1}{\omega_0^q}}_{3  \ \ times},\dots ,\underbrace{\frac{1}{\omega_{m-1}^q},\dots,\frac{1}{\omega_{m-1}^q}}_{3 \ times}\right)^T,
\]
then $\cC^{(0)}$ is a smooth curve, that is, for all $s \in [0,1]$, $\displaystyle{\frac{d \tilde U}{ds}(s) \neq 0}$.
\end{theorem}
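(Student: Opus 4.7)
The starting observation is that only the scalar constraint $E_s$ needs to be exploited: the infinite--dimensional part $f(\tilde U(s))=0$ can stay untouched because the desired conclusion $\tfrac{d\tilde U}{ds}(s)\neq 0$ follows from showing that the scalar $\tfrac{d\tilde U}{ds}(s)\cdot \dot U_s$ is nonzero (indeed, if $\tfrac{d\tilde U}{ds}(s)=0$ that inner product would vanish). So my plan is to differentiate the identity $E_s(\tilde U(s))=0$ in $s$ and estimate the resulting inner product from below using the hypothesis \eqref{eq:th_reg}.

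\noindent\textbf{Step 1 (differentiate the pseudo--arclength constraint).} Since $F_s(\tilde U(s))=0$ for every $s\in[0,1]$ by Theorem~\ref{th_local}, in particular
\[
E_s(\tilde U(s)) = \bigl(\tilde U(s)-\overline U_s\bigr)\cdot \dot U_s \equiv 0.
\]
Using $\tfrac{d}{ds}\overline U_s=\Delta\overline U$ and $\tfrac{d}{ds}\dot U_s=\Delta\dot U$, differentiating in $s$ and rearranging yields
\[
\frac{d\tilde U}{ds}(s)\cdot\dot U_s \;=\; \Delta\overline U\cdot\dot U_s \;-\; \bigl(\tilde U(s)-\overline U_s\bigr)\cdot\Delta\dot U.
\]
Expanding $\dot U_s=\dot U_0+s\,\Delta\dot U$ in the first term on the right gives
\[
\frac{d\tilde U}{ds}(s)\cdot\dot U_s \;=\; \Delta\overline U\cdot\dot U_0 \;+\; s\,\Delta\overline U\cdot\Delta\dot U \;-\; \bigl(\tilde U(s)-\overline U_s\bigr)\cdot\Delta\dot U.
\]

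\noindent\textbf{Step 2 (weighted bound on the perturbation term).} The tangent vectors $\dot U_0,\dot U_1$ are produced by the finite--dimensional pseudo--arclength continuation, so $\Delta\dot U$ has support in the first $1+3m$ coordinates. Writing $V\bydef\tilde U(s)-\overline U_s\in B(0,r)$, the definition of the ball \eqref{eq:def_ball} gives $|V_d|\le r/\rho$ and $|V_n|_\infty\le r/\omega_n^q$ for every $n\ge 0$. Bounding the Euclidean inner product $V_n\cdot(\Delta\dot U)_n$ componentwise by $|V_n|_\infty\,\|(\Delta\dot U)_n\|_1$ and summing over $n=0,\dots,m-1$ produces exactly
\[
\bigl|V\cdot\Delta\dot U\bigr| \;\le\; r\,(W_1^q)^{[m]}\cdot |\Delta\dot U|,
\]
which is the reason the weights $1/\rho$ and $1/\omega_n^q$ (repeated three times for the $x,y,z$ blocks) show up in the definition of $(W_1^q)^{[m]}$. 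Combined with the trivial bound $|s\,\Delta\overline U\cdot\Delta\dot U|\le |\Delta\overline U\cdot\Delta\dot U|$ for $s\in[0,1]$, Step~1 gives
\[
\frac{d\tilde U}{ds}(s)\cdot\dot U_s \;\ge\; \Delta\overline U\cdot\dot U_0 \;-\; |\Delta\overline U\cdot\Delta\dot U| \;-\; r\,(W_1^q)^{[m]}\cdot|\Delta\dot U|.
\]

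\noindent\textbf{Step 3 (conclude from the hypothesis).} The assumption \eqref{eq:th_reg} rearranges as
\[
\Delta\overline U\cdot\dot U_0 \;>\; r\,(W_1^q)^{[m]}\cdot|\Delta\dot U| \;+\; |\Delta\overline U\cdot\Delta\dot U|,
\]
which makes the right--hand side of the inequality in Step~2 strictly positive, uniformly in $s\in[0,1]$. In particular $\tfrac{d\tilde U}{ds}(s)\cdot\dot U_s>0$, so $\tfrac{d\tilde U}{ds}(s)\neq 0$ for every $s\in[0,1]$, proving that $\mathcal C^{(0)}$ is a smooth (regular) curve.

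\noindent\textbf{Anticipated obstacle.} The only non--obvious point is Step~2: one must see that the inner product $V\cdot\Delta\dot U$ is genuinely controlled by the \emph{dual} weighted vector $(W_1^q)^{[m]}$ rather than by $\|V\|_q\|\Delta\dot U\|_q$, and this relies crucially on $\Delta\dot U$ having only finitely many nonzero entries (otherwise the $\sup/\ell^\infty$ structure of the norm $\|\cdot\|_q$ on $\Omega_q$ would not give a summable bound against the weights $1/\omega_n^q$). Everything else is bookkeeping around differentiating the scalar hyperplane equation.
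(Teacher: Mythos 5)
Your proposal is correct and follows essentially the same route as the paper: differentiate the pseudo-arclength constraint $E_s(\tilde U(s))=0$, use $\|\tilde U(s)-\overline U_s\|_q<r$ together with the finite support of $\Delta\dot U$ to bound the perturbation term by $r\,(W_1^q)^{[m]}\cdot|\Delta\dot U|$, and invoke \eqref{eq:th_reg} to make $\frac{d\tilde U}{ds}(s)\cdot\dot U_s$ nonzero uniformly in $s$ (the paper phrases this as $\partial_s E_s(\tilde U(s))<0$, which is just your inequality with the sign flipped). Your Step 2 even spells out the weighted H\"older-type estimate that the paper leaves implicit, so there is nothing to correct.
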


\begin{proof}

Considering the equality $E_s(\tilde U(s))=0$ and taking its derivative with respect to $s$,
\[
\frac{d\tilde U}{ds}(s)\cdot \dot U_s=- \frac{\partial E_s}{\partial s}(\tilde U(s)).
\]
Recalling the definition of $E_s$ from (\ref{Es}),
\begin{equation}
\label{dEds}
\frac{\partial E_s}{\partial s}(\tilde U(s))=- \Delta \overline U \cdot \dot U_0 + \left(\tilde U(s)- \overline U_s \right) \cdot \Delta \dot U -s \Delta \overline U \cdot  \Delta \dot U.
\end{equation}
Hence, for all $s \in [0,1]$,
\[
 \frac{\partial E_s}{\partial s}(\tilde U(s)) \leq - \Delta \overline U \cdot \dot U_0 +r \left(W_1^q\right)^{[m]} \cdot \left\vert \Delta \dot U \right\vert + s\left\vert \Delta \overline U \cdot  \Delta \dot U \right\vert,
\]
since $\left\Vert \tilde U(s)- \overline U_s \right\Vert_q < r $. Finally, using inequality \eqref{eq:th_reg}, for all $s \in [0,1]$, $\displaystyle{\frac{\partial E_s}{\partial s}(\tilde U(s))\neq 0}$. This proves that for all $s \in [0,1]$, $\displaystyle{\frac{d \tilde U}{ds}(s) \neq 0}$, yielding the smoothness of $\cC^{(0)}$. 
\end{proof}

In practice, the hypothesis of Theorem~\ref{th_reg} are checked rigorously using interval arithmetic. Remark that this hypothesis is very reasonable if $r$ is small enough. See Figure~\ref{regu} for a geometric representation of the important quantities involved in the hypothesis \eqref{eq:th_reg}. One can see there that if the length of the segment $[\overline U_0,\overline U_1]$ is small, that is if the vector $\Delta \overline U$ is small, then the vectors $\Delta \overline U$ and $\Delta \dot U$ should be close to be perpendicular and the vectors $\Delta \overline U$ and $\dot U_0$ should be close to be parallel. Hence, $\Delta \overline U \cdot \dot U_0$ should be close to the value $\| \Delta \overline U \| \| \dot U_0\|$ while the vector $\Delta \overline U \cdot \Delta \dot U$ should be close to $0$. Hence, for very small value of $r$, the value of $r \left(W_1^q\right)^{[m]} \cdot \left\vert \Delta \dot U \right\vert $ should be small, and then the chances of satisfying inequality \eqref{eq:th_reg} should be high.

\begin{figure} [H]
\begin{center}
\includegraphics[width=8cm]{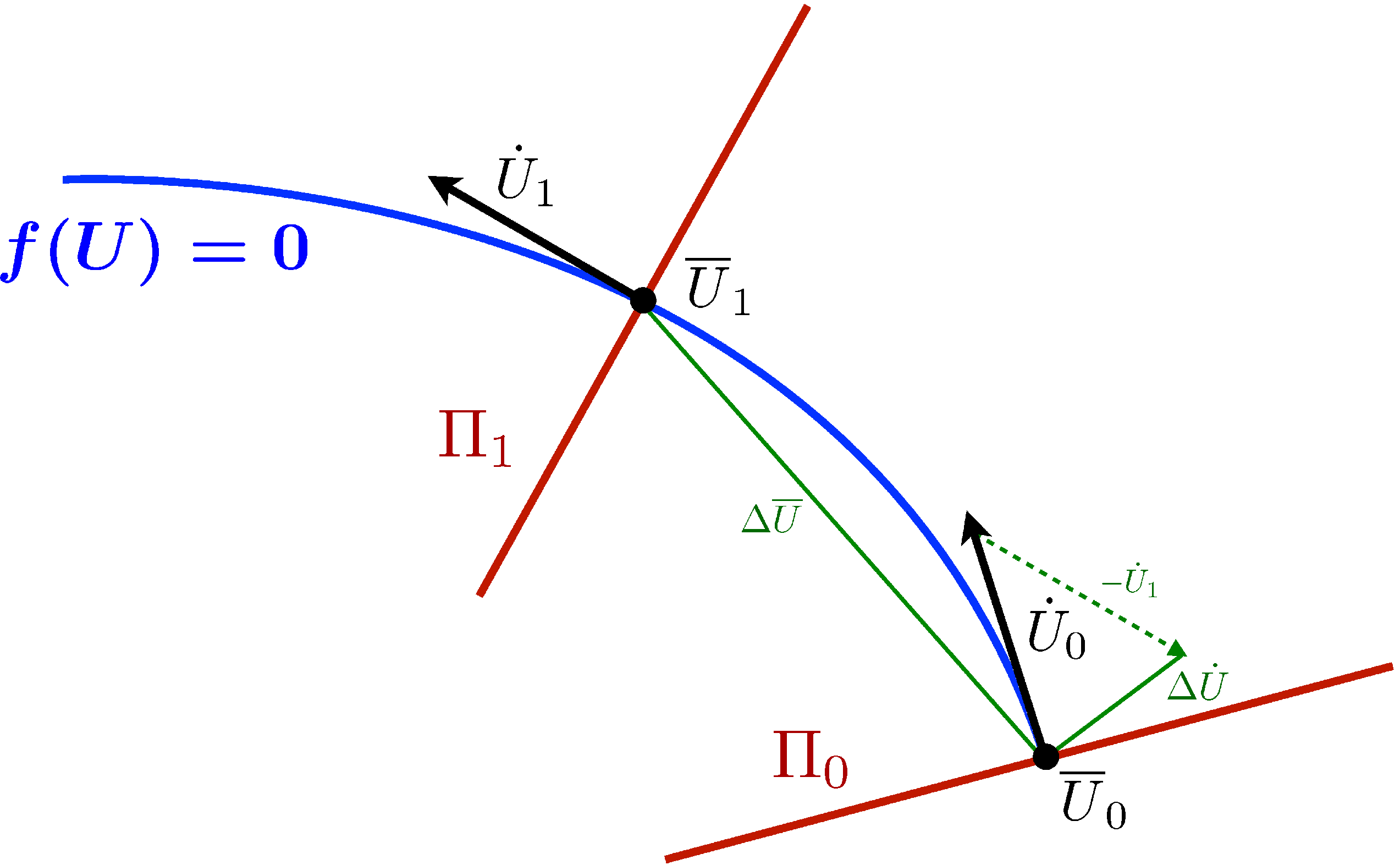}
\end{center}
\vspace{-.5cm}
\caption{\small The important quantities involved in the hypothesis \eqref{eq:th_reg} of Theorem~\ref{th_reg}.}
\label{regu}
\end{figure}

Assuming that two consecutive smooth curves have been computed, we can prove that they connect smoothly in one curve. Using the notation $(\cdot)^{(0)}$ (resp. $(\cdot)^{(1)}$) to refer to the first (resp. the second) portion of curve. The following result is similar to Proposition~8 in \cite{MR2630003} but two aspects are different. First the gluing of $\cC^{(0)}$ and $\cC^{(1)}$ comes for free from the $C^0$ representation of the union of the segments $[\overline U_0,\overline U_1]$ and $[\overline U_1,\overline U_2]$. Second, the proof that $\cC^{(0)} \cup \cC^{(1)} $ is smooth at the intersection $\cC^{(0)} \cap \cC^{(1)} $ is more detailed and calls upon the implicit function theorem.

\begin{theorem}
\label{th_rec}
Assume that the hypotheses \eqref{ineq4} of Theorem~\ref{th_pol} and \eqref{eq:th_reg} of Theorem~\ref{th_reg} are satisfied between $\overline U_0$ and $\overline U_1$, and also between $\overline U_1$ and $\overline U_2$. Then $\cC^{(0)} \cup \cC^{(1)} $ is a smooth curve.
\end{theorem}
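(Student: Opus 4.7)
The plan is to first argue that the two arcs $\cC^{(0)}$ and $\cC^{(1)}$ share a common endpoint $U^\ast$ automatically (the ``gluing-free'' aspect), and then promote this $C^0$ matching to $C^\infty$-smoothness by applying the implicit function theorem to $F_1$ at $U^\ast$. In the predictor-corrector scheme of Section~\ref{sec:pred-cor}, the corrector output $\overline U_1$ is shared between the two segments and the approximate tangent $\dot U_1$ at the end of the first segment is recycled as the starting tangent of the second; consequently the hyperplanes $\Pi_1$ (for the first segment at $s=1$) and $\Pi_0$ (for the second segment at $s=0$) both reduce to $\{U : (U - \overline U_1)\cdot \dot U_1 = 0\}$, and the localization balls $B(\overline U_1, r)$ coincide. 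Both $\tilde U^{(0)}(1)$ and $\tilde U^{(1)}(0)$ are therefore zeros of $f$ lying in the same ball and on the same hyperplane, so the uniqueness clause of Theorem~\ref{th_local} forces them to coincide at a single point $U^\ast$.

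For smoothness at $U^\ast$, the uniform contraction estimate from the proof of Theorem~\ref{th_pol} gives that $DT_1(U^\ast) = I - J\,DF_1(U^\ast)$ has operator norm (with respect to $\|\cdot\|_q$) at most $\kappa < 1$, so a Neumann series makes $J\,DF_1(U^\ast)$ a continuous bijection of $\Omega_q$. Combined with the injectivity of $J$ established in Section~\ref{def_J}, this forces $J$ itself to be an isomorphism---its range must contain the surjective image of $J\,DF_1(U^\ast)$---and hence $DF_1(U^\ast) = J^{-1}\bigl(J\,DF_1(U^\ast)\bigr)$ is an isomorphism of $\Omega_q$ as well. Since $F_1$ depends polynomially (in particular smoothly) on $U$, the implicit function theorem applied at $U^\ast$ shows that in a neighborhood of $U^\ast$ the zero set $\{U \in \Omega_q : f(U) = 0\}$ is a $C^\infty$ one-dimensional submanifold of $\Omega_q$, parameterized smoothly by the scalar $E_1(U)$.

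Both arcs $\cC^{(0)}$ (near $s=1$) and $\cC^{(1)}$ (near $s=0$) lie in this local submanifold and pass through $U^\ast$; by Theorem~\ref{th_reg} each has nonvanishing velocity, so each is locally a regular $C^1$-parameterization of the same one-dimensional submanifold. The two velocity vectors both span the (one-dimensional) kernel of $Df(U^\ast)$, and a short estimate using the identity $\dot U_s \cdot \frac{d\tilde U}{ds} = -\partial_s E_s(\tilde U(s))$ together with~\eqref{dEds} and the bound $\|\tilde U(s) - \overline U_s\|_q < r$ shows that both velocities are close to the common $\dot U_1$, hence cooriented along the manifold. Therefore $\cC^{(0)}$ approaches $U^\ast$ from one side and $\cC^{(1)}$ leaves from the other, and their union is a connected open arc of the smooth $1$-manifold, hence itself $C^\infty$. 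The main obstacle is the promotion of the componentwise bounds $Y_n, Z_n$ from Section~\ref{sec:def_bornes} into the genuine operator-norm inequality $\|I - J\,DF_1(U^\ast)\| \le \kappa < 1$ on the full infinite-dimensional space $\Omega_q$; this is exactly the role of the tail-controlling radii polynomial $P_M$. Once that is in place, the orientation and submanifold arguments are geometrically routine for small $r$.
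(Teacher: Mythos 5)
Your proposal is correct in substance, but it takes a genuinely different route from the paper for the smoothness at the junction. The endpoint-matching part is essentially the paper's (one small fix: the two balls centred at $\overline U_1$ need not \emph{coincide}, since the radii $r^{(0)}$ and $r^{(1)}$ may differ; but they are concentric, so one contains the other and the uniqueness clause of Theorem~\ref{th_local} still forces $\tilde U^{(0)}(1)=\tilde U^{(1)}(0)$). For the smooth connection the paper never inverts $DF_1$: it uses continuity of the radii polynomials in $s$ to keep the uniform contraction valid on $[0,1+\eps_0]$, thereby extending $\tilde U^{(0)}$ smoothly past $s=1$, and then applies a \emph{scalar} implicit function theorem to $\varphi(\eps,s)=\bigl(\tilde U^{(0)}(1+\eps)-\overline U^{(1)}_s\bigr)\cdot\dot U^{(1)}_s$, whose $s$-derivative is negative by \eqref{eq:th_reg} via \eqref{dEds}, to produce $s(\eps)$ with $s'(0)>0$ and to identify $\tilde U^{(0)}(1+\eps)=\tilde U^{(1)}(s(\eps))$ through uniqueness in the second tube. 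You instead invoke a Banach-space inverse/implicit function theorem for $F_1$ at $U^*$, which requires the operator bound $\Vert I-J\,DF_1(U^*)\Vert_q<1$; this is not an open obstacle, since it is exactly the computation done in the proof of Corollary~\ref{corollary:Jinvertible} from the $Z$-bounds and \eqref{ineq4}, so you may simply cite it. Two points in your sketch need tightening but are fixable with the tools at hand: (i) $DF_1(U^*)$ is an isomorphism from $\Omega_q$ onto the \emph{range} space of $F$ (roughly $\mathbb{R}\times\Omega_{q-2}$), not ``of $\Omega_q$''; your Neumann-series-plus-injectivity-of-$J$ argument does give surjectivity, and the open mapping theorem gives the bounded inverse, but the spaces must be named for the IFT to apply; (ii) the velocities need not be ``close to $\dot U_1$'' --- what the identity $\frac{d\tilde U}{ds}(s)\cdot\dot U_s=-\frac{\partial E_s}{\partial s}(\tilde U(s))$ together with \eqref{eq:th_reg} gives, and what suffices, is that both velocities have strictly positive inner product with the common $\dot U_1$, so that in the $E_1$-coordinate furnished by the IFT each arc has strictly increasing coordinate at the junction and the two arcs cover the two sides of $U^*$. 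With these repairs your argument is complete. Comparing the two: your route gives a cleaner local picture (the zero set of $f$ near $U^*$ is a smooth one-dimensional submanifold, and the no-secondary-bifurcation statement of Corollary~\ref{corollary:Jinvertible} falls out as a by-product), at the price of more functional-analytic bookkeeping; the paper's route stays entirely inside the uniform-contraction/uniqueness framework, never needs surjectivity of $J$ or invertibility of $DF$, uses only a one-dimensional implicit function theorem, and moreover exhibits the explicit overlap of the two parameterizations.
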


\begin{proof}
Since Theorem~\ref{th_pol} is satisfied over the segment $[\overline U_0,\overline U_1]$ (resp. $[\overline U_1,\overline U_2]$), consider the radius $r^{(0)}$ (resp. $r^{(1)}$) satisfying \eqref{ineq4}. Without loss of generality, one can assume that $r^{(0)} \neq r^{(1)}$ by continuity of the radii polynomials. Theorem~\ref{th_pol} allows us to use Theorem~\ref{th_local} to get the existence of two smooth functions $\tilde U^{(0)}$ and $\tilde U^{(1)}$ whose images are $\cC^{(0)}$ and $\cC^{(1)} $.
First we prove that the two curves connect, that is $\tilde U^{(0)}(1)=\tilde U^{(1)}(0)$. Recalling \eqref{F_s}, we have that $F^{(0)}_1=F^{(1)}_0$, in particular $\overline U^{(0)}_1=\overline U_1=\overline U^{(1)}_0$ and $\Pi^{(0)}_1=\Pi^{(1)}_0$. Moreover, $\tilde U^{(0)}(1)$ is the only solution of $F^{(0)}_1=0$ in $B\left(\overline U_1,r^{(0)}\right)$ and $\tilde U^{(1)}(0)$ is the only solution of $F^{(1)}_0=0$ in $B\left(\overline U_1,r^{(1)}\right)$. Hence $\tilde U^{(0)}(1)$ and $\tilde U^{(1)}(0)$ are both solutions to $F^{(1)}_0=0$ and since one of the two balls must be included into the other (they have same center) the two solutions are equal. 

From now we assume without loss of generality that $r^{(0)}<r^{(1)}$ and we show that the connection between the two curves is smooth. For all  $s$ in $[0,1]$, the radii polynomials are negative at $r^{(0)}>0$. The fact that the radii polynomials are continuous in $s$ yields the existence of $\varepsilon_0>0$ such that those polynomials are still negative for $s$ in $[0,1+\varepsilon_0]$, and then that $\tilde T^{(0)}$ is still a uniform contraction for $[0,1+\varepsilon_0]$. As a result, $\tilde U^{(0)}$ can be extended into a smooth function defined on $[0,1+\varepsilon_0]$  such that  $f\left(\tilde U^{(0)}(s)\right)=0$, for all $s$ in $[0,1+\varepsilon_0]$. Hence, $\tilde U^{(0)}\left([1,1+\varepsilon_0]\right)\subset \cC^{(1)}$. We want to find $s \in [0,1]$  such that $\tilde U^{(0)}(1+\varepsilon) \in \Pi^{(1)}_s$ for all $\varepsilon \in [0,\varepsilon_0]$. To do that, set
\[
\varphi : (\varepsilon,s)\longmapsto \left(\tilde U^{(0)}(1+\varepsilon) - \overline U^{(1)}_s \right) \cdot \dot U^{(1)}_s.
\]
We have 
$\varphi(0,0)=\left(\tilde U^{(0)}(1) - \overline U_1 \right) \cdot \dot U_1=0$
and $\varphi(0,s)=E^{(1)}_s\left(\tilde U^{(0)}(1)\right)$, so
\[
\frac{\partial \varphi}{\partial s}(0,0)=\left.\frac{\partial E^{(1)}_s}{\partial s}\right\vert_{s=0}\left(\tilde U^{(0)}(1)\right).
\]
Since the hypothesis of Theorem~\ref{th_reg} is verified, we get according to (\ref{dEds}) that $\displaystyle{\frac{\partial \varphi}{\partial s}(0,0)<0}$. Hence the implicit function theorem holds and there exist $\varepsilon_1 \in (0, \varepsilon_0]$ and a smooth function $s: \varepsilon \in [0,\varepsilon_1] \mapsto s(\varepsilon)$, such that $s(0)=0$ and $\varphi(\varepsilon,s(\varepsilon))=0$.
Also,
\[
\frac{\partial \varphi}{\partial \varepsilon}(0,0)+\frac{\partial \varphi}{\partial s}(0,0)s'(0)=0,
\]
and since
\[
\frac{\partial \varphi}{\partial \varepsilon}(0,0) = \frac{d\tilde U}{ds}^{(0)}(1)\cdot \dot U_1 >0 \mbox{ and } \frac{\partial \varphi}{\partial s}(0,0)<0 \mbox{ according to (\ref{dEds}) and the hypotheses},
\]
we have that $s'(0)>0$.
Hence, there exists $\varepsilon_2 \in (0, \varepsilon_1]$ such that for all $\varepsilon \in [0,\varepsilon_2]$, $s(\varepsilon)\in[0,1]$ and $ F^{(1)}_{s(\varepsilon)}\left(\tilde U^{(0)}(1+\varepsilon)\right)=0$. Given that for all $\varepsilon \in [0,\varepsilon_2]$, $F^{(1)}_{s(\varepsilon)}=0$ has a unique solution in $B\left(\overline U^{(1)}_{s(\varepsilon)},r^{(1)}\right)$, showing that $\left\Vert \tilde U^{(0)}(1+\varepsilon) - \overline U^{(1)}_{s(\varepsilon)} \right\Vert_q < r^{(1)}$ will conclude the proof. Now,
\begin{eqnarray*}
\left\Vert \tilde U^{(0)}(1+\varepsilon) - \overline U^{(1)}_{s(\varepsilon)} \right\Vert_q &\leq& \left\Vert \tilde U^{(0)}(1+\varepsilon) - \overline U^{(0)}_{1+\varepsilon} \right\Vert_q + \left\Vert \overline U^{(0)}_{1+\varepsilon} - \overline U^{(0)}_{1} \right\Vert_q + \left\Vert \overline U^{(1)}_{0} - \overline U^{(1)}_{s(\varepsilon)} \right\Vert_q \\
&\leq& r^{(0)} + \varepsilon \left\Vert \Delta \overline U^{(0)} \right\Vert_q + s(\varepsilon) \left\Vert \Delta \overline U^{(1)} \right\Vert_q.
\end{eqnarray*}
Using that $s(0)=0$, that $s(\varepsilon)$ is continuous and that $r^{(0)}<r^{(1)}$, there exists $\varepsilon_3 \in [0, \varepsilon_2]$, such that for all $\varepsilon \in [0,\varepsilon_3]$,
\[
\left\Vert \tilde U^{(0)}(1+\varepsilon) - \overline U^{(1)}_{s(\varepsilon)} \right\Vert_q < r^{(1)}.
\]
By uniqueness, $\tilde U^{(0)}\left([1,1+\varepsilon_3]\right)\subset \cC^{(1)}$.
\end{proof}

The following result can be used to determine that a path of solution does not undergo any secondary bifurcations. 

\begin{corollary}
\label{corollary:Jinvertible}
Consider $\displaystyle{T_s}$ defined by (\ref{T}) and assume that the hypotheses \eqref{ineq4} of Theorem~\ref{th_pol} and \eqref{eq:th_reg} of Theorem~\ref{th_reg} are satisfied. Then for every $s \in [0,1]$, $\dim \left(Ker\left(Df\left(\tilde U(s)\right)\right)\right)=1$, that is $\displaystyle{\tilde U(s)}$ is a regular path \big(where $ \displaystyle{s\mapsto \tilde U(s)}$ is defined in Theorem~\ref{th_local}\big). 
\end{corollary}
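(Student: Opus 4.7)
The plan is to show that $Df(\tilde U(s))$ has kernel of dimension exactly one by sandwiching: use the smooth curve to produce one explicit nonzero kernel element, and use the uniform contraction together with the injectivity of $J$ to rule out any further independent kernel elements.

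First, I would produce the lower bound $\dim \ker Df(\tilde U(s)) \geq 1$. Since the hypotheses of both Theorem~\ref{th_pol} and Theorem~\ref{th_reg} hold, Theorem~\ref{th_local} gives a $C^1$ curve $s \mapsto \tilde U(s)$ with $f(\tilde U(s))=0$, and Theorem~\ref{th_reg} guarantees $\frac{d\tilde U}{ds}(s)\neq 0$ for every $s \in [0,1]$. Differentiating the identity $f(\tilde U(s))=0$ in $s$ yields
\[
Df(\tilde U(s))\,\tfrac{d\tilde U}{ds}(s) = 0,
\]
so $\tfrac{d\tilde U}{ds}(s)$ is a nonzero element of $\ker Df(\tilde U(s))$.

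Second, I would obtain the upper bound $\dim \ker Df(\tilde U(s)) \leq 1$ by showing that $DF_s(\tilde U(s))$ is injective. The contraction hypothesis \eqref{ineq4} implies that the Fréchet derivative $DT_s(U) = I - J\,DF_s(U)$ satisfies $\|DT_s(U)\|_q \leq \kappa < 1$ for every $U \in B(\overline U_s,r)$, and in particular at $U = \tilde U(s)$. A standard Neumann series argument then shows that $J\,DF_s(\tilde U(s))$ is invertible on $\Omega_q$. Combined with the injectivity of $J$ (established in Section~\ref{def_J}), this forces $DF_s(\tilde U(s))$ itself to be injective: if $DF_s(\tilde U(s))V=0$ then $J\,DF_s(\tilde U(s))V=0$, and invertibility gives $V=0$.

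Third, I would translate injectivity of $DF_s$ into the kernel bound for $Df$ via the block structure of $F_s$. From \eqref{Es}--\eqref{F_s} we read off
\[
DF_s(\tilde U(s))\,V \;=\; \bigl(V\cdot \dot U_s,\; Df(\tilde U(s))\,V\bigr).
\]
Suppose by contradiction that $\ker Df(\tilde U(s))$ contains two linearly independent vectors $V_1,V_2$. Then on the two-dimensional subspace $\mathrm{span}(V_1,V_2)$, the single linear constraint $V\cdot \dot U_s = 0$ has a nontrivial solution $V = \alpha V_1 + \beta V_2 \neq 0$, and this $V$ lies in $\ker DF_s(\tilde U(s))$, contradicting the injectivity established above. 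Hence $\dim \ker Df(\tilde U(s)) \leq 1$, which with the previous step gives equality.

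The main obstacle I anticipate is the second step, namely making the Neumann series argument and the block decomposition rigorous in the Banach space setting of $\Omega_q$: one must verify that $DT_s(\tilde U(s))$ is a well-defined bounded operator with the quoted norm bound (this is essentially what the $Z$-bound encodes, and what Theorem~\ref{th_pol} actually proves), and that $J$ indeed acts as a linear map on the relevant space so that the implication ``$J\,DF_s$ invertible and $J$ injective $\Rightarrow$ $DF_s$ injective'' is valid component-wise in the $(d,u)$-splitting used in \eqref{eq:norm_q}. Once this is in place, the kernel-counting argument is purely algebraic.
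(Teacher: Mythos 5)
Your proposal is correct and follows essentially the same route as the paper: the $Z$-bounds and \eqref{ineq4} give $\left\Vert I-JDF_s\left(\tilde U(s)\right)\right\Vert_q<1$, a Neumann series plus injectivity of $J$ yields injectivity of $DF_s\left(\tilde U(s)\right)$, the block structure of $F_s$ then caps the kernel of $Df\left(\tilde U(s)\right)$ at dimension one, and differentiating $f\left(\tilde U(s)\right)=0$ together with $\frac{d\tilde U}{ds}(s)\neq 0$ from Theorem~\ref{th_reg} supplies the nonzero kernel element. Your linear-algebra phrasing of the upper bound (one linear constraint on a two-dimensional subspace) is just a repackaging of the paper's explicit construction of $W=\frac{U}{\dot U_s\cdot U}-\frac{V}{\dot U_s\cdot V}$.
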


\begin{proof}
According to (\ref{ineq3}), we have that
\[
\sup\limits_{V,V'\in B(0,r)} \left\vert \left[DT_s\left(\overline U_s + V\right)\left(V'\right) \right]_{n}\right\vert \leq Z_{n}(r,s) \mbox{  and  } \sup\limits_{V,V'\in B(0,r)} \left\vert \left[DT_s\left(\overline U_s + V\right)\left(V'\right) \right]_{d}\right\vert \leq Z_{d}(r,s).
\]
Since $ \displaystyle{\tilde U(s) \in B(\overline U_s,r)}$, we get that
\[
\sup\limits_{\tilde V\in B(0,1)} \left\vert \left[DT_s\left(\tilde U(s)\right)\left( \tilde V\right) \right]_{n}\right\vert \leq \frac{Z_{n}(r,s)}{r} \mbox{  and  } \sup\limits_{\tilde V\in B(0,1)} \left\vert \left[DT_s\left(\tilde U(s)\right)\left(\tilde V\right) \right]_{d}\right\vert \leq \frac{Z_{d}(r,s)}{r}.
\]
Hence 
\[
 \left\Vert DT_s\left(\tilde U(s)\right) \right\Vert_q \leq \frac{\left\Vert Z(r,s) \right\Vert_q}{r} \leq \frac{\left\Vert Z(r,s)+Y(r,s) \right\Vert_q}{r} < 1,
\]
because $Y$ and $Z$ have non negative entries and (\ref{ineq4}) holds. So we have that
\[
 \left\Vert I- JDF_s\left(\tilde U(s)\right) \right\Vert_q  < 1.
\]
We get that $\displaystyle{JDF_s\left(\tilde U(s)\right)}$ is invertible \big(its inverse is given by $\displaystyle{\sum_{k=0}^{\infty}\left(I-JDF_s(\tilde U(s))\right)^k}$\big), and so $ \displaystyle{DF_s\left(\tilde U(s)\right)}$ is injective. 
Let us show that  for every $s \in [0,1]$, $\dim\left(Ker\left(Df\left(\tilde U(s)\right)\right)\right) = 1$. Suppose by contradiction that $ \displaystyle{\dim\left(Ker\left(Df\left(\tilde U(s)\right)\right)\right) > 1}$. Let $\displaystyle{U,V \in \Omega_q}$ two linearly independent non-trivial vectors such that $\displaystyle{Df\left(\tilde U(s)\right)(U)=Df\left(\tilde U(s)\right)(V)=0}$.
Since $\displaystyle{DF_s\left(\tilde U(s)\right)}$ is injective,
\[
DF_s\left(\tilde U(s)\right)(U)=
\begin{pmatrix}
DE_s\left(\tilde U(s)\right)(U) \\ Df\left(\tilde U(s)\right)(U)
\end{pmatrix}
=\begin{pmatrix}
\dot U_s \cdot U\\ 0
\end{pmatrix}
\neq 0.
\]
Hence, $\displaystyle{\dot U_s \cdot U \neq 0} $ and $\displaystyle{\dot U_s \cdot V \neq 0} $, and then we can define $\displaystyle{W= \frac{U}{\dot U_s \cdot U}- \frac{V}{\dot U_s \cdot V} \neq 0} $. We conclude that $ \displaystyle{DF_s\left(\tilde U(s)\right)(W)=0}$. This is a contradiction. Hence $ \dim\left(Ker\left(Df\left(\tilde U(s)\right)\right)\right) \leq 1$. 
The derivative of the relation $ f\left(\tilde U(s)\right)=0$ with respect to $s$ is given by
\[
Df(\tilde U(s)) \left( \frac{d \tilde U}{ds}(s)\right)=0.
\]
We showed in the proof of Theorem~\ref{th_reg} since \eqref{eq:th_reg} holds, that $\displaystyle{ \frac{d \tilde U}{ds}(s) \neq 0}$. Hence, for every $s \in [0,1]$, $\dim\left(Ker\left(Df\left(\tilde U(s)\right)\right)\right)=1$, meaning by definition that $\tilde U(s)$ is a regular path.
\end{proof}

\section{Application to a 3-component reaction-diffusion PDEs} \label{sec:applic}

In this section, we present all quantities and estimates to construct explicitly the radii polynomials required to apply the theory of the general method of Section~\ref{method} to the problem of computing rigorously global smooth branches of steady states of the system of three reaction-diffusion PDEs given by \eqref{eq:mimura_system}. The first step is to consider a Galerkin projection of $f$ given in $\eqref{fourier}$. 

\subsection{Finite dimensional projection} \label{sec:finite_dim_projection}
Given a finite dimensional parameter $m$, denote $x^{[m]} \in \mathbb{R}^m$ to be $(x_0,\dots,x_{m-1})$ and $u^{[m]} \in \mathbb{R}^{3m}$ to be $(x_0,y_0,z_0,\dots , x_{m-1},y_{m-1},z_{m-1})$. The finite dimensional Galerkin projection of $f$ is 
\[
f^{[m]} \bydef 
\left\{
\begin{aligned}
\mathbb{R}\times \mathbb{R}^{3m} &\longrightarrow \mathbb{R}^{3m}\\
U^{[m]} \bydef \left(d,u^{[m]}\right) &\longmapsto \left(f_0^{[m]}\left(U^{[m]}\right),\dots,f_{m-1}^{[m]}\left(U^{[m]}\right)\right),
\end{aligned}
\right.
\]
where for $n \in \{0, \dots , m-1\}$,
\begin{eqnarray}
\label{f^m}
f_{n_x}^{[m]}\left(U^{[m]}\right) & = & (r_1-d(\pi n)^2)x_n + \frac{1}{\varepsilon}y_n - \frac{a_1}{2} \sum_{\scriptsize\substack{\vert k \vert < m \\ \vert n-k \vert < m}} x_{\vert n-k \vert} x_{\vert k \vert} - \frac{a_1}{2} \sum_{\scriptsize\substack{\vert k \vert < m \\ \vert n-k \vert < m}} x_{\vert n-k \vert} y_{\vert k \vert} \nonumber \\
& - & (\frac{b_1}{2}+\frac{1}{2\varepsilon N})\sum_{\scriptsize\substack{\vert k \vert < m \\ \vert n-k \vert < m}} x_{\vert n-k \vert}z_{\vert k \vert} - \frac{1}{2\varepsilon N}\sum_{\scriptsize\substack{\vert k \vert < m \\ \vert n-k \vert < m}} y_{\vert n-k \vert} z_{\vert k \vert}, \nonumber \\
f_{n_y}^{[m]}\left(U^{[m]}\right) & = & ((r_1 - \frac{1}{\varepsilon} - (d+\beta N) (\pi n)^2 )y_n - \frac{a_1}{2} \sum_{\scriptsize\substack{\vert k \vert < m \\ \vert n-k \vert < m}} y_{\vert n-k \vert} y_{\vert k \vert} - \frac{a_1}{2} \sum_{\scriptsize\substack{\vert k \vert < m \\ \vert n-k \vert < m}} x_{\vert n-k \vert} y_{\vert k \vert} \nonumber \\
& - & (\frac{b_1}{2}-\frac{1}{2\varepsilon N})\sum_{\scriptsize\substack{\vert k \vert < m \\ \vert n-k \vert < m}} y_{\vert n-k \vert} z_{\vert k \vert} + \frac{1}{2\varepsilon N}\sum_{\scriptsize\substack{\vert k \vert < m \\ \vert n-k \vert < m}} x_{\vert n-k \vert} z_{\vert k \vert}, \nonumber \\
f_{n_z}^{[m]}\left(U^{[m]}\right) & = & (r_2 - d (\pi n)^2 )z_n - \frac{a_2}{2} \sum_{\scriptsize\substack{\vert k \vert < m \\ \vert n-k \vert < m}} z_{\vert n-k \vert} z_{\vert k \vert}  \nonumber \\
&-& \frac{b_2}{2} \sum_{\scriptsize\substack{\vert k \vert < m \\ \vert n-k \vert < m}} x_{\vert n-k \vert} z_{\vert k \vert} - \frac{b_2}{2} \sum_{\scriptsize\substack{\vert k \vert < m \\ \vert n-k \vert < m}} y_{\vert n-k \vert} z_{\vert k \vert} = 0.
\end{eqnarray}

\subsection{Explicit construction of the contraction \boldmath $T_s$ \unboldmath}
\label{def_J}

To define the Newton-like operator $T_s$ given in (\ref{T}), remember that we have to build an injective linear operator $J$ which approximates the inverse of $DF_0\left(\overline U_0\right)$ (with $\overline U_0$ an approximate zero of $f$ given by a predictor-corrector step).
Since 
\[
DF_0\left(\overline U_0\right)=
\left(
\begin{array}{cccccc}
\multicolumn{6}{c}{ \left( \dot U_0 \right) ^T} \\ \hline
\multicolumn{1}{c|}{ } & & & & & \\
\multicolumn{1}{c|}{ } & & & & & \\ 
\multicolumn{1}{c|}{\frac{\partial f}{\partial d}{\left(\overline U_0\right)}} &\multicolumn{5}{c}{\qquad D_uf\left(\overline U_0\right)\qquad} \\  
\multicolumn{1}{c|}{ } & & & & & \\ 
\multicolumn{1}{c|}{ } & & & & & \\ 
\end{array}
\right),
\]
we take
\begin{equation}
\label{J}
J=
\begin{pmatrix}
J^{[m]} & & 0 & \\
 & J_m & & \\
0 & & J_{m+1} & \\
 & & & \ddots\\
\end{pmatrix},
\end{equation}
where $J^{[m]}$ is a numerical inverse of $DF_0^{[m]}\left(\overline U^{[m]}\right)$ that is
\[
J^{[m]} \approx
\left(
\begin{array}{cccccc}
\multicolumn{6}{c}{\left(\dot U_0^{[m]}\right)^T} \\ \hline
\multicolumn{1}{c|}{ } & & & & & \\
\multicolumn{1}{c|}{ } & & & & & \\ 
\multicolumn{1}{c|}{\frac{\partial f^{[m]}}{\partial d}{\left(\overline U^{[m]}_0\right)}} &\multicolumn{5}{c}{\qquad D_uf^{[m]}\left(\overline U^{[m]}_0\right)\qquad} \\  
\multicolumn{1}{c|}{ } & & & & & \\ 
\multicolumn{1}{c|}{ } & & & & & \\ 
\end{array}
\right)^{-1},
\]
and $J_n$ ($n\geq m$) is a $3\times3$ matrix defined as
\renewcommand{\arraystretch}{1.5}
\begin{eqnarray}
\label{Jn}
J_n & = &
\begin{pmatrix}
r_1-d(\pi n)^2 & \frac{1}{\varepsilon} & 0 \\
0 & r_1-\frac{1}{\varepsilon}-(d+\beta N)(\pi n)^2 & 0 \\
0 & 0 & r_2-d(\pi n)^2
\end{pmatrix}^{-1} \nonumber \\
& = & 
\begin{pmatrix}
\frac{1}{r_1-d(\pi n)^2} & -\frac{1}{\varepsilon(r_1-d(\pi n)^2)(r_1-\frac{1}{\varepsilon}-(d+\beta N)(\pi n)^2)} & 0 \\
0 & \frac{1}{r_1-\frac{1}{\varepsilon}-(d+\beta N)(\pi n)^2} & 0 \\
0 & 0 & \frac{1}{r_2-d(\pi n)^2}
\end{pmatrix}.
\end{eqnarray}
$J_n$ is the inverse of the linear part of $f_n(d,\cdot)=\left(f_{n_x}(d,\cdot),f_{n_y}(d,\cdot),f_{n_z}(d,\cdot)\right)$. The idea behind this choice is explained soon and its interest will appear concretely in Section~\ref{Zn}. To prove that $J$ is really injective, we compute $\left\Vert I^{[m]} - J^{(m)}DF_0(\overline U_0) \right\Vert_{\infty}$ using interval arithmetic and check that its value is less than one, and then prove that the matrices $J_n$ ($n\geq m$) are invertible \big(we only need to check using interval arithmetic that for all $ n\geq m$, $r_1-d(\pi n)^2 \neq 0$ and $r_2-d(\pi n)^2 \neq 0$, since $r_1-\frac{1}{\varepsilon}-(d+\beta N)(\pi n)^2 < 0$ with the values of the parameters we are considering\big). 

%
%

\subsection{Verifying that \boldmath $T_s\left(\Omega_q\right)\subset \Omega_q$ \unboldmath and bootstrap argument}
\label{sec:bootstrap}

Remark that because of the $(\pi n)^2$ terms, $f$ goes from the space $\Omega_q$ to the space $ \left\lbrace u \ \vert \ \left\Vert u \right\Vert_{q-2} < \infty\right\rbrace$. Indeed, since $\Omega_{q}$ is a Banach algebra (see Section~\ref{Estimes}), the non linear terms in $f$ do not affect the decay rate of $f(U)$. However, thanks to the choice of $J_n$, $J$ defined by \eqref{J} goes from $\Omega_{q-2}$ to $\Omega_q$ so that $T_s\left(\Omega_q\right)\subset \Omega_q$. Remark that if $q \in (1,2)$, we cannot directly conclude that the solution $U\in \Omega_q$ of $f=0$ is a strong solution of (\ref{eq:mimura_system}). However, since $f(U)=0$, then (\ref{fourier}) becomes
\[
  \left\{
      \begin{aligned}
        & d(\pi n)^2x_n = r_1x_n + \frac{1}{\varepsilon}y_n - a_1 [x^2]_n - a_1 [x\ast y]_n - (b_1+\frac{1}{\varepsilon N})[xz]_n - \frac{1}{\varepsilon N}[y\ast z]_n, \\
        & (d+\beta N) (\pi n)^2y_n = ((r_1 - \frac{1}{\varepsilon})y_n - a_1 [y^2]_n - a_1 [x\ast y]_n - (b_1-\frac{1}{\varepsilon N})[y\ast z]_n + \frac{1}{\varepsilon N}[x\ast z]_n, \\
        & d (\pi n)^2 z_n = r_2z_n - a_2 [z^2]_n - b_2 [x\ast z]_n - b_2 [y\ast z]_n,
      \end{aligned}
    \right.
\]
where each right-hand-side is in $\Omega_q$ (here again we use the fact that $\left(\Omega_{q},\ast\right)$ is a Banach algebra). One can then easily see by dividing on both sides by $(\pi n)^2$ that $U$ is in fact in $\Omega_{q+2}$. We can repeat this bootstrap argument to prove that any zero $U$ of $f$ lying in some $\Omega_q$ ($q>1$) is in fact in every $\Omega_{q_0}$ for $q_0>1$ and hence corresponds to $C^{\infty}$ steady states of (\ref{eq:mimura_system}). Furthermore, the estimates of Section~\ref{Estimes} can be used to get explicit bounds for derivatives of any order for those solution functions, even if we did the proof with a $q \in (1,2)$.

We are almost ready to compute explicitly the bounds $Y$ and $Z$ and the radii polynomials. But to do so, we need to compute $D^2F_s$ and to bound the convolution product that appear in it. That is what we do in the next two sections.

\subsection{Computation of \boldmath  $DF_s$ \unboldmath and \boldmath $D^2F_s$ \unboldmath }

In this section, we use the notation
\[
\left\{
\begin{aligned}
&\overline U_0=(\overline d,\overline x_0,\overline y_0,\overline z_0,\dots,\overline x_n,\overline y_n,\overline z_n,\dots),\\
&V=(d,x_0,y_0,z_0,\dots,x_n,y_n,z_n,\dots),\\
&V'=(d',x'_0,y'_0,z'_0,\dots,x'_n,y'_n,z'_n,\dots).
\end{aligned}
\right.
\]
Recall the definition of $F_s$ in (\ref{F_s}).
First, 
\[
\left[DF_s(\overline U_0)(V)\right]_d = \dot U_s \cdot V,
\]
and
\[
\left[D^2F_s(\overline U_0)(V)(V')\right]_d = 0.
\]
With the expression of $f_n$ in mind (\ref{fourier}), we set
\[
L_n(U)=
\begin{pmatrix}
(r_1-d(\pi n)^2)x_n + \frac{1}{\varepsilon}y_n  \\
((r_1 - \frac{1}{\varepsilon} - (d+\beta N) (\pi n)^2 )y_n \\
 (r_2 - d (\pi n)^2 )z_n 
\end{pmatrix}
\]
and
\[
Q_n(U)=
\begin{pmatrix}
- a_1 [x^2]_n - a_1 [x\ast y]_n - (b_1+\frac{1}{\varepsilon N})[x\ast z]_n - \frac{1}{\varepsilon N}[y\ast z]_n \\
- a_1 [y^2]_n - a_1 [x\ast y]_n - (b_1-\frac{1}{\varepsilon N})[y\ast z]_n + \frac{1}{\varepsilon N}[x\ast z]_n \\
- a_2 [z^2]_n - b_2 [x\ast z]_n - b_2 [y\ast z]_n 
\end{pmatrix},
\]
so that $f_n=L_n+Q_n$. Then we have
\begin{equation}
\label{df}
\left[DF_s(\overline U_0)(V)\right]_n=Df_n(\overline U_0)(V)=D_dL_n(\overline U_0)(d) + D_uL_n(\overline U_0)(v) + DQ_n(\overline U_0)(V)
\end{equation}
and
\begin{eqnarray}
\left[D^2F_s(\overline U_0)(V)(V')\right]_n &=& D^2f_n(\overline U_0)(V)(V') 
\label{d2f} \\
\nonumber
&=& D^2_{ud}L_n(\overline U_0)(d)(v') + D^2_{du}L_n(\overline U_0)(v)(d') + D^2Q_n(\overline U_0)(V)(V').
\end{eqnarray}
Also, $D^2Q_n(\overline U_0)(V)(V')=DQ_n(V)(V')$ because $DQ_n$ is linear ($Q_n$ is quadratic). More explicitly, all terms in \eqref{df} and \eqref{d2f} can be recovered from the fact that
\[
D_dL_n(\overline U_0)(d)=-(\pi n)^2 \overline u_n d,
\]
\[
D_uL_n(\overline U_0)(v)=
\begin{pmatrix}
(r_1- \overline d(\pi n)^2)x_n + \frac{1}{\varepsilon}y_n  \\
((r_1 - \frac{1}{\varepsilon} - ( \overline d+\beta N) (\pi n)^2 )y_n \\
 (r_2 -  \overline d (\pi n)^2 )z_n 
\end{pmatrix},
\]
$DQ_n(V)(V')=$
\[
\begin{small}
\hspace{-1cm}
\begin{pmatrix}
-2a_1[x\ast x']_n -a_1[y\ast x']_n -(b_1+\frac{1}{\varepsilon N})[z\ast x']_n  -a_1[x\ast y']_n -\frac{1}{\varepsilon N}[z\ast y']_n  -b_1[x\ast z']_n -\frac{1}{\varepsilon N}([x\ast z']_n +[y\ast z']_n)\\
-a_1[y\ast x']_n +\frac{1}{\varepsilon N}[z\ast x']_n  -2a_1[y\ast y']_n-a_1[x\ast y']_n+(\frac{1}{\varepsilon N}-b_1)[z\ast y']_n -b_1[y\ast z']_n +\frac{1}{\varepsilon N}([x\ast z']_n +[y\ast z']_n)\\
-b_2[z\ast x']_n  -b_2[z\ast y']_n  -2a_2[z\ast z']_n-b_2([x\ast z']_n +[y\ast z']_n)
\end{pmatrix},
\end{small}
\]
\[
D^2_{ud}L_n(\overline U_0)(d)(v')=- \left( \pi n \right)^2 dv'_n
\]
and
\[
D^2_{du}L_n(\overline U_0)(v)(d')=- \left( \pi n \right)^2 d'v_n.
\]

\subsection{Analytic estimates}
\label{Estimes}

In order to bound all terms in \eqref{df} and \eqref{d2f},  in particular quantities like $[x \ast y]_n  $, for $n \in \mathbb{N}$, we have to develop some analytic estimates. Similar estimates have been produced for the case $q \ge 2$ (e.g. \cite{MR2718657}), but not for the case $q \in (1,2)$. From these estimates, we get that $\left(\Omega_q, \ast\right)$ is a Banach algebra for each $q>1$. First notice that, for all $x,y\in \Omega_q$,
\[
\left\vert \left[ x\ast y \right]_n \right\vert \omega_n^q = \frac{1}{2}\omega_n^q \sum_{k\in\mathbb{Z}}x_{\vert k \vert}y_{\vert n-k \vert}
 \leq  \frac{1}{2} \sum_{k\in\mathbb{Z}} \frac{\omega_n^q}{\omega_k^q \omega_{n-k}^q} \left\Vert x \right\Vert_q \left\Vert y \right\Vert_q.
\]
Thus, what we need to show is that 
\[
\Psi_n^q  \bydef  \sum_{k\in\mathbb{Z}} \frac{\omega_n^q}{\omega_k^q \omega_{n-k}^q}
\]
is bounded for $n\in\mathbb{N}$. We start by rewriting $\Psi_n^q$.
If $n=0$,
\[
\sum_{k\in\mathbb{Z}} \frac{\omega_n^q}{\omega_k^q \omega_{n-k}^q} = \sum_{k=-\infty}^{-1} \frac{\omega_n^q}{\omega_k^q \omega_{n-k}^q} + 1 + \sum_{k=1}^{\infty} \frac{\omega_n^q}{\omega_k^q \omega_{n-k}^q} 
= 1 + 2\sum_{k=1}^{\infty} \frac{ n^q}{k^q \left(n+k\right)^q},
\]
and if $n>0$,
\begin{eqnarray}
\label{decompo}
\sum_{k\in\mathbb{Z}} \frac{\omega_n^q}{\omega_k^q \omega_{n-k}^q} &=& \sum_{k=-\infty}^{-1} \frac{\omega_n^q}{\omega_k^q \omega_{n-k}^q} + 1 +\sum_{k=1}^{n-1} \frac{\omega_n^q}{\omega_k^q \omega_{n-k}^q} + 1 + \sum_{k=n+1}^{\infty} \frac{\omega_n^q}{\omega_k^q \omega_{n-k}^q} \nonumber \\
&=& 2 + 2\sum_{k=1}^{\infty} \frac{ n^q}{k^q \left(n+k\right)^q} + \sum_{k=1}^{n-1} \frac{ n^q}{k^q \left(n-k\right)^q}.
\end{eqnarray}

In everything that follows, $K$ is a computational parameter (the larger $K$ is, the sharper the estimates will, but the greater the computational cost for the evaluation of the estimates will be) and $M$ is another computational parameter (which is taken equal to $2m-1$, see Section~\ref{Y}). First we define, for $n \geq 2$ 
\begin{equation} \label{eq:chi_n}
\chi_n(q)=\left(\frac{q}{2-q}+\frac{q(q-1)}{2(3-q)}+\frac{q(q-1)}{2\left\lfloor \frac{n}{2} \right\rfloor}+\frac{2-\left(2/3\right)^q}{\left\lfloor \frac{n}{2} \right\rfloor}- \frac{2-\left(2/3\right)^q}{q-1}\right)\frac{1}{\left\lfloor \frac{n}{2} \right\rfloor^{q-1}},
\end{equation}
and $q^*(M)$ the unique zero of $\chi_M$ in $(1,2)$. Note that $\chi_M$ is increasing on $(1,2)$, goes to $-\infty$ as $q$ goes to $1$ and to $\infty$ as $q$ goes to $2$, so $q^*(M)$ is well defined.
Then we define 
\[
\gamma_M^q(K)  \bydef 
\left\{
\begin{aligned}
& 2\sum_{k=1}^K\frac{1}{k^q} + \frac{2}{(q-1)K^{q-1}},\quad \mbox{if } 1<q<q^*(M)\\
& 2\sum_{k=1}^K\frac{1}{k^q}+\frac{2}{(q-1)K^{q-1}}+2\chi_M(q),\quad \mbox{if } q^*(M)\leq q <2\\
& 2\left(\frac{M}{M-1}\right)^q + \left( \frac{4\ln(M-2)}{M}+\frac{\pi^2-6}{3}\right)\left(\frac{2}{M}+\frac{1}{2}\right)^{q-2} ,\quad \mbox{if } q \geq 2,
\end{aligned}
\right.
\]
and finally
\begin{equation}
\label{def_alpha}
\alpha_n^q(K)  \bydef 
\left\{
\begin{aligned}
& 1 + 2\sum_{k=1}^K\frac{1}{k^q} + \frac{2}{(q-1)K^{q-1}},\quad \mbox{if } n=0\\
& 2 + 2\sum_{k=1}^K\frac{1}{k^q} + \frac{2}{(q-1)K^{q-1}} + \sum_{k=1}^{n-1} \frac{ n^q}{k^q \left(n-k\right)^q},\quad \mbox{if } 1\leq n <M\\
& 2 + 2\sum_{k=1}^K\frac{1}{k^q} + \frac{2}{(q-1)K^{q-1}} + \gamma^{(q)}_M ,\quad \mbox{if } n \geq M.
\end{aligned}
\right.
\end{equation}

\begin{prop}
\label{alpha}
Let $q>1$, $K$ and $M\geq 6$ computational parameters. For all $n\in\mathbb{N}$,
\[
\Psi_n^q \leq \alpha_n^q(K).
\] 
\end{prop}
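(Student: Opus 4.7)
The starting point is the splitting \eqref{decompo}, which already decomposes $\Psi_n^q$ into a constant (equal to $1$ for $n=0$, or $2$ for $n\ge 1$), a ``tail'' piece $2\sum_{k\ge 1} n^q/(k^q(n+k)^q)$, and a ``middle'' piece $\sum_{k=1}^{n-1} n^q/(k^q(n-k)^q)$. The plan is to dominate each piece by the corresponding term in the definition \eqref{def_alpha} of $\alpha_n^q(K)$, uniformly in $n$ when needed. The tail piece is handled by the trivial inequality $n/(n+k)\le 1$: this gives $n^q/(k^q(n+k)^q)\le 1/k^q$, and splitting the resulting series at $k=K$ together with the integral comparison $\sum_{k>K} 1/k^q \le \int_K^\infty t^{-q}\,dt = 1/((q-1)K^{q-1})$ yields exactly the contribution $2\sum_{k=1}^K 1/k^q + 2/((q-1)K^{q-1})$ appearing in every branch of $\alpha_n^q(K)$. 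The cases $n=0$ and $1\le n<M$ then follow immediately, since the middle piece is either empty or kept explicit inside $\alpha_n^q(K)$.

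All of the work is therefore concentrated in the case $n\ge M$, where one must establish the uniform-in-$n$ bound $\sum_{k=1}^{n-1} n^q/(k^q(n-k)^q) \le \gamma_M^q(K)$. By the symmetry $k \leftrightarrow n-k$, it suffices to handle $2\sum_{k=1}^{\lfloor n/2\rfloor}$. Near $k=1$ the ratio $n/(n-k)$ is close to $1$ and the summand behaves like $1/k^q$; near $k=\lfloor n/2\rfloor$ the summand scales like $(2/n)^q$ times a bounded factor, and is thus small for $n$ large. No single estimate is sharp across all $q>1$. For $q\ge 2$ the mass of the sum concentrates at the endpoints: one controls the boundary contribution at $k=1$ by $(M/(M-1))^q$ and bounds the bulk by a uniform estimate exploiting the convergence of $\sum 1/k^2$, which produces the third branch of $\gamma_M^q(K)$. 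For $q$ close to $1$, the crude bound on $(n/(n-k))^q$ applied piecewise already fits within the first branch $2\sum_{k=1}^K 1/k^q + 2/((q-1)K^{q-1})$ via a direct $\zeta$-series estimate.

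The principal obstacle is the intermediate regime $q \in [q^*(M),2)$, where neither of the two preceding approaches is sharp. One recovers the required bound by Taylor-expanding $(n/(n-k))^q = (1+k/(n-k))^q$ to second order in $k/(n-k)$ and controlling the remainder by its value at $k=\lfloor n/2\rfloor$; the constants appearing in \eqref{eq:chi_n} are exactly those needed so that the resulting correction $2\chi_M(q)$ closes the inequality uniformly in $n\ge M$. The threshold $q^*(M)$ itself is well defined because $\chi_M$ is continuous and strictly increasing on $(1,2)$, with $\chi_M(q)\to -\infty$ as $q\to 1^+$ and $\chi_M(q)\to +\infty$ as $q\to 2^-$; this also ensures that the two sub-regimes of $\gamma_M^q(K)$ match continuously at $q=q^*(M)$. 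Verifying this Taylor-based, uniform-in-$n$ bound is the most delicate step; once it is established, assembling the three pieces yields $\Psi_n^q \le \alpha_n^q(K)$ in all cases.
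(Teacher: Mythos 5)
Your overall architecture (the splitting \eqref{decompo}, the bound $n^q/(k^q(n+k)^q)\le 1/k^q$ with the integral tail $\tfrac{1}{(q-1)K^{q-1}}$, the symmetrization $k\leftrightarrow n-k$, and the second-order expansion of $\left(1+\tfrac{k}{n-k}\right)^q$ producing $\chi_M(q)$ in the regime $q\in[q^*(M),2)$) matches the paper's proof. But there is a genuine gap in your treatment of the regime $1<q<q^*(M)$. You claim that there ``the crude bound on $(n/(n-k))^q$ applied piecewise already fits within the first branch via a direct $\zeta$-series estimate.'' This cannot work: for every $1\le k\le \lfloor n/2\rfloor$ each summand satisfies $\frac{n^q}{k^q(n-k)^q}>\frac{1}{k^q}$, so the middle block genuinely exceeds $2\sum_{k\le n/2}1/k^q$, and a piecewise crude bound such as $(n/(n-k))^q\le 2^q$ overshoots the first branch by a multiplicative factor $2^q$. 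The only reason the first branch (essentially $2+4\sum_k 1/k^q$) suffices for small $q$ is a cancellation: the excess $\sum_{k\le n/2}\frac{1}{k^q}\big[(n/(n-k))^q-1\big]$, which is of order $\big(\tfrac{q}{2-q}+\tfrac{q(q-1)}{2(3-q)}+\cdots\big)\lfloor n/2\rfloor^{1-q}$, must be beaten by the deficit coming from the tails, of order $\tfrac{2-(2/3)^q}{q-1}\lfloor n/2\rfloor^{1-q}$ (the paper extracts this via the factor $2-(2/3)^q$ on $\sum_{k>\lfloor n/2\rfloor}1/k^q$, using that $n/(n+k)\le 2/3$ there). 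Comparing these two is exactly the statement $\chi_n(q)<0$; so the small-$q$ case requires the very same Taylor machinery you reserve for $q\in[q^*(M),2)$, plus a monotonicity-in-$n$ argument showing $\chi_M(q)<0\Rightarrow\chi_n(q)<0$ for all $n\ge M$ (and, in the other regime, $\chi_n(q)\le\chi_M(q)$). Indeed $q^*(M)$ is \emph{defined} as the point where this comparison changes sign, so no argument that avoids the comparison can identify the correct threshold.

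Two smaller points. For the remainder in the binomial expansion, the paper does not control it ``by its value at $k=\lfloor n/2\rfloor$''; it uses that for $q\in(1,2)$ and $u\in(0,1)$ the series for $(1+u)^q$ is alternating from the second-order term on, so truncating after $qu+\tfrac{q(q-1)}{2}u^2$ already gives an upper bound. For $q\ge 2$ the paper simply quotes the estimate from the earlier work of Gameiro and Lessard rather than reproving it; your sketch of that case is plausible but would need to be carried out (or cited) to reproduce the precise constants in the third branch of $\gamma_M^q(K)$.
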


This allows us to state the following result.
\begin{lem}
\label{n>m}
Let $x,y\in \Omega_q$, $q>1$, $K$ and $M\geq 6$ computational parameters. For all $n\geq M$,
\[
\left\vert \left[ x\ast y \right]_n  \right\vert\leq \frac{1}{2}\frac{\alpha_M^q(K)}{\omega_n^q}\left\Vert x \right\Vert_q \left\Vert y \right\Vert_q.
\]
\end{lem}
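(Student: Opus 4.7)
The plan is to reduce the claim to Proposition~\ref{alpha} by a direct manipulation of the convolution. First, starting from the defining formula $[x\ast y]_n = \tfrac{1}{2}\sum_{k\in\mathbb{Z}} x_{|k|} y_{|n-k|}$ and the pointwise bound $|x_{|k|}|_\infty \le \|x\|_q / \omega_{|k|}^q$ (and likewise for $y$), I would write
\[
\bigl|[x\ast y]_n\bigr|_\infty \;\le\; \frac{1}{2}\,\|x\|_q\,\|y\|_q \sum_{k\in\mathbb{Z}} \frac{1}{\omega_{|k|}^q\,\omega_{|n-k|}^q}.
\]
Multiplying by $\omega_n^q$ and recognizing the sum as $\Psi_n^q$ yields
\[
\bigl|[x\ast y]_n\bigr|_\infty\,\omega_n^q \;\le\; \frac{1}{2}\,\Psi_n^q\,\|x\|_q\,\|y\|_q.
\]

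Next, I would apply Proposition~\ref{alpha} to replace $\Psi_n^q$ by $\alpha_n^q(K)$. The key observation — and essentially the only nontrivial point — is that in the third branch of the definition~\eqref{def_alpha}, namely for $n \ge M$, the expression
\[
\alpha_n^q(K) \;=\; 2 + 2\sum_{k=1}^K \frac{1}{k^q} + \frac{2}{(q-1)K^{q-1}} + \gamma_M^q(K)
\]
does not actually depend on $n$. Hence $\alpha_n^q(K) = \alpha_M^q(K)$ for every $n \ge M$, and dividing by $\omega_n^q$ finishes the argument.

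There is no real obstacle here once Proposition~\ref{alpha} has been established: the estimate is a one-line manipulation of the convolution followed by recognizing that the $n \ge M$ branch of $\alpha_n^q$ collapses to a single constant. The heavy lifting — controlling $\Psi_n^q$ for the three regimes $1<q<q^*(M)$, $q^*(M)\le q < 2$, and $q\ge 2$ via the weighted tail sums $\gamma_M^q(K)$ and the auxiliary function $\chi_n(q)$ defined in~\eqref{eq:chi_n} — is precisely what Proposition~\ref{alpha} packages, so Lemma~\ref{n>m} is a direct corollary rather than an independent calculation.
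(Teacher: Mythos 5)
Your argument is correct and is exactly the route the paper takes: the convolution bound $\left\vert [x\ast y]_n \right\vert \omega_n^q \leq \tfrac{1}{2}\Psi_n^q \left\Vert x \right\Vert_q \left\Vert y \right\Vert_q$ is displayed immediately before Proposition~\ref{alpha}, and the lemma is then presented as a direct consequence of $\Psi_n^q \leq \alpha_n^q(K)$. Your one nontrivial observation --- that the $n \geq M$ branch of \eqref{def_alpha} is independent of $n$, so $\alpha_n^q(K)=\alpha_M^q(K)$ there --- is precisely what makes the paper's uniform-in-$n$ statement with the fixed constant $\alpha_M^q(K)$ legitimate.
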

This bound, in addition of being very useful later in this paper, proves that $\left(\Omega_q,\ast\right)$ is a Banach algebra for $q>1$.

\begin{proof}{\em (of Proposition~\ref{alpha})}
The bound for $q\geq 2$ is due to \cite{MR2718657}. We prove the bound for $q<2$. The case $n<M$ is a direct consequence of the following inequality applied to (\ref{decompo})
\[
\sum_{k=1}^{\infty} \frac{ n^q}{k^q \left(n+k\right)^q} \leq \sum_{k=1}^{\infty} \frac{1}{k^q} \leq \sum_{k=1}^K\frac{1}{k^q} + \frac{1}{(q-1)K^{q-1}}.
\]
For the case $n\geq M$, let us consider the difference
\[
\Delta_n^q \bydef \Psi_n^q - \left(2+4\sum_{k=1}^{\infty}\frac{1}{k^q}\right).
\]
Using (\ref{decompo}) and the inequality below
\begin{eqnarray*}
\sum_{k=1}^{n-1}\frac{n^q}{k^q(n-k)^q} & = & \sum_{k=1}^{\left\lfloor \frac{n}{2} \right\rfloor }\frac{n^q}{k^q(n-k)^q}+\sum_{k=\left\lfloor \frac{n}{2} \right\rfloor+1}^{n-1}\frac{n^q}{k^q(n-k)^q} \\
& = & \sum_{k=1}^{\left\lfloor \frac{n}{2} \right\rfloor}\frac{n^q}{k^q(n-k)^q}+\sum_{l=1}^{n- \left\lfloor \frac{n}{2} \right\rfloor -1}\frac{n^q}{l^q(n-l)^q} \qquad \mbox{(we set }l=n-k) \\
& \leq & 2\sum_{k=1}^{\left\lfloor \frac{n}{2} \right\rfloor }\frac{n^q}{k^q(n-k)^q} \qquad \left(n-\left\lfloor \frac{n}{2} \right\rfloor -1 \leq \left\lfloor \frac{n}{2} \right\rfloor\right),
\end{eqnarray*}
we get that
\begin{eqnarray}
\label{Delta}
\Delta_n^q & \leq & 2\left(\sum_{k=1}^{\infty}\frac{1}{k^q}\left(\frac{n^q}{(n+k)^q}-1\right) + \sum_{k=1}^{\left\lfloor \frac{n}{2} \right\rfloor}\frac{1}{k^q}\left(\frac{n^q}{(n-k)^q}-1\right) -\sum_{k=\left\lfloor \frac{n}{2} \right\rfloor+1}^{\infty}\frac{1}{k^q} \right) \nonumber\\
& \leq & 2\left(\sum_{k=1}^{\left\lfloor \frac{n}{2} \right\rfloor}\frac{1}{k^q}\left(\frac{n^q}{(n-k)^q}-1\right) -\left(2-\left(\frac{2}{3}\right)^q\right)\sum_{k=\left\lfloor \frac{n}{2} \right\rfloor+1}^{\infty}\frac{1}{k^q} \right).
\end{eqnarray}
The first term of (\ref{Delta}) can be bounded from above as follows
\begin{eqnarray}
\label{term1}
\sum_{k=1}^{\left\lfloor \frac{n}{2} \right\rfloor}\frac{1}{k^q}\left(\frac{n^q}{(n-k)^q}-1\right) & = & \sum_{k=1}^{\left\lfloor \frac{n}{2} \right\rfloor }\frac{1}{k^q}\left( \left(1+\frac{k}{(n-k)}\right)^q -1\right) \nonumber \\
& = & \sum_{k=1}^{\left\lfloor \frac{n}{2} \right\rfloor }\frac{1}{k^q}\sum_{l=1}^{\infty}\frac{q(q-1)\dots (q-l+1)}{l!}\frac{k^l}{(n-k)^l} \nonumber \\
& \leq & \sum_{k=1}^{\left\lfloor \frac{n}{2} \right\rfloor }\frac{1}{k^q}\left(q\frac{k}{(n-k)}+\frac{q(q-1)}{2}\frac{k^2}{(n-k)^2} \right). 
\end{eqnarray}
The last inequality is due to the fact that for all $u \in (0,1)$, the series expansion 
\[
(1+u)^q=1+\sum_{k=1}^{\infty}\frac{q(q-1)\dots (q-k+1)}{k!}u^k
\]
is an alternating series for $k\geq 2$ (recall that $q\in (1,2)$).
According to (\ref{term1}), we have to bound $\displaystyle{\sum_{k=1}^{\left\lfloor \frac{n}{2} \right\rfloor }\frac{q}{k^q}\frac{k}{(n-k)}}$ and $\displaystyle{\sum_{k=1}^{\left\lfloor \frac{n}{2} \right\rfloor }\frac{q(q-1)}{2k^q}\frac{k^2}{(n-k)^2}}$.
First,
\[
\sum_{k=1}^{\left\lfloor \frac{n}{2} \right\rfloor }\frac{1}{k^{q-1}}  \leq  1 + \int_{1}^{\left\lfloor \frac{n}{2} \right\rfloor} \frac{dt}{t^{q-1}} 
 =  1+ \frac{1}{2-q}\left(\left\lfloor \frac{n}{2} \right\rfloor^{2-q}-1 \right) 
 \leq  \frac{1}{2-q}\left\lfloor \frac{n}{2} \right\rfloor^{2-q},
\]
and then,
\begin{equation}
\label{somme1}
\sum_{k=1}^{\left\lfloor \frac{n}{2} \right\rfloor }\frac{q}{k^q}\frac{k}{(n-k)}  \leq  \frac{q}{\left\lfloor \frac{n}{2} \right\rfloor}\sum_{k=1}^{\left\lfloor \frac{n}{2} \right\rfloor }\frac{1}{k^{q-1}} 
 \leq  \frac{q}{2-q}\frac{1}{\left\lfloor \frac{n}{2} \right\rfloor^{q-1}}. 
\end{equation}
Similarly 
\[
\sum_{k=1}^{\left\lfloor \frac{n}{2} \right\rfloor }k^{2-q} \leq \int_{1}^{\left\lfloor \frac{n}{2} \right\rfloor}t^{2-q}dt+\left\lfloor \frac{n}{2} \right\rfloor^{2-q}
= \frac{1}{3-q}\left(\left\lfloor \frac{n}{2} \right\rfloor^{3-q}-1 \right)+\left\lfloor \frac{n}{2} \right\rfloor^{2-q}
\leq \frac{1}{3-q}\left\lfloor \frac{n}{2} \right\rfloor^{3-q}+\left\lfloor \frac{n}{2} \right\rfloor^{2-q},
\]
and then
\begin{equation}
\label{somme2}
\sum_{k=1}^{\left\lfloor \frac{n}{2} \right\rfloor }\frac{q(q-1)}{2k^q}\frac{k^2}{(n-k)^2}  \leq  \frac{q(q-1)}{2\left\lfloor \frac{n}{2} \right\rfloor^2}\sum_{k=1}^{\left\lfloor \frac{n}{2} \right\rfloor }k^{2-q}
\leq \frac{q(q-1)}{2\left\lfloor \frac{n}{2} \right\rfloor^{q-1}}\left(\frac{1}{3-q}+\frac{1}{\left\lfloor \frac{n}{2} \right\rfloor} \right).
\end{equation}
According to (\ref{term1}), (\ref{somme1}) and (\ref{somme2}), we get 
\begin{equation}
\label{difference}
\sum_{k=1}^{\left\lfloor \frac{n}{2} \right\rfloor}\frac{1}{k^q}\left(\frac{n^q}{(n-k)^q}-1\right) \leq \left(\frac{q(q-1)}{2(3-q)}+\frac{q(q-1)}{2\left\lfloor \frac{n}{2} \right\rfloor}+\frac{q}{2-q}  \right)\frac{1}{\left\lfloor \frac{n}{2} \right\rfloor^{q-1}}.
\end{equation}
Then we bound the second term of (\ref{Delta}) from below
\begin{equation}
\label{reste}
\sum_{k=\left\lfloor \frac{n}{2} \right\rfloor +1}^{\infty }\frac{1}{k^q } = \sum_{k=\left\lfloor \frac{n}{2} \right\rfloor}^{\infty }\frac{1}{k^q }-\frac{1}{\left\lfloor \frac{n}{2} \right\rfloor^{q}} 
\geq \int_{\left\lfloor \frac{n}{2} \right\rfloor}^{\infty} \frac{dt}{t^q}-\frac{1}{\left\lfloor \frac{n}{2} \right\rfloor^{q}}
 =  \frac{1}{q-1}\frac{1}{\left\lfloor \frac{n}{2} \right\rfloor^{q-1}}-\frac{1}{\left\lfloor \frac{n}{2} \right\rfloor^{q}}.
\end{equation}
Using (\ref{Delta}), (\ref{difference}) and (\ref{reste}) , we get  
\begin{equation}
\label{lim}
\Delta_n^q \leq  2\left( \frac{q(q-1)}{2(3-q)}+\frac{q(q-1)}{2\left\lfloor \frac{n}{2} \right\rfloor}+\frac{q}{2-q}+\frac{1}{\left\lfloor \frac{n}{2} \right\rfloor}-\frac{1}{q-1} \right)\frac{1}{\left\lfloor \frac{n}{2} \right\rfloor^{q-1}} = 2\chi_n(q).
\end{equation}
Now, if $q<q^*(M)$ then $\chi_M(q)<0$ and hence for all $n \geq M$, $\chi_n(q)<0$. Thus
\[
\Psi_n^q \leq 2 + 4\sum_{k=1}^{\infty}\frac{1}{k^q} \leq 2 + 4\sum_{k=1}^{K}\frac{1}{k^q} + \frac{4}{(q-1)K^{q-1}}.
\]
If $q\geq q^*(M)$ then $\chi_M(q)\geq 0$ and we have two cases. The first case is that $\chi_n(q)$ decreases until becoming negative for some $n_0>M$ which implies that $\chi_n(q)<0$ for all $n \geq n_0$. The second case is that $\chi_n(q)$ stays positive for all $n\geq M$ which implies that $\chi_n(q)$ is decreasing for all $n\geq M$. In both cases we get that $\chi_n(q)\leq \chi_M(q)$ for all $n\geq M$ and thus
\[
\Psi_n^q \leq 2 + 4\sum_{k=1}^{\infty}\frac{1}{k^q} + 2\chi_M(q) \leq 2 + 4\sum_{k=1}^{K}\frac{1}{k^q} + \frac{4}{(q-1)K^{q-1}} + 2\chi_M(q).
\]
\end{proof}

Notice that since $\lim\limits_{n}\chi_n(q)=0$, (\ref{lim}) shows that $\limsup\limits_n \Delta_n^q \leq 0$. We could do the same kind of computation to bound $\Delta_n^q$ from below and show that in fact $\lim\limits_{n}\Delta_n^q=0$. So the bound for $q<q^*(M)$ is optimal, the only thing that can be improved is the way we approximate $\displaystyle{\sum_{k=1}^{\infty}\frac{1}{k^q}}$ (which depends of $K$). But this also shows that the bound for $q^*(M)\leq q<2$ may not be optimal, in fact it becomes quite bad when $q$ is close to $2$ since $\lim\limits_{q\rightarrow 2}\chi_M(q)=\infty$. However, if sharp estimates are needed for $q$ close to $2$, there is a numerical way to get almost optimal bounds which is detailed it in Appendix~\ref{appendix:sharper_estimates}. We now give other bounds that are sharper for $n<M$ by using computations. Let us define
\[
C_n^q(K)  \bydef   \sum\limits_{\substack {  n_1+n_2= n \\ \vert n_1 \vert,\vert n_2 \vert <K }} \frac{1}{\omega_{n_1}^q}\frac{1}{\omega_{n_2}^q},
\]
and
\[
\epsilon_n^q(K) \bydef  \frac{2}{(q-1)(K-1)^{q-1}}\left( \frac{1}{(K-n)^q}+\frac{1}{(K+n)^q} \right).
\]

\begin{lem}[Sharper estimates] 
\label{n<m}
Let $x,y\in \Omega_q$, $q>1$, $K$ and $M$ computational parameters, $M\leq K$. for all $ n<M$
\[
\left\vert \left[ x \ast y \right]_n \right\vert \leq  \frac{1}{2}\left(C_n^q(K)+\epsilon_n^q(K)\right)\left\Vert x \right\Vert_{q}\left\Vert y \right\Vert_{q},
\]
\end{lem}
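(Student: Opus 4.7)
The plan is to mimic the opening steps used before Proposition~\ref{alpha}, but with an additional splitting that isolates a finite, numerically computable block from an analytic tail. Starting from the definition of the convolution and the bound $|x_k|\le \|x\|_q/\omega_k^q$ (similarly for $y$), I reindex the convolution via $n_1=k$, $n_2=n-k$ to obtain
\begin{equation*}
|[x\ast y]_n| \;\le\; \frac{1}{2}\,\|x\|_q\|y\|_q \sum_{n_1+n_2=n}\frac{1}{\omega_{|n_1|}^q\,\omega_{|n_2|}^q}.
\end{equation*}
I then decompose the inner sum as $\Sigma_{\text{core}}+\Sigma_{\text{tail}}$, where the core collects all pairs $(n_1,n_2)$ with $|n_1|,|n_2|<K$. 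By definition, $\Sigma_{\text{core}}=C_n^q(K)$, so everything reduces to controlling $\Sigma_{\text{tail}}$, which runs over pairs with $\max(|n_1|,|n_2|)\ge K$.

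For the tail, the summand is symmetric under the swap $n_1\leftrightarrow n_2$, so $\Sigma_{\text{tail}}\le 2\,\Sigma_{|n_1|\ge K}$, where the latter is the sum restricted to $|n_1|\ge K$ (with $n_2=n-n_1$). I split this further according to the sign of $n_1$. Since $n<M\le K$, the case $n_1\ge K$ gives $|n_2|=n_1-n\ge K-n\ge 1$, so $\omega_{|n_2|}^q\ge (K-n)^q$, leaving $\sum_{n_1\ge K}n_1^{-q}$ to be bounded by the integral estimate $\int_{K-1}^\infty t^{-q}\,dt=(q-1)^{-1}(K-1)^{1-q}$. Symmetrically, the case $n_1\le -K$ (substitute $m=-n_1\ge K$) yields $|n_2|=m+n\ge K+n$, hence $\omega_{|n_2|}^q\ge (K+n)^q$ and the same integral estimate applies to $\sum_{m\ge K}m^{-q}$. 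Summing these two pieces and multiplying by the factor $2$ from symmetry produces exactly
\begin{equation*}
\Sigma_{\text{tail}} \;\le\; \frac{2}{(q-1)(K-1)^{q-1}}\!\left(\frac{1}{(K-n)^q}+\frac{1}{(K+n)^q}\right) \;=\; \epsilon_n^q(K),
\end{equation*}
which combined with the core bound gives the claimed inequality.

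No step is genuinely difficult here; the substance is just a careful splitting plus the standard integral comparison used throughout Section~\ref{Estimes}. The only subtle point, and where I would be most careful, is the symmetry argument that reduces the tail to pairs with $|n_1|\ge K$: I need to observe that $\{|n_1|\ge K\}\cup\{|n_2|\ge K\}$ need not be disjoint, but the bound $\Sigma_{\text{tail}}\le \Sigma_{|n_1|\ge K}+\Sigma_{|n_2|\ge K}=2\Sigma_{|n_1|\ge K}$ absorbs this. The bookkeeping with signs of $n_1$ also deserves care, but the assumption $n<M\le K$ ensures $K\pm n\ge 1$ throughout, so all powers in the denominators are well-defined and the integral estimates are valid without modification.
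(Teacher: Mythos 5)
Your proposal is correct and follows essentially the same route as the paper: split the convolution sum into the block with $\vert n_1\vert,\vert n_2\vert<K$ (which is $C_n^q(K)$ by definition) and the tail with $\max(\vert n_1\vert,\vert n_2\vert)\geq K$, bound the tail by twice the sum over $\vert n_1\vert\geq K$ using the symmetry of the summand, treat the two signs of $n_1$ to get the factors $(K-n)^{-q}$ and $(K+n)^{-q}$, and finish with the integral comparison $\sum_{k\geq K}k^{-q}\leq \frac{1}{(q-1)(K-1)^{q-1}}$. The details, including the non-disjointness remark and the use of $n<M\leq K$, match the paper's argument.
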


\begin{proof}
\[
\left\vert \left[ x \ast y \right]_n \right\vert \leq \frac{1}{2} \left(\sum\limits_{\substack {  n_1+n_2= n }} \frac{1}{\omega_{n_1}^q}\frac{1}{\omega_{n_2}^q}\right)\left\Vert x \right\Vert_{q}\left\Vert y \right\Vert_{q}.  
\]
We can split the summation in two parts :
\[
\sum\limits_{\substack {  n_1+n_2= n }} \frac{1}{\omega_{n_1}^q}\frac{1}{\omega_{n_2}^q} = \sum\limits_{\substack {  n_1+n_2= n \\ \vert n_1 \vert,\vert n_2 \vert <K }} \frac{1}{\omega_{n_1}^q}\frac{1}{\omega_{n_2}^q}+\sum\limits_{\substack {  n_1+n_2= n \\ \max(\vert n_1 \vert, \vert n_2 \vert)  \geq K }} \frac{1}{\omega_{n_1}^q}\frac{1}{\omega_{n_2}^q}.
\]
The first one is exactly $C_n^q(K)$. We now bound the second one :
\begin{eqnarray*}
\sum\limits_{\substack {  n_1+n_2= n \\ \max(\vert n_1 \vert, \vert n_2 \vert) \geq K }} \frac{1}{\omega_{n_1}^q}\frac{1}{\omega_{n_2}^q} & \leq & 2 \sum\limits_{\substack {  n_1+n_2= n \\ \vert n_1 \vert \geq K }} \frac{1}{\omega_{n_1}^q}\frac{1}{\omega_{n_2}^q} \\
& \leq & 2\sum_{n_1=K}^{\infty} \frac{1}{\omega_{n_1}^q}\left(\frac{1}{\omega_{n-n_1}^q}+\frac{1}{\omega_{n+n_1}^q}\right) \\
& \leq & \frac{2}{(q-1)(K-1)^{q-1}}\left( \frac{1}{(K-n)^q}+\frac{1}{(K+n)^q} \right).
\end{eqnarray*}
\end{proof}

\begin{remark}
$q^*(M)$, the unique zero of  $\chi_M$ in $(1,2)$ defined in \eqref{eq:chi_n}, is increasing in $M$ and converges rather rapidly towards a bounded value. In particular, for $M\ge 100$ (which is always the case for the proofs presented in this work), one has that $q^*(M) \ge q^*(100)=1.4730$. Numerically, the limit when $M$ goes to $\infty$ is about $1.475$.
\end{remark}

\subsection{Computation of \boldmath $Y_n$ \unboldmath }
\label{Y}

According to (\ref{ineq3}), we focus here on 
\[
\left\vert T_s\left(\overline U_s\right) - \overline U_s \right\vert  =  \left\vert JF_s\left(\overline U_s\right) \right\vert 
\leq  \left\vert J \right\vert \left\vert F_s\left(\overline U_s\right) \right\vert. 
\]
Remember that absolute values and inequalities applied to vectors or matrices should be understood component wise. Observe that $E_s(\overline U_s)=0$ so that
\[
F_s\left(\overline U_s\right) =
\begin{pmatrix}
0\\
f\left(\overline U_s\right)
\end{pmatrix}. 
\]
Because of the shape of $J$ in (\ref{J}), we compute separately the bounds for $n<m$ and $n \geq m$. 

\subsubsection{\underline{Case \boldmath $n <m$ \unboldmath}}

Following the notation introduced earlier, $Y^{[m]}$ is the vector containing $Y_d$ and $Y_{n}$ for any $n<m$. We want to bound the quantity
\[
\left\vert J^{[m]} \right\vert \left\vert
\begin{pmatrix}
0\\
f^{[m]}\left(\overline U_s\right)
\end{pmatrix}
\right\vert.
\]
With the expression $\overline U_s= \overline U_0 + s\Delta \overline U$ and a Taylor expansion, we get that
\[
f^{[m]}\left(\overline U^{[m]}_s\right) = f^{[m]}\left(\overline U_0^{[m]}\right) + sDf^{[m]}\left(\overline U_0^{[m]}\right)\left(\Delta \overline U^{[m]}\right) + \frac{s^2}{2}D^2f^{[m]}\left(\overline U_0^{[m]}\right)\left(\Delta \overline U^{[m]}\right)^2,
\] 
which can be bounded, for $s\in [0,1]$, by
\[
\tilde f^{[m]}  \bydef  \left\vert f^{[m]}\left(\overline U_0^{[m]}\right)\right\vert +s\left\vert Df^{[m]}\left(\overline U_0^{[m]}\right) \left( \Delta\overline U^{[m]} \right) \right\vert + \frac{s^2}{2}\left\vert D^2f^{[m]}\left(\overline U_0^{[m]}\right)\left(\Delta\overline U^{[m]}\right)^2 \right\vert .
\] 
We see in Section~\ref{I} that we can get a uniform bound in $s$ by taking $s=1$ in the expression above. This uniform bound is simple to get but not the sharpest. We actually show in Appendix~\ref{appendix:sharper_s_bound} how to compute a sharper bound. The vectors $Df^{[m]}\left(\overline U_0^{[m]}\right) \left( \Delta\overline U^{[m]} \right)$ and $D^2f^{[m]}\left(\overline U_0^{[m]}\right)\left(\Delta\overline U^{[m]}\right)^2$ are computed with the expressions (\ref{df}) and (\ref{d2f}) by truncating the convolution products  as in \eqref{f^m}. This is a finite computation. We can set
\[
\label{Ym}
Y^{[m]} \bydef 
\left\vert J^{[m]} \right\vert
\begin{pmatrix}
0\\
\tilde f^{[m]}
\end{pmatrix}.
\]
 
\subsubsection{\underline{Case \boldmath $n \geq m$ \unboldmath}} \label{sec:Yn_n_ge_m}

For $n \geq m $,
\[
\left\vert \left[ T_s(\overline U_s) - \overline U_s  \right]_n \right\vert  =  \left\vert J_nf_n(\overline U_s) \right\vert = \left\vert J_n \left( f_n( \overline U_0)+sDf_n(\overline U_0) (\Delta\overline U ) + \frac{s^2}{2} D^2f_n(\overline U_0)(\Delta\overline U)^2 \right) \right\vert .
\]
We can then set
\[
Y_n \bydef \left\vert J_n \right\vert \left( \left\vert  f_n( \overline U_0) \right\vert +  s\left\vert Df_n(\overline U_0)( \Delta\overline U ) \right\vert +\frac{s}{2} \left\vert D^2f_n(\overline U_0)(\Delta\overline U)^2 \right\vert \right).
\]
The terms $Df_n\left(\overline U_0 \right) \left( \Delta\overline U \right)$ and $D^2f_n\left(\overline U_0\right)\left(\Delta\overline U\right)^2$ are computed with the expressions (\ref{df}) and (\ref{d2f}).
We can compute $Y_n $ in a finite number of operations, because for any $n \geq m $, $ \left[ \overline U_0 \right]_n=0 $ and $ \left[ \Delta\overline U \right]_n=0$. In particular, for any $n \geq 2m-1 $, $f_n(\overline U_s)=0$ and we can take $Y_n=0$, so only a finite number of $Y_n$ remains to be computed. Hence, setting $M=2m-1$, we have that for all $ n\geq M$, $Y_n=0$. In the next section, we see that for all $ n\geq M$, we can set $Z_n=\hat Z_M\frac{\omega_M^q}{\omega_n^q}$. In fact, the value of $M$ is determined by the degree of the non linearities of $f$. Here we have quadratic terms like 
\[
\left[\overline x \ast \overline y\right]_n=\sum_{k=n-m+1}^{m-1}\overline x_k \overline y_{n-k} = 0 \mbox{ for } n\geq M=2m-1,
\]
and with non linearities of degree $p$, we would have the same by taking $M=p(m-1)+1$. 

\subsection{Computation of \boldmath $Z_n$ \unboldmath}
\label{Z}

For $V,V'\in B(0,r) $ and $s\in [0,1] $, using a Taylor expansion, we get 
\begin{eqnarray}
\label{DT}
 DT_s( \overline U_s + V)(V') & = & \left(I-JDF_s(\overline U_0+s\Delta\overline U + V)\right)(V') \nonumber \\ 
& = & \left(I-J \left( DF_s(\overline U_0)+ D^2F_s(\overline U_0 )(s\Delta\overline U+V) \right) \right)(V').  
\end{eqnarray}
As for $Y$, we compute separately the bounds for $n<m$ and $n \geq m$.

\subsubsection{\underline{Case \boldmath $n <m$ \unboldmath}}

For all $ V'\in B(0,r) $, the shape of $J$ in (\ref{J}) allows us to write 
\begin{eqnarray}
\label{I-DF}
\left[\left(I-JDF_s(\overline U_0 )\right)(V')\right]^{[m]} & = & V'^{[m]}-\left[JDF_s(\overline U_0 )(V')\right]^{[m]} \nonumber \\ 
& = & V'^{[m]}-J^{[m]}\left[DF_s(\overline U_0 )(V')\right]^{[m]} \nonumber \\
& = & V'^{[m]}-J^{[m]}\left(DF_s^{[m]}(\overline U_0 )\left(V'^{[m]}\right)+R^{[m]}(\overline U_0,V') \right) \nonumber \\
& = & \left( I^{[m]}-J^{(m)}DF_s^{[m]}(\overline U_0^{[m]} )\right)\left(V'^{[m]}\right)-J^{(m)}R^{[m]}(\overline U_0,V')\nonumber \\ 
& = & \left( I^{[m]}-J^{[m]}\left( DF_0^{[m]}(\overline U_0^{[m]}) + s
\begin{pmatrix}
\left(\Delta \dot U \right)^T \\
0
\end{pmatrix}
 \right) \right)\left(V'^{[m]}\right) \nonumber \\
 & & -J^{(m)}R^{[m]}(\overline U_0,V') ,
\end{eqnarray}
where
\[
R^{[m]}_d(\overline U_0,V')=\sum_{k=m}^{\infty} \frac{\partial E_s}{\partial x_k}(\overline U_0)x'_k+\sum_{k=m}^{\infty} \frac{\partial E_s}{\partial y_k}(\overline U_0)y'_k+\sum_{k=m}^{\infty} \frac{\partial E_s}{\partial z_k}(\overline U_0)z'_k=0,
\]
and for all $ n\in \{0,\dots,m-1\}$,
\begin{eqnarray*}
R^{[m]}_{n_x}(\overline U_0,V') & = & \sum_{k=m}^{\infty} \frac{\partial f_{n_x}}{\partial x_k}(\overline U_0)x'_k+\sum_{k=m}^{\infty} \frac{\partial f_{n_x}}{\partial y_k}(\overline U_0)y'_k+\sum_{k=m}^{\infty} \frac{\partial f_{n_x}}{\partial z_k}(\overline U_0)z'_k \\
& = & -a_1 \sum_{k=m}^{m+n-1} \overline x_{k-n}x'_k-\frac{a_1}{2}\sum_{k=m}^{m+n-1} \overline y_{k-n}x'_k-\frac{1}{2}(b_1+\frac{1}{\varepsilon N})\sum_{k=m}^{m+n-1} \overline z_{k-n}x'_k \\
& &-\frac{a_1}{2}\sum_{k=m}^{m+n-1} \overline x_{k-n}y'_k-\frac{1}{2\varepsilon N}\sum_{k=m}^{m+n-1} \overline z_{k-n}y'_k \\
& &-\frac{1}{2}(b_1+\frac{1}{\varepsilon N})\sum_{k=m}^{m+n-1} \overline x_{k-n}z'_k -\frac{1}{2\varepsilon N}\sum_{k=m}^{m+n-1} \overline y_{k-n}z'_k. 
\end{eqnarray*}
Similarly,
\begin{eqnarray*}
R^{[m]}_{n_y}(\overline U_0,V')
& = & -a_1 \sum_{k=m}^{m+n-1} \overline y_{k-n}y'_k-\frac{a_1}{2}\sum_{k=m}^{m+n-1} \overline x_{k-n}y'_k-\frac{1}{2}(b_1-\frac{1}{\varepsilon N})\sum_{k=m}^{m+n-1} \overline z_{k-n}y'_k \\
& &-\frac{a_1}{2}\sum_{k=m}^{m+n-1} \overline y_{k-n}x'_k+\frac{1}{2\varepsilon N}\sum_{k=m}^{m+n-1} \overline z_{k-n}x'_k \\
& &-\frac{1}{2}(b_1-\frac{1}{\varepsilon N})\sum_{k=m}^{m+n-1} \overline y_{k-n}z'_k +\frac{1}{2\varepsilon N}\sum_{k=m}^{m+n-1} \overline x_{k-n}z'_k, 
\end{eqnarray*}
and
\begin{eqnarray*}
R^{[m]}_{n_z}(\overline U_0,V')
& = & -\frac{b_2}{2}\sum_{k=m}^{m+n-1} \overline z_{k-n}x'_k -\frac{b_2}{2}\sum_{k=m}^{m+n-1} \overline z_{k-n}y'_k \\
& &-a_2\sum_{k=m}^{m+n-1} \overline z_{k-n}z'_k -\frac{b_2}{2}\sum_{k=m}^{m+n-1} \overline x_{k-n}z'_k-\frac{b_2}{2}\sum_{k=m}^{m+n-1} \overline y_{k-n}z'_k. 
\end{eqnarray*}
$R^{[m]}(\overline U_0 ,V')$ can be bounded uniformly for $V' \in B(0,r) $ by $\tilde R^{[m]}(\overline U_0)r$ where 
\[
\tilde R^{[m]}_d(\overline U_0)  \bydef  0,
\]
\[
  \tilde R^{[m]}_{n_x}(\overline U_0) 
  \bydef  \frac{1}{2}\sum_{k=m}^{m+n-1}\left((3a_1+b_1+\frac{1}{\varepsilon N})\vert \overline x_{k-n} \vert +(a_1+ \frac{1}{\varepsilon N})\vert \overline y_{k-n} \vert +(b_1+\frac{2}{\varepsilon N})\vert \overline z_{k-n} \vert \right)\frac{1}{\omega_k^q},
\]
\[
  \tilde R^{[m]}_{n_y}(\overline U_0) 
  \bydef  \frac{1}{2}\sum_{k=m}^{m+n-1}\left((a_1+\frac{1}{\varepsilon N})\vert \overline x_{k-n} \vert +(3a_1+b_1+\frac{1}{\varepsilon N})\vert \overline y_{k-n} \vert +(b_1+\frac{2}{\varepsilon N})\vert \overline z_{k-n} \vert \right)\frac{1}{\omega_k^q}, 
\]
\[
 \tilde R^{[m]}_{n_z}(\overline U_0) 
  \bydef   \frac{1}{2}\sum_{k=m}^{m+n-1}\left(b_2\vert \overline x_{k-n} \vert +b_2\vert \overline y_{k-n} \vert +(2b_2+2a_2)\vert \overline z_{k-n} \vert \right)\frac{1}{\omega_k^q}. 
\]
Notice that $\tilde R^{[m]}(\overline U_0) $ can be computed in a finite number of operations. According to (\ref{DT}) and (\ref{I-DF}), we have that for all $ V,V' \in B(0,r)$,
\begin{eqnarray*}
\left\vert DT_s( \overline U_s + V)(V') \right\vert &\leq & \left\vert \left( I^{[m]}-J^{(m)}DF_0^{[m]}(\overline U_0^{[m]} )\right)\left(V'^{[m]}\right) \right\vert + s\left\vert J^{[m]}
\begin{pmatrix}
\left(\Delta \dot U \right)^T \\
0
\end{pmatrix} 
V'^{[m]} \right\vert \\
& + & \left\vert J^{(m)}R^{[m]}(\overline U_0,V')\right\vert + \left\vert J \right\vert \left\vert D^2F_s(\overline U_0 )(V) (V') + D^2F_s(\overline U_0 )(s\Delta U) (V')\right\vert.
\end{eqnarray*}
Using expression (\ref{d2f}) and Lemma~\ref{n<m}, we get that for all $ V,V'\in B(0,r) $ and $n<2m-1$,
\begin{equation}
\label{majd2f}
\left\vert \left[D^2F_s(\overline U_0)(V)(V')\right]_n \right\vert \leq \frac{1}{2} \left(C_n^q(K)+\epsilon_n^q(K)\right)
\begin{pmatrix}
\lambda_1\\
\lambda_1\\
\lambda_2
\end{pmatrix}r^2 + \frac{2(\pi n)^2 r^2}{\rho \omega_n^q}
\begin{pmatrix}
1\\
1\\
1
\end{pmatrix},
\end{equation}
with $\displaystyle{\lambda_1=4a_1+2b_1+\frac{4}{\varepsilon N}}$ and $\lambda_2=4b_2+2a_2$. Let us set
\[
\widetilde{DQ}^q_n(K) \bydef 
\frac{1}{2} \left(C_n^q(K)+\epsilon_n^q(K)\right)
\begin{pmatrix}
\lambda_1\\
\lambda_1\\
\lambda_2
\end{pmatrix}.
\]

For $\left\vert \left[D^2F_s(\overline U_0)(s\Delta\overline U)(V')\right]_n \right\vert$, we know explicitly  $\Delta\overline U$ which allows us to compute sharper bounds. Still using (\ref{d2f}), we get that for all $ V'\in B(0,r) $, $s \in [0,1]$ and $n<2m-1$,
\begin{equation}
\label{majd2fbis}
\left\vert \left[D^2F_s(\overline U_0)(s\Delta\overline U)(V')\right]_n \right\vert \leq s\begin{pmatrix} \left[\theta_1(\Delta\overline u)\ast w^q\right]_n \\  \left[\theta_2(\Delta\overline u)\ast w^q\right]_n \\  \left[\theta_3(\Delta\overline u)\ast w^q\right]_n \end{pmatrix}r 
+ s(\pi n)^2 \left( \frac{\vert \Delta\overline d\vert}{\omega_n^q} \begin{pmatrix} 1\\ 1\\1\end{pmatrix}r
+ \frac{ \left[\left\vert \Delta\overline u \right\vert\right]_n }{\rho}r \right),
\end{equation}
where 
\[
\theta_1(u) \bydef (3a_1+b_1+\frac{1}{\varepsilon N}) \vert x \vert + (a_1+\frac{1}{\varepsilon N}) \vert y \vert + (b_1+\frac{2}{\varepsilon N}) \vert z \vert,
\]
\[
\theta_2(u) \bydef (a_1+\frac{1}{\varepsilon N}) \vert x \vert + (3a_1+b_1+\frac{1}{\varepsilon N}) \vert y \vert + (b_1+\frac{2}{\varepsilon N}) \vert z \vert,
\]
\[
\theta_3(u) \bydef b_2 \vert x \vert + b_2 \vert y \vert + (2a_2+2b_2) \vert z \vert
\]
and
\[
w^q \bydef \left(\frac{1}{\omega_0^q},\dots,\frac{1}{\omega_n^q},\dots\right).
\]
Let us also set
\[
\Theta_n^q(u) \bydef \begin{pmatrix} \left[\theta_1(u)\ast w^q\right]_n \\  \left[\theta_2(u)\ast w^q\right]_n \\  \left[\theta_3(u)\ast w^q\right]_n \end{pmatrix}.
\]
Observe that since $\Delta\overline u$ has only a finite number of non-zero coefficients, $\Theta_n^q(\Delta\overline u)$ can be computed in a finite number of operations.

Using all the bounds obtained in this section and the definition of $Z$ in (\ref{ineq1}), we can define $Z_d$ and the $m$ first $Z_{n}$ by
\begin{eqnarray*}
Z^{[m]} &=& \left\vert I^{[m]} - J^{[m]}DF_0^{[m]}(\overline U_0 ^{[m]}) \right\vert \left(W_1^q\right)^{[m]}r + s\left\vert J^{[m]}
\begin{pmatrix}
\left(\Delta \dot U \right)^T \\
0
\end{pmatrix} 
\right\vert \left(W_1^q\right)^{[m]}r + \vert J^{[m]} \vert \tilde R^{[m]}(\overline U_0)r \\ 
& + & \vert J^{[m]} \vert \left( \left(\widetilde{DQ}^q\right)^{[m]}(K)r^2 + \frac{2}{\rho} \left(W_2^q\right)^{[m]} r^2 +  s\left(\Theta^q\right)^{[m]}(\Delta\overline u) r + s\vert \Delta\overline d\vert \left(W_2^q\right)^{[m]} r + \frac{s}{\rho}\Lambda^{[m]}(\Delta \overline u) r\right) ,
\end{eqnarray*}
where
\[
\left(W_1^q\right)^{[m]}=\left(\frac{1}{\rho},\frac{1}{\omega_0^q},\frac{1}{\omega_0^q},\frac{1}{\omega_0^q},\dots ,\frac{1}{\omega_{m-1}^q},\frac{1}{\omega_{m-1}^q},\frac{1}{\omega_{m-1}^q}\right)^T,
\]
\[
\left(W_2^q\right)^{[m]}=\left(0,\frac{\pi ^2 0^2}{\omega_0^q},\frac{\pi ^2 0^2}{\omega_0^q},\frac{\pi ^2 0^2}{\omega_0^q},\dots ,\frac{\pi ^2 (m-1)^2}{\omega_{m-1}^q},\frac{\pi ^2 (m-1)^2}{\omega_{m-1}^q},\frac{\pi ^2 (m-1)^2}{\omega_{m-1}^q}\right)^T,
\]
\[
\left(\widetilde{DQ}^q\right)^{[m]}(K)=
\begin{pmatrix} 
0 \\ \widetilde{DQ}_0^q(K) \\ \vdots \\ \widetilde{DQ}_{m-1}^q(K)
\end{pmatrix}, \
\\\left(\Theta^q\right)^{[m]}(u)=
\begin{pmatrix}
0 \\ \Theta_0^q(u) \\ \vdots \\ \Theta_{m-1}^q(u)
\end{pmatrix} \mbox{ and }
\Lambda^{[m]}(u)=
\begin{pmatrix}
 0 \\ (\pi0)^2 \vert u_0 \vert \\ \vdots \\ (\pi(m-1))^2 \vert u_{m-1} \vert
\end{pmatrix}.
\]

\subsubsection{\underline{Case \boldmath $m\leq n < M$ \unboldmath}}
\label{Zn}

Let $m\leq n < 2m-1$. According to (\ref{df}) we have that
\[
\left[DF_s(\overline U_0)(V')\right]_n= \underbrace{D_dL_n(\overline U_0)(d')}_{=0} + D_uL_n(\overline U_0)(v') + DQ_n(\overline U_0)(V')
\]
and by definition of $J_n$ in (\ref{Jn}), $J_n\left(D_uL_n(\overline U_0)(v)\right)=v_n$, simplifying (\ref{DT}) into
\begin{eqnarray*}
\left[DT_s( \overline U_s + V)(V')\right]_n &=& \left[\left(I-J \left( DF_s(\overline  U_0) \right) \right)(V')\right]_n - \left[JD^2F_s( \overline U_0)(s\Delta\overline U+V)(V')\right]_n\\
&=& - J_nDQ_n(\overline U_0)(V') - \left[J\left(D^2F_s(\overline U_0)(s\Delta\overline U +V,V')\right)\right]_n .
\end{eqnarray*}
Now using (\ref{d2f}) we get that
\[
\left[DT_s( \overline U_s + V)(V')\right]_n = - J_n\left(DQ_n(\overline U_0 + s\Delta\overline U +V)(V') - \left( \pi n \right)^2 \left((s\Delta\overline d + d)v'_n+d'(s\underbrace{\Delta\overline u_n}_{=0} + v_n)\right) \right).
\]

Using the same bounds as for (\ref{majd2f}) and (\ref{majd2fbis}) we can set
\[
Z_n = \left\vert J_n \right\vert \left(\Theta_n^q(\overline u)r+s\Theta_n^q(\Delta\overline u)r +\widetilde{DQ}_n^q(K) r^2
+ \frac{(\pi n)^2}{\omega_n^q} \left( s\vert\Delta\overline d \vert r + \frac{2 r^2}{\rho} \right)
\begin{pmatrix}1\\1\\1\end{pmatrix}\right).
\]

\subsubsection{\underline{Case \boldmath $n \geq m$ \unboldmath}} \label{sec:Zn_n_ge_M}

We still have 
\[
\left[DT_s( \overline U_s + V)(V')\right]_n = - J_n\left(DQ_n(\overline U_0 + s\Delta\overline U +V)(V') - \left( \pi n \right)^2 \left((s\Delta\overline d + d)v'_n+d' v_n\right) \right),
\]
but here we bound the convolution products in $DQ_n$ using Lemma~\ref{n>m} to get, for all $ n \geq M$,
\[
\left\vert \left[DT_s( \overline U_s + V)(V')\right]_n \right\vert \leq \left\vert J_n \right\vert \left(\frac{1}{2}\frac{\alpha_M^q(K)}{\omega_n^q} \left(\left\Vert\overline u \right\Vert_q r+s\left\Vert\Delta\overline u\right\Vert r + r^2\right) \begin{pmatrix}\lambda_1\\\lambda_1\\\lambda_2\end{pmatrix}
+ \frac{(\pi n)^2}{\omega_n^q} \left( s\vert\Delta\overline d \vert r + \frac{2 r^2}{\rho} \right)
\begin{pmatrix}1\\1\\1\end{pmatrix}\right).
\]
Then we set 
\[
\hat Z_M = \left\vert J_M \right\vert \left(\frac{1}{2}\frac{\alpha_M^q(K)}{\omega_M^q} \left(\left\Vert\overline u \right\Vert_q r+s\left\Vert\Delta\overline u\right\Vert r + r^2\right) \begin{pmatrix}\lambda_1\\\lambda_1\\\lambda_2\end{pmatrix}
+ \frac{(\pi M)^2}{\omega_M^q} \left( s\vert\Delta\overline d \vert r + \frac{2 r^2}{\rho} \right)
\begin{pmatrix}1\\1\\1\end{pmatrix}\right),
\]
and since the terms of $\left\vert J_n \right\vert$ and $(\pi n)^2\left\vert J_n \right\vert$ are decreasing for $n\geq \max\left(\sqrt{\frac{r_1}{\pi^2d}},\sqrt{\frac{r_2}{\pi^2d}}\right)$ ($\approx 11$ with the values of the parameter taken here, $m$ is always taken such that $2m-1>11$), we can set, for all $ n\geq M$,
\begin{equation}
\label{ZM}
Z_n=\hat Z_M\frac{\omega_M^q}{\omega_n^q}.
\end{equation}

Notice that (\ref{ZM}) is what allows us to check the hypotheses of Theorem~\ref{th_local} with finite computations so it is really crucial for the proof, and we are able to do this thanks to the fact that the terms of $\left\vert J_n \right\vert$ are decreasing, which happens because the magnitude of the eigenvalues of the linear part of $f$ are growing in $(\pi n)^2$. In fact, we would have (\ref{ZM}) for every system whose equations can be written as the sum of a linear operator with eigenvalues of increasing magnitude and a non linear polynomial term (in particular for reaction-diffusion systems).

\subsection{Explicit computation of the radii polynomials}
\label{I}

Now according to Section~\ref{sec:def_pol} and using the bounds $Y$ and $Z$ we got in the two previous sections, we define the radii polynomials. Notice that $Y$ does not depend on $r$ and that $Z$ has linear and quadratic terms in $r$, so the radii polynomials are all of degree two.

\subsubsection{\underline{Case \boldmath $n<m$ \unboldmath}}
Let us set
\[
a^{[m]}(s)=\vert J^{[m]} \vert \left(\left(\widetilde{DQ}^q\right)^{[m]}(K) + \frac{2}{\rho} \left(W_2^q\right)^{[m]} \right),
\]
\begin{eqnarray}
\label{bm}
b^{[m]}(s) &=& \left\vert I^{[m]} - J^{[m]}DF_0^{[m]}(\overline U_0^{[m]}) \right\vert \left(W_1^q\right)^{[m]} + \vert J^{[m]} \vert \tilde  R^{[m]}(\overline U_0) + s\left\vert J^{[m]}
\begin{pmatrix}
\left(\Delta \dot U \right)^T \\
0
\end{pmatrix} 
\right\vert \left(W_1^q\right)^{[m]}  \nonumber \\
& +&  s\vert J^{[m]} \vert \left( \left(\Theta^q\right)^{[m]}(\Delta\overline u)  + \vert \Delta\overline d\vert \left(W_2^q\right)^{[m]}  + \frac{1}{\rho} \Lambda^{[m]}(\Delta \overline u) \right)- \left(W_1^q\right)^{[m]},
\end{eqnarray}
and
\[
c^{[m]}(s)=Y^{[m]}(s).
\]
Then we define
\[
P_d(r,s)=a^{[m]}_d(s)r^2 + b^{[m]}_d(s)r + c^{[m]}_d(s) 
\]
and for all $n<m$,
\[
P_n(r,s)=a^{[m]}_n(s)r^2 + b^{[m]}_n(s)r + c^{[m]}_n(s).
\]

\subsubsection{\underline{Case \boldmath $m\leq n< M$ \unboldmath}} \label{sec:rad_polyCase_m_le_n_le_ M}

For $m\leq n< 2m-1$,  define
\begin{equation}
\label{a}
a_n(s)=\left\vert J_n \right\vert \left( \widetilde{DQ}_n^q(K)+
\frac{2(\pi n)^2}{\rho \omega_n^q} 
\begin{pmatrix}
1\\
1\\
1
\end{pmatrix}\right),
\end{equation}
\begin{equation}
\label{b}
b_n(s)=\left\vert J_n \right\vert \left(\Theta_n^q(\overline u)+s\Theta_n^q(\Delta\overline u)\right)
+ \left(s(\pi n)^2\vert\Delta\overline d \vert  \left\vert J_n \right\vert
\begin{pmatrix}
1\\
1\\
1
\end{pmatrix}
- 
\begin{pmatrix}
1\\
1\\
1
\end{pmatrix}\right)\frac{1}{\omega_n^q},
\end{equation}
and
\[
c_n(s)=Y_n(s).
\]

Then for each $m\leq n<M$, we define
\[
 P_n(r,s)=a_n(s)r^2 + b_n(s)r + c_n(s).
\]

\subsubsection{\underline{Case \boldmath $n=M$ \unboldmath}}

We have $Y_M=0$ (from the choice of $M=2m-1$ as explained in Section~\ref{Y}) so $c_M=0$. Setting
\[
a_M(s)=\left\vert J_M \right\vert \left( \frac{1}{2}\frac{\alpha_M^q}{\omega_M^q} 
\begin{pmatrix}
c_1\\
c_1\\
c_2
\end{pmatrix} +
\frac{2(\pi M)^2}{\rho \omega_M^q} 
\begin{pmatrix}
1\\
1\\
1
\end{pmatrix}\right)
\]
and
\begin{equation}
\label{bqueue}
b_M(s)=\left(\frac{1}{2}\alpha_M^q \left( \left\Vert\overline u\right\Vert_q + s\left\Vert\Delta\overline u\right\Vert_q\right) \left\vert J_M \right\vert
\begin{pmatrix}
c_1\\
c_1\\
c_2
\end{pmatrix} 
+ s(\pi M)^2 \vert\Delta\overline d \vert \left\vert J_M \right\vert
\begin{pmatrix}
1\\
1\\
1
\end{pmatrix}
- 
\begin{pmatrix}
1\\
1\\
1
\end{pmatrix}\right)\frac{1}{\omega_M^q},
\end{equation}
we can define
\[
P_M(r,s)=a_M(s)r^2 + b_M(s)r=a_M(s)r\left(r+\frac{b_M(s)}{a_M(s)}\right).
\]

\subsubsection{Procedure to find (if possible) \boldmath $r>0$ \unboldmath satisfying (\ref{ineq4})}

To verify hypothesis (\ref{ineq4}) of Theorem~\ref{th_local}, we use Lemma~\ref{nb_fini}. More explicitly we look for $r>0$ such that, for all $ s\in [0,1]$, $P_d(r,s)<0$ and $P_n(r,s)<0$ for all $n\leq M$. Notice that the coefficients of the radii polynomials are increasing with $s$ so that it is equivalent to find $r>0$ such that 
\begin{equation}
\label{P(r)}
P_d(r,1)<0 \quad \mbox{and} \quad P_n(r,1)<0, ~~ \mbox{for all~} n\leq M.
\end{equation}
To find such $r$, we set
\begin{equation} \label{eq:I_n}
I_d \bydef \lbrace r>0 \ | \ P_d(r,1)<0 \rbrace \quad \mbox{and} \quad I_n \bydef \lbrace r>0 \ | \ P_n(r,1)<0 \rbrace, ~~ \mbox{for all~} n\leq M.
\end{equation}
Then we determine numerically an approximation of 
\begin{equation} \label{eq:I}
I \bydef I_d\cap \left(\bigcap_{n=0}^M I_n \right).
\end{equation}
If $I\neq \emptyset$, we choose $r \in I$, and check (\ref{P(r)}) rigorously, by computing the coefficients of the radii polynomials with $s=1$ using interval arithmetic. We see in Section~\ref{opt} why it is reasonable to hope that such $r$ exists, provided the parameters $\Delta_s$, $m$ and $q$ are chosen carefully. 

\section{Optimization of the parameters} \label{sec:parameter_opt}

\subsection{Optimal choice of the parameters  \boldmath  $m$ \unboldmath and \boldmath $\Delta_s$ \unboldmath}
\label{opt}

Recall that the parameter $m$ controls the dimension (which equals $3m$) of the finite dimensional Galerkin projection given by \eqref{f^m}, and from \eqref{eq:predictor}, the parameter $\Delta_s$ is used to define a predictor $\hat U_0 =  \overline U_0 + \Delta_s \dot U_0$ whose value serves as an initial point for Newton's method to get a corrector $\overline U_1$. The value of  $\Delta_s$ represents roughly the arc length of curve we are covering in one predictor-corrector step, hence the name {\em pseudo-arclength} continuation. Since $\Delta \overline U = \overline U_1 - \overline U_0$, if $\Delta_s$ is not so large, then its value should be close to the length $\| \Delta \overline U \|$ of the segment $[\overline U_0 , \overline U_1]$. Hence, studying $\Delta_s$ is roughly the same as studying the length of $\Delta \overline U$. 

The optimal strategy aims at maximizing the pseudo-arclength parameter $\Delta_s$ to prove existence of long pieces of solution curve in one predictor-corrector step (as explained in Section~\ref{sec:pred-cor}) while taking $m$ as small as possible in order to minimize the computational cost. However, the fact that we are looking for an $r$ verifying the hypotheses of Theorem~\ref{th_local} (which is equivalent to find $r>0$ for which every radii polynomial is negative) leads to some constraints.

Let us now remark that for a quadratic polynomial of the form $P(r)=ar^2+br+c$, if $a>0$, $b<0$ and $c>0$ is small enough \big(precisely $c<\frac{b^2}{4a}$\big), then there is an interval $[r_{min},r_{max}] \subset (0,\infty)$ such that for all $ r \in [r_{min},r_{max}]$, $P(r)<0$. Based on this fact, let us study in details the coefficients of each radii polynomial to see how to  choose $m$ and $\Delta_s$ optimally. Since we showed in Section~\ref{I} that we could bound the polynomials letting $s=1$, we always set from now $s=1$.

\subsubsection{\underline{Case \boldmath $n<m$ \unboldmath}}

Recall the definition of the coefficients $a^{[m]}$, $b^{[m]}$ and $c^{[m]}$ of  the radii polynomials for the case $n<m$. First notice that each component of $a^{[m]}$ and $c^{[m]}$ is positive. In the definition of $b^{[m]}$, the first two terms are very small (the first by definition of $J$, and the second provided $m$ is not too small, which will always be the case), and the next two can be made as small as needed (and so $b^{[m]}$ will be negative) by taking $\Delta_s$ small enough. $c^{[m]}$ can also be made very small by taking $\Delta_s$ small (see Section~\ref{Ym}). Hence we can expect each set $I_n$ (for $n < m$) defined in Section~\ref{I} to be non empty if $\Delta_s$ is small enough (the same is true for $I_d$).

\subsubsection{\underline{Case \boldmath $m \leq n < M$ \unboldmath}} 

Recall the definition in Section~\ref{sec:rad_polyCase_m_le_n_le_ M} of the coefficients $a_n$, $b_n$ and $c_n$ of  the radii polynomial $P_n(r)$ for the case $m \leq n < M$. According to the previous section, $\Delta_s$ should not be too large and hence here again $c_n$ should be small. Also, the predominant term of $b_n$ in (\ref{b}) is $\left\vert J_n \right\vert \Theta_n^q(\overline u)$ which decreases to $0$ as $n$ grows, so taking $m$ large enough should allow us to have $b_n$ negative for $n\geq m$ and thus, recalling \eqref{eq:I_n}, we can expect that $I_n \neq \emptyset$ (for $m \leq n < M$).

\subsubsection{\underline{Case \boldmath $n=M$ \unboldmath}}

The situation in the case $n=M=2m-1$ is the same as above except that the expression of $b_M$ in (\ref{bqueue}) is different, with $m$ large enough we can expect $I_M$ to be non empty. 

\subsubsection{Algorithm to choose \boldmath $\Delta_s$ \unboldmath  and \boldmath $m$ \unboldmath optimally} \label{sec:algorithm}
 
Let us now present an algorithm to chose $\Delta_s$  and $m$ optimally. 
Given $\overline U_0$, $\dot U_0$, $\Delta_s $ and $m$,
\begin{enumerate}
\item \label{alg:step1} Compute $\overline U_1$, $\dot U_1$ and $I$ given by \eqref{eq:I};
\item If for some $n<m$ $I_n = \emptyset$, take $\Delta_s$ smaller and go back to Step~\ref{alg:step1};
\item If for some $m\leq n \leq M$ $I_n = \emptyset$, take $m$ larger and go back to Step~\ref{alg:step1};
\item If $I=\emptyset$, take $\Delta_s$ smaller and go back to Step~\ref{alg:step1};
\item Start a new predictor-corrector step from $\overline U_1$ and $\dot U_1$ with $\Delta_s$ larger and $m$ smaller.
\end{enumerate}

The fact that we try to increase $\Delta_s$ and decrease $m$ after each successful step is not optimal. We indeed observed numerically that the process often failed. In fact we noticed that the value of $\max I$ has to reach some threshold before $\Delta_s$ could be increased successfully. Similarly $d$ has to reach some other threshold before $m$ could be decreased successfully. In practice we only try to increase $\Delta_s $ or decrease $m$ if those thresholds are reached. 

Note that we want to change the value of $m$ along the process while conserving the smoothness property of the global curve. The important fact is to have exactly the same function $F_s$ given by \eqref{F_s} at the end of one piece of curve and at the start of the next, that is $F^{(0)}_1=F^{(1)}_0$ with the notations of Section~\ref{sec:recollement},  and this even when we change the value of $m$ between the two. Let us denote $m^{(0)}$ (resp. $m^{(1)}$) the value of $m$ for the first curve (resp. the second). In $F_s$, only $E_s$ changes with $m$. 
Hence, we only need to check that $E^{(0)}_1=E^{(1)}_0$. Remember that $\overline U_s$ and $\dot U_s$ are constructed from finite dimensional vectors and completed with zeros. In fact, $E^{(0)}_1(U) = \left(U^{\left[m^{(0)}\right]}-\overline U_1^{\left[m^{(0)}\right]}\right)\cdot \dot U_1^{\left[m^{(0)}\right]}$ and $E^{(1)}_0(U) = \left(U^{\left[m^{(1)}\right]}-\overline U_1^{\left[m^{(1)}\right]}\right)\cdot \dot U_1^{\left[m^{(1)}\right]}$. If we increase $m$, that is $m^{(1)}>m^{(0)}$, the last $m^{(1)}-m^{(0)}$ frequencies of $\dot U_1^{\left[m^{(1)}\right]}$  \Big (that is $ \left[\dot U_1\right]_n$, for $n\in \{m^{(0)}+1,\dots,m^{(1)}\}$\Big) are zeros and hence $E^{(0)}_1$ is equal to $E^{(1)}_0$. Decreasing $m$, that is $m^{(1)}<m^{(0)}$, requires a bit of care since the last frequencies of $\dot U_1^{\left[m^{(0)}\right]}$ are not necessarily zero. Hence we have to make an intermediate step. Let us denote $\overline U_0^{\left[m^{(0)}\right]}$ the last point we have and $\dot U_0^{\left[m^{(0)}\right]}$ a tangent vector at this point. From there, we make a new predictor-corrector step with $m$ equal to $m^{(0)}$, get a point $\overline U_1^{\left[m^{(0)}\right]}$ and then compute $\dot U_1^{\left[m^{(0)}\right]}$. Before doing the proof of this portion of curve, we set the $m^{(0)}-m^{(1)}$ frequencies of $\overline U_1^{\left[m^{(0)}\right]}$ and $\dot U_1^{\left[m^{(0)}\right]}$ to zero. Then we are able to decrease $m$ at the next step while having exactly the same $F_s$ at the connecting point. 

\subsection{Optimal choice of the decay rate parameter \boldmath  $q$ \unboldmath } \label{sec:q}

In light of Lemma~\ref{lemma:equivalence}, computing steady state of \eqref{eq:mimura_system} with Neumann boundary conditions is equivalent to find $U \in \Omega_{q}$ such that $f(U)=0$, and that for any fixed $q >1$. Hence, it seems legitimate to investigate which decay rate $q$ is optimal. The weight $\omega_n^q$ defined in \eqref{e:weights} depends on $q$ and it influences the value of the norm $\left\Vert\cdot\right\Vert_q$ and the theoretical bounds of Lemma~\ref{n>m}. Since the value of $q$ has a major impact on the radii polynomials, it influences strongly the values of the parameters $\Delta_s$ and $m$ used for the proof, and therefore it influences  the computational time required to perform the proofs. As one can see in Table~\ref{fig:computational_cost}, depending on the value of $q$, the computational costs required to prove the red branches of Figure~\ref{A1} and Figure~\ref{B1} can change drastically. In Figure~\ref{fig:Ds_m_as_functions_of_q}, one can see how $q$ influences the values of $\Delta_s$ and $m$ while performing the algorithm of Section~\ref{sec:algorithm} along the first bifurcation branch presented in red in Figure \ref{A1}, and along the first part of the second bifurcation branch presented in red in Figure \ref{B1}. 
\begin{table}[h]
\begin{center}
{\small
\begin{tabular}{|c|c|c|c|}
\hline $q$ & Red branch of Figure~\ref{A1} & Red branch of Figure~\ref{B1}  \\ 
\hline $1.2$ & $515$ & $528$ \\ 
\hline $1.5$ & $321$ & $393$ \\ 
\hline $2$ & $276$ & $408$ \\ 
\hline $3$ & $460$ & $2256$ \\ 
\hline 
\end{tabular}
} 
\caption{\small Computational cost (in seconds), as a function of the decay rate parameter $q$, required to compute the radii polynomials and find an $r>0$ at which they are all simultaneously negative. These computations were done without interval arithmetic.}
\label{fig:computational_cost}
\end{center}
\end{table}
\begin{figure}[h]
\begin{center}
\begin{minipage}[c]{0.4 \linewidth}\centering
\subfigure[Values of $\Delta_s$ along the first branch]{\includegraphics[width=6cm]{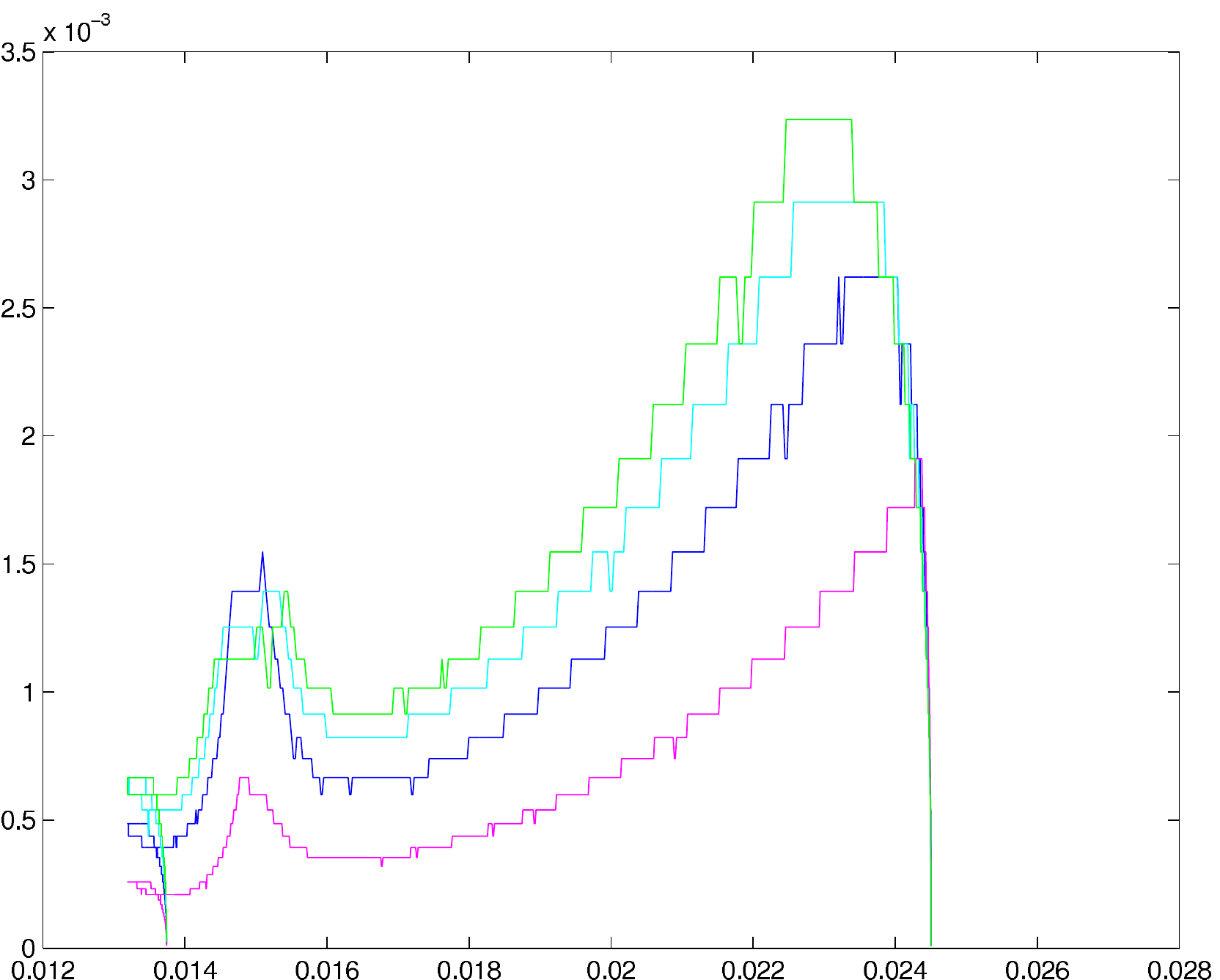}} \\
\subfigure[Values of $m$ along the first branch.]{\includegraphics[width=6cm]{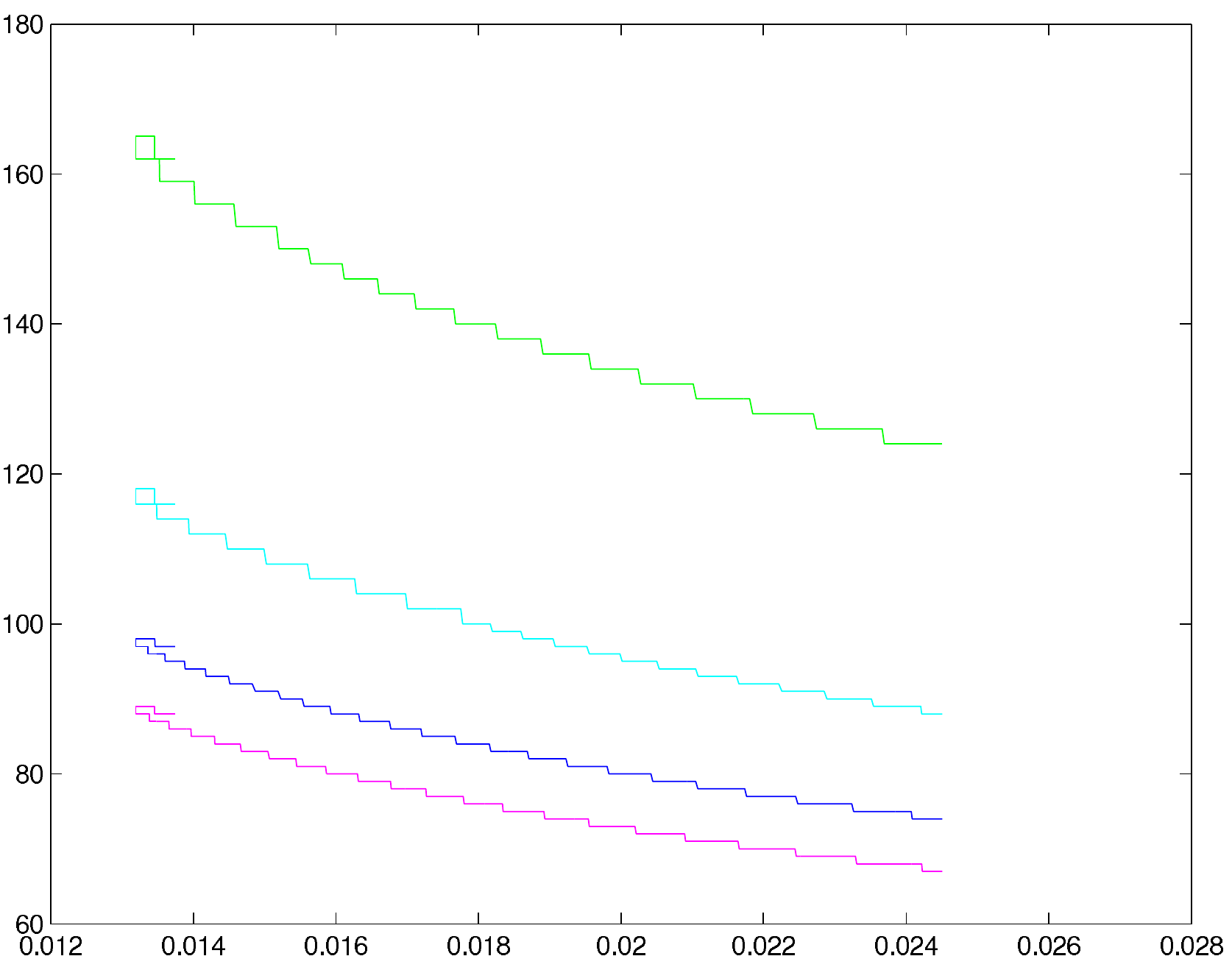}} 
\end{minipage}
\begin{minipage}[c]{0.4 \linewidth}\centering
\subfigure[Values of $\Delta_s$ along the second branch.]{\includegraphics[width=6cm]{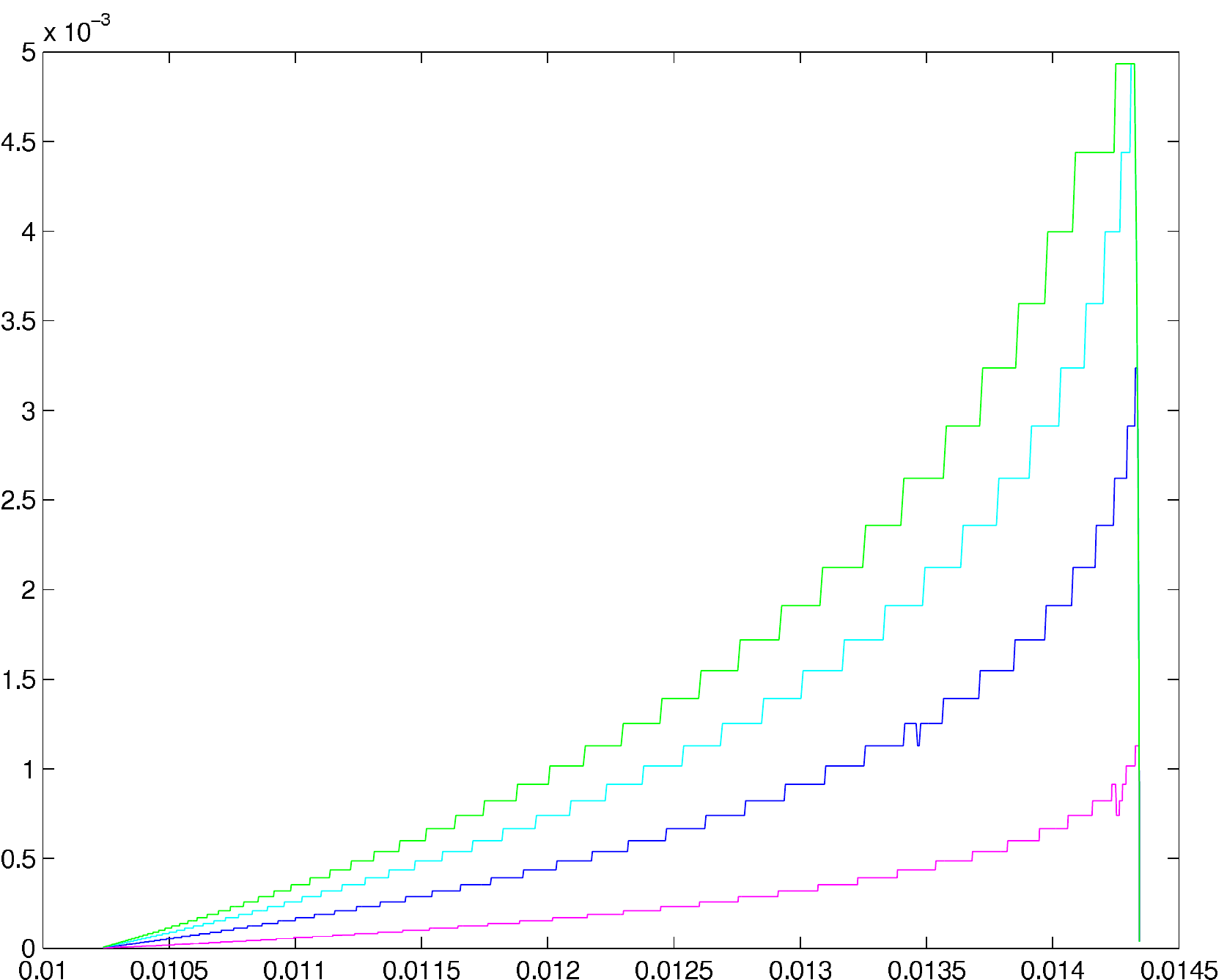}} \\
\subfigure[\label{compB1} Values of $m$ along the second branch]{\includegraphics[width=6cm]{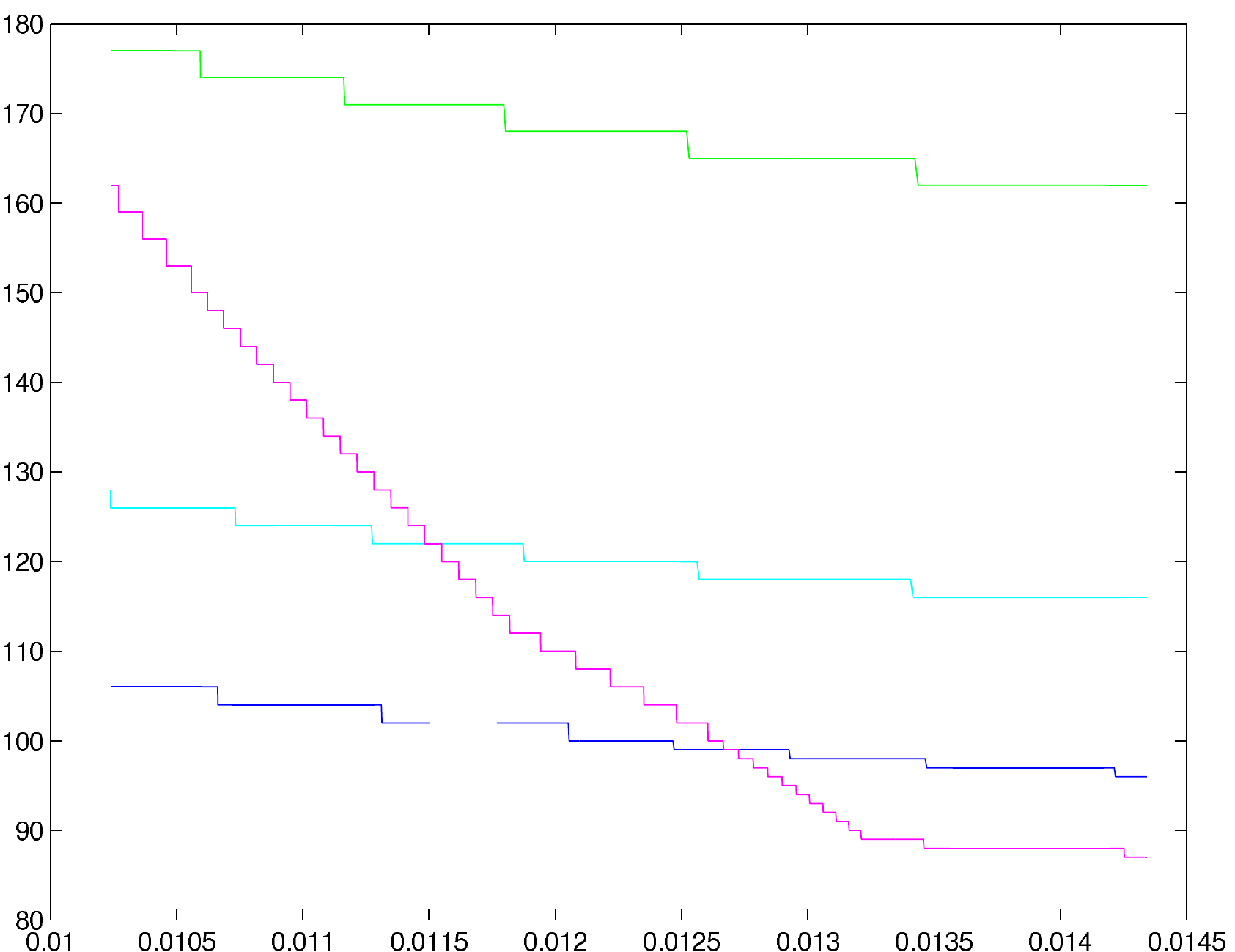}} 
\end{minipage}
\vspace{-.3cm}
\caption{\small Influence of the decay rate parameter $q$ on the parameters $\Delta_s$ and $m$, accordingly to the Algorithm of Section~\ref{sec:algorithm}. {\bf On the left:} results for the branch in red of Figure~\ref{A1}. {\bf On the right:} results for the branch in red of Figure~\ref{B1}. Green corresponds to $q=1.2$, cyan corresponds to $q=1.5$, blue corresponds to $q=2$ and magenta corresponds to $q=3$.}
\label{fig:Ds_m_as_functions_of_q}
\end{center}
\end{figure}
What we see with these comparisons is that, on the red branch of Figure~\ref{A1}, taking $q$ smaller allows using a greater $\Delta_s$ but at the expense of a greater $m$. Hence, a better compromise seems to be somewhere between $q=1.5$ and $q=2$. But it seems that when $d$ becomes small, it could also be better to take $q$ smaller in order to use a smaller $m$ (see the case $q=3$ on Figure~\ref{compB1}). Let us see how this evolves when $d$ becomes even smaller. Note that we were able to get the rigorous data (for different values of $q$) of Table~\ref{fig:computational_cost} and of Figure~\ref{fig:Ds_m_as_functions_of_q} because the rigorous computations of the concerned branches did not take too long. However for some other curves, the proof takes much longer, mainly because $d$ gets smaller and hence $m$ must be taken larger for the coefficients $b_M$ of the last radii polynomial (\ref{bqueue}) to be negative. On the other hand, it only takes one point along the branch to determine, given $q$, which value of $m$ should be taken in order to make $b_M$ negative. Also, the cost of the proof decreases linearly with $\Delta_s$ but increases quadratically with $m$, so it seems fair to chose $q$ according to $m$ only and independently of $\Delta_s$ for those branches where $m$ has to be taken large. In Figure~\ref{complet}, we present some non rigorous results concerning the second part of the second branch. Although non rigorous, these results helps understanding the role played by $q$ for the computational cost of the method. We can see in Figure~\ref{q_opt1} that as the diffusion parameter $d$ decreases, the optimal decay rate $q$ becomes smaller as well. That can be explained by the fact that the coefficient $b_M$ of the last radii polynomial is non negative if $M=2m-1$ is taken too small. A necessary condition for $b_M$ to be negative is roughly the following
\[
\frac{\alpha_M^q \left\Vert u \right\Vert_q}{d(\pi M)^2} < \mbox{some constant},
\]
which shows why $M$ has to be taken larger when $d$ becomes smaller. But to understand the impact of $q$, we have to detail how the values of $\alpha_M^q$ and $\left\Vert u \right\Vert_q$ evolve with $q$. For this let us recall the definitions of $\left\Vert \cdot \right\Vert_q$ in (\ref{norme}) and of $\alpha_M^q$ in (\ref{def_alpha}) and let us look at the shape of the solution $u$. For instance, we see on Figure~\ref{spatial1} that on the first branch the solutions are almost sinusoids of period $2$ and hence $\left\Vert u \right\Vert_q \approx \max\left(\left\vert u_0 \right\vert_{\infty} \omega_0^q,\left\vert u_1 \right\vert_{\infty} \omega_1^q\right)=\max\left(\left\vert u_0 \right\vert_{\infty} ,\left\vert u_1 \right\vert_{\infty} \right)$. Hence, taking $q$ smaller does not decrease the value of $\left\Vert \cdot \right\Vert_q$, but it increases slightly $\alpha_M^q$, and then finally leads to a greater $m$. However, 
when $d$ becomes smaller, the high frequencies in the solutions are more and more important. Thus $\left\Vert u \right\Vert_q$ involves $\left\vert u_n \right\vert_{\infty} \omega_n^q$ for some $n>1$ and hence taking $q$ smaller decreases significantly the value of $\left\Vert \cdot \right\Vert_q$ and therefore the value of $\alpha_M^q \left\Vert u \right\Vert_q/d\pi^2$. This allows taking a smaller $m$, provided that $q$ is not too small. Indeed, $\alpha_M^q\underset{q\rightarrow 1}{\sim} \frac{4}{q-1}$ increases more than $\left\Vert u \right\Vert_q$ decreases when $q$ gets close to $1$, which explains why the optimal value of $q$ seems not to go below $1.3$.  One can see in Figure~\ref{q_opt1} that when $d$ is really small, the value of $m$ is far much smaller with $q=1.2$ than with $q=2$. This is a significant improvement, especially in terms of the computational cost which evolves in $m^3$. Actually, we did some computation, and the cost of the proof to do part of a branch for small $d$ is about $10$ times faster with $q=1.3$ than with $q=2$. This brings us to the conclusion that the new estimates introduced in Section~\ref{Estimes} for $q<2$ can be quite useful. Actually, they allowed proving parts of some branches in Figure~\ref{complet} which we would not have been able to do with $q=2$. 

\begin{figure}[h]
\begin{center}
\hspace{-.5cm}
\begin{minipage}[c]{0.4 \linewidth}\centering
\subfigure[\label{complet}]{\includegraphics[width=8.5cm]{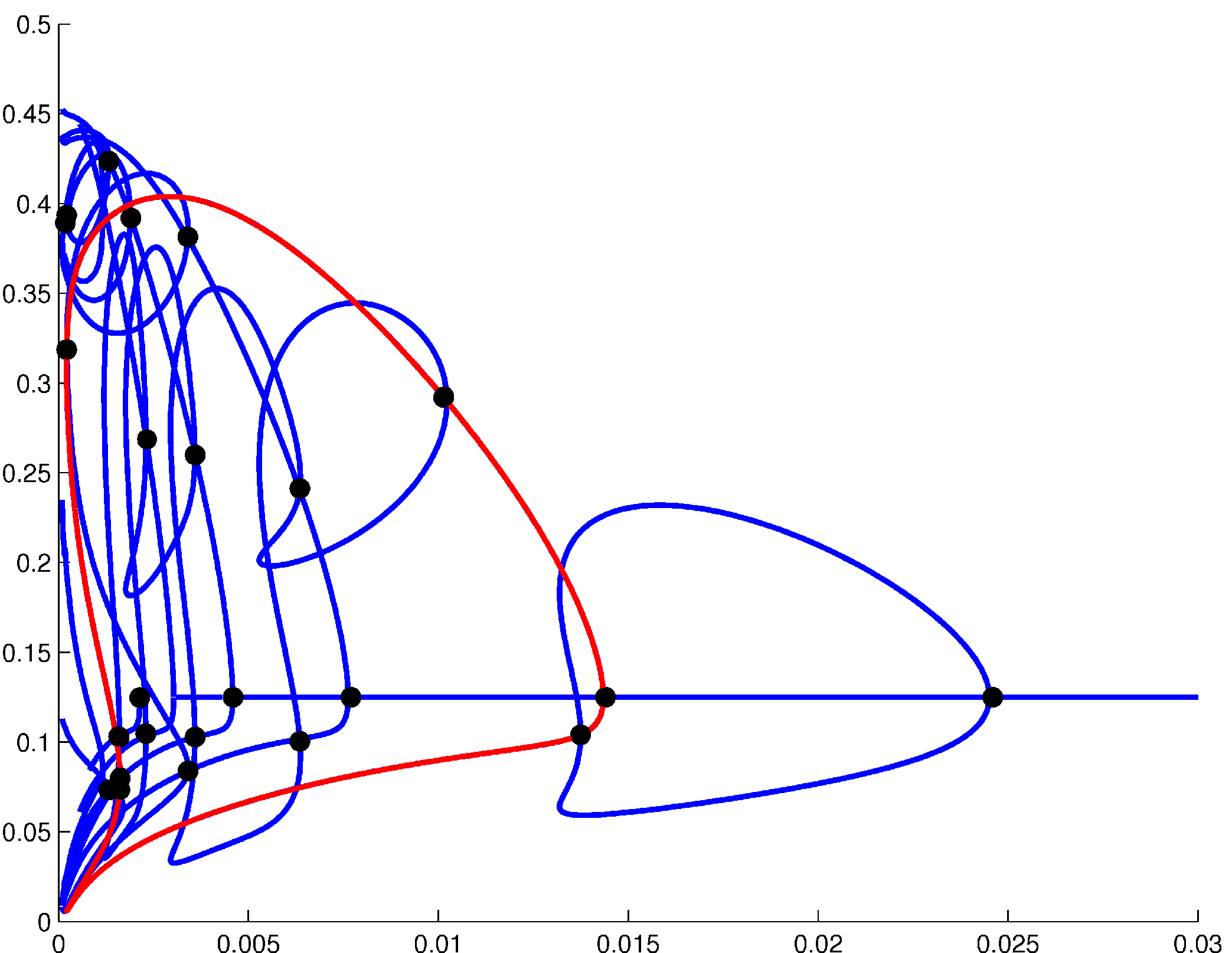}} 
\end{minipage}
\hspace{2.5cm}
\begin{minipage}[c]{0.4 \linewidth}\centering
\subfigure[]{\includegraphics[width=6.5cm]{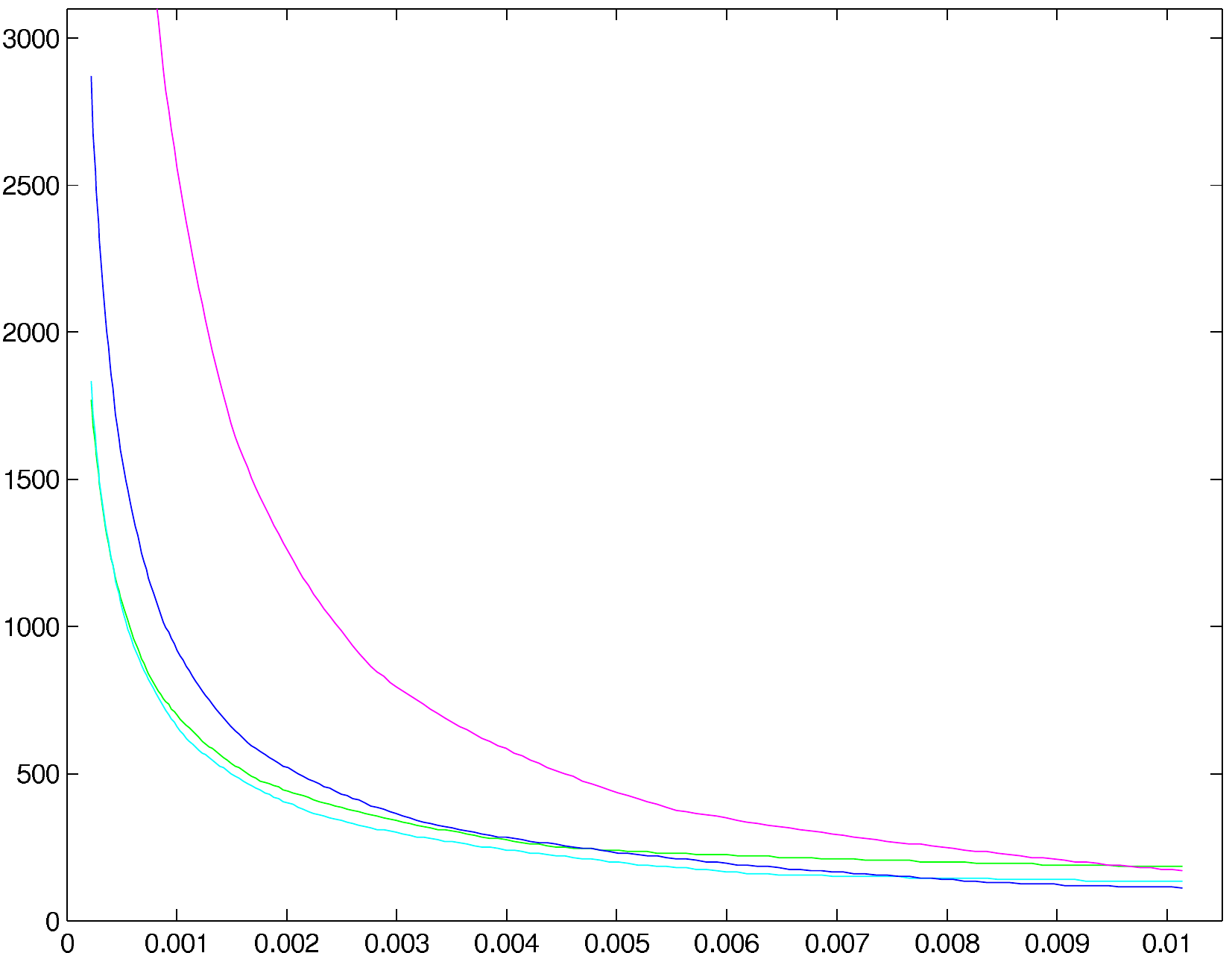}} 
\subfigure[\label{q_opt1}]{\includegraphics[width=6.5cm]{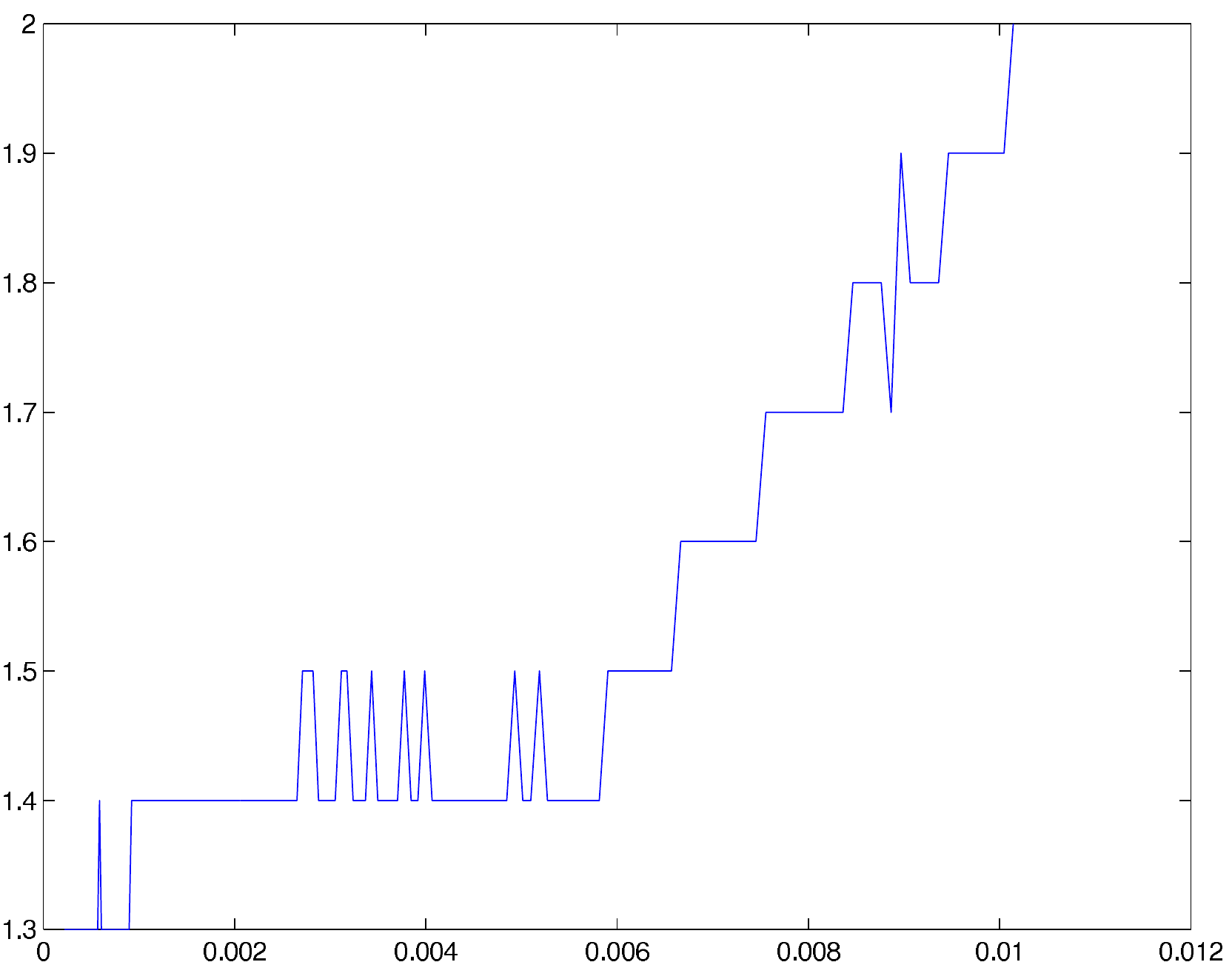}} 
\end{minipage}
\vspace{-.3cm}
\caption{\small (a) A more complete (non rigorous) bifurcation diagram of steady states of \eqref{eq:mimura_system}. (b) Values of $m$ along the branch for different $q$. Green corresponds to $q=1.2$, cyan corresponds to $q=1.5$, blue corresponds to $q=2$ and magenta corresponds to $q=3$. (c) Best values of $q$ to use along the branch as a function of $d$ in order to get $m$ as small as possible.}
\label{espace}
\end{center}
\end{figure}

\begin{figure}[h]
\hspace{-.1cm}
\begin{minipage}[c]{0.4 \linewidth}\centering
\subfigure[The chosen steady states]{\includegraphics[width=8cm]{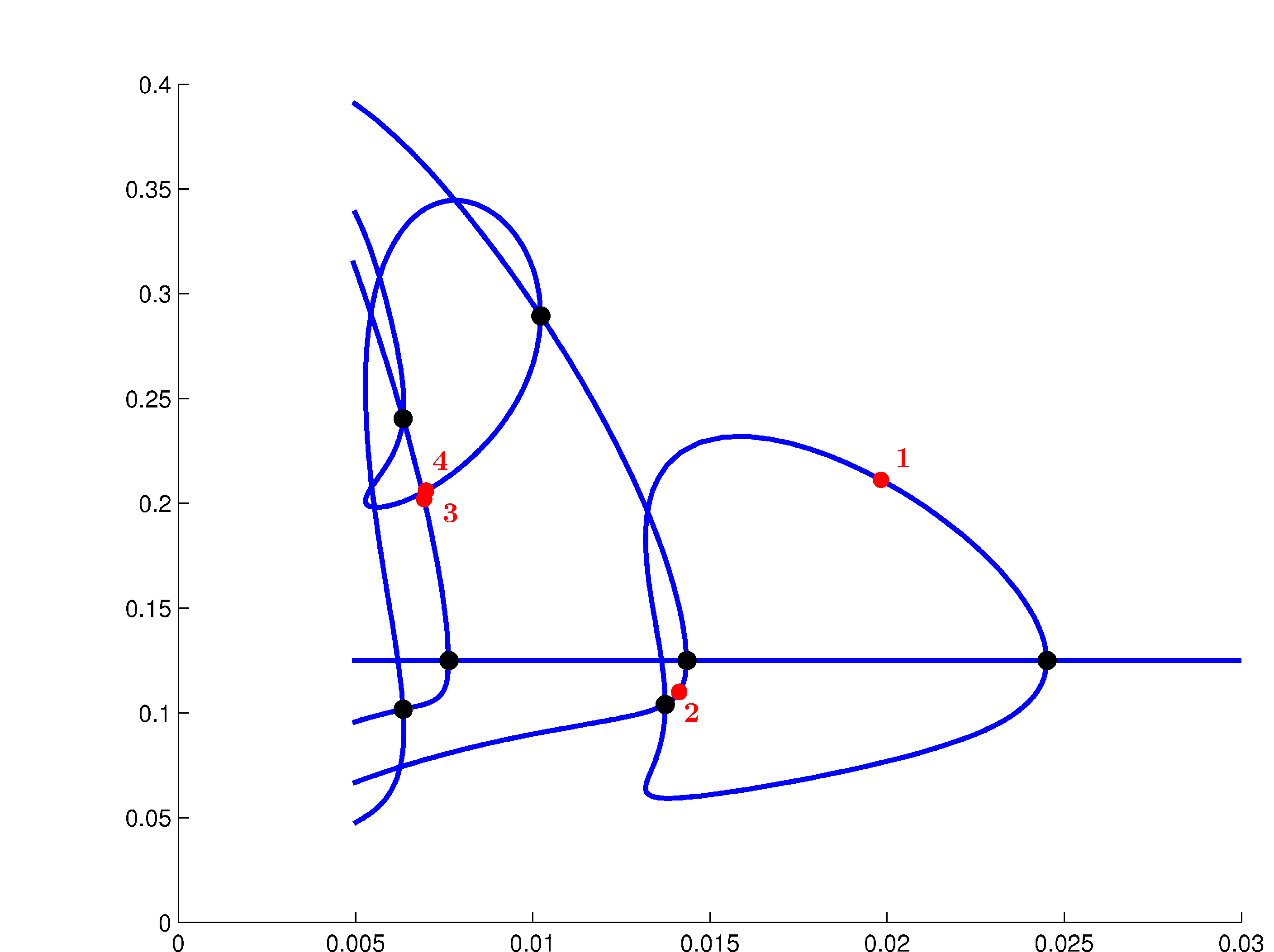}} 
\end{minipage}
\hspace{-.05cm}
\begin{minipage}[c]{0.4 \linewidth}\centering
\subfigure[\label{spatial1} Solution 1]{\includegraphics[width=3cm]{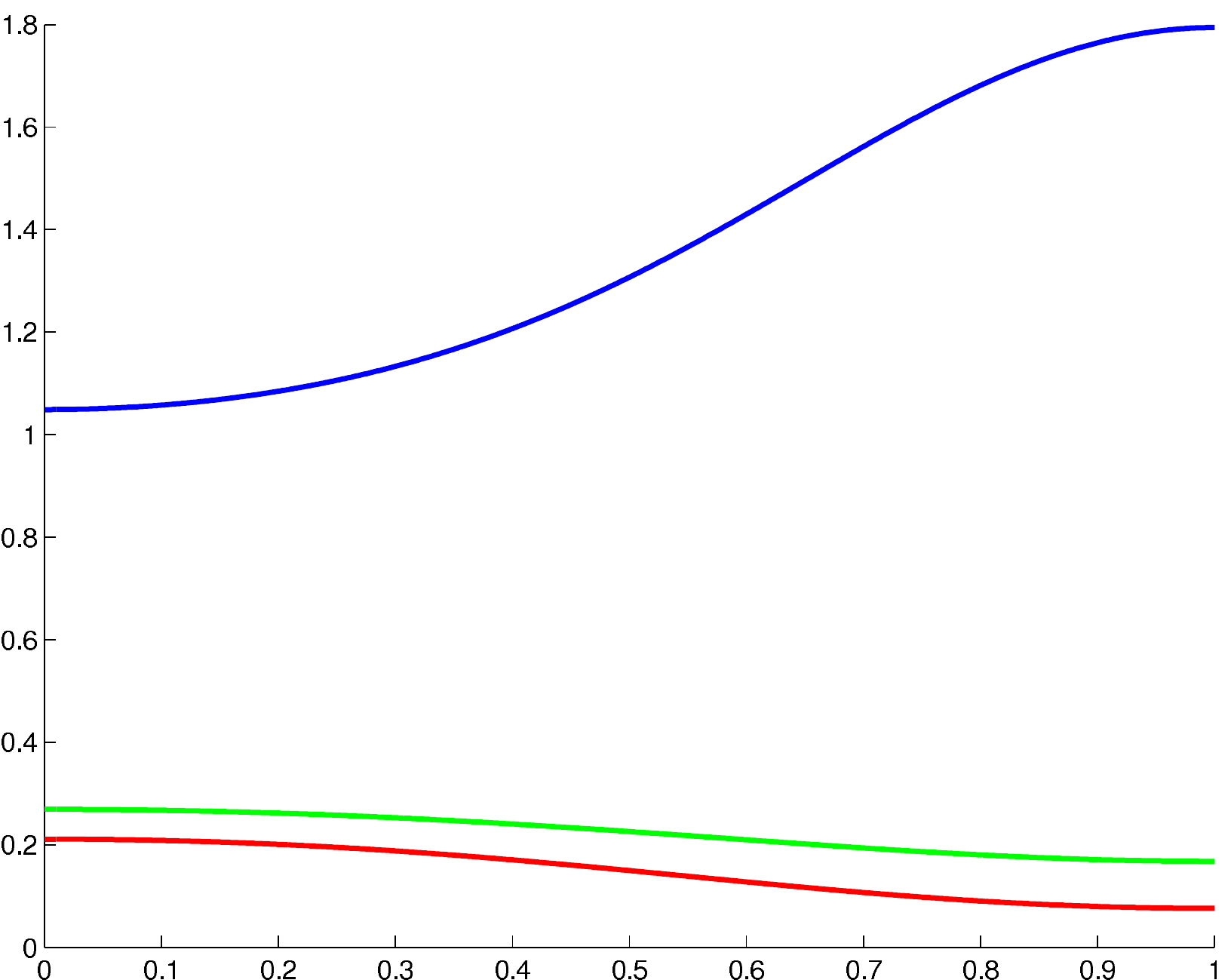}} \\
\subfigure[\label{spatial2} Solution 2]{\includegraphics[width=3cm]{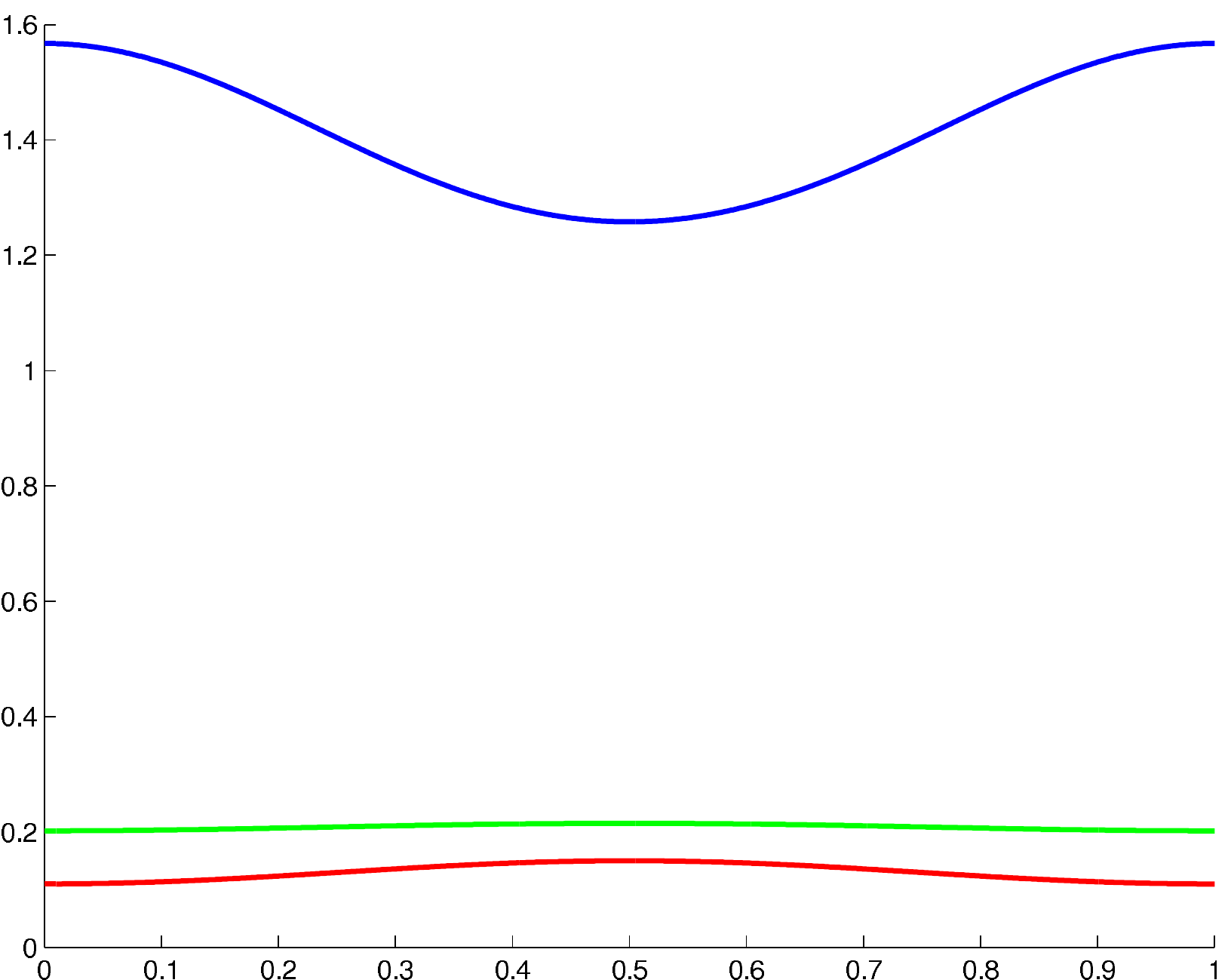}} 
\end{minipage}
\hspace{-3.5cm}
\begin{minipage}[c]{0.4 \linewidth}\centering
\subfigure[\label{spatial3} Solution 3]{\includegraphics[width=3cm]{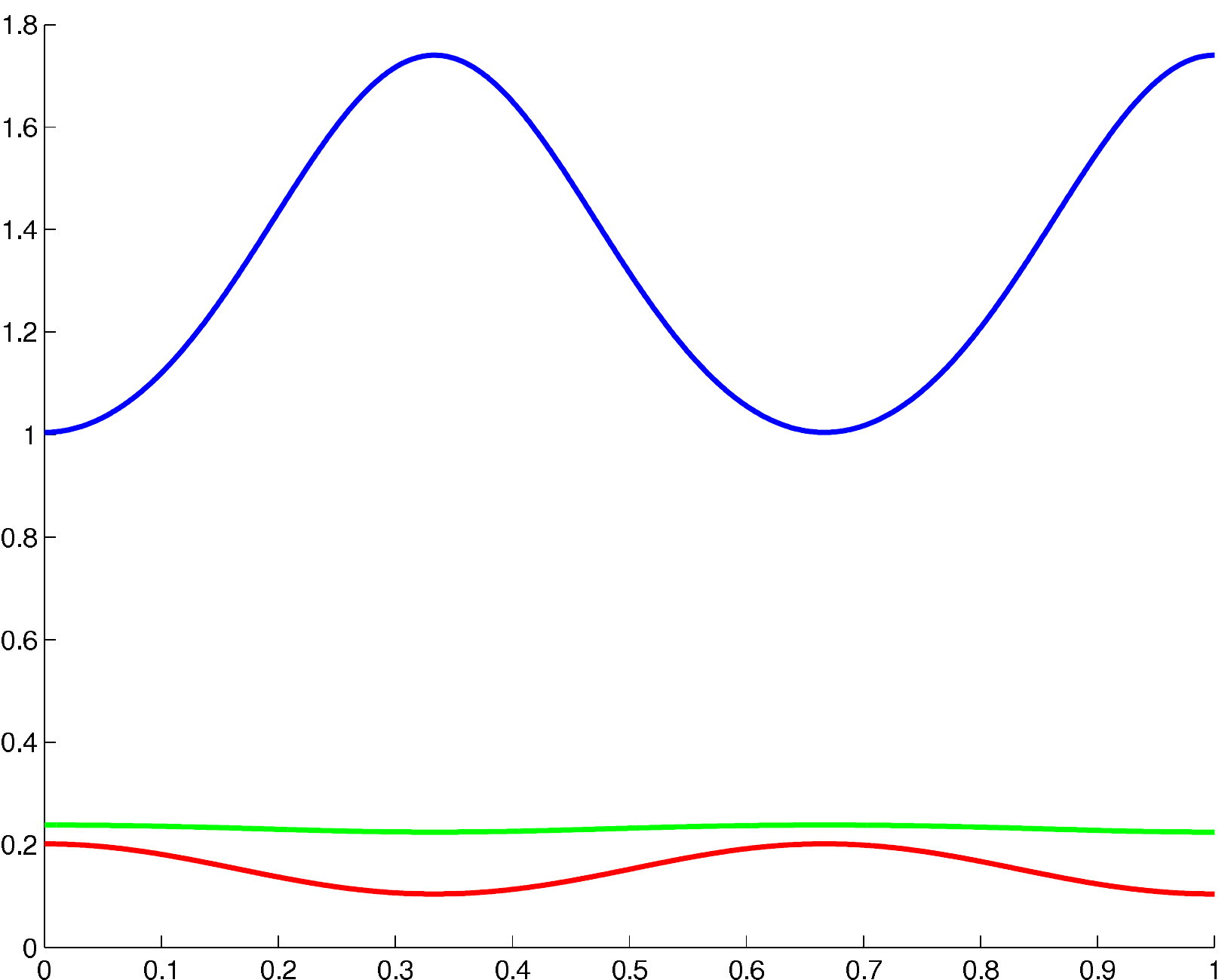}} \\
\subfigure[\label{spatial4} Solution 4]{\includegraphics[width=3cm]{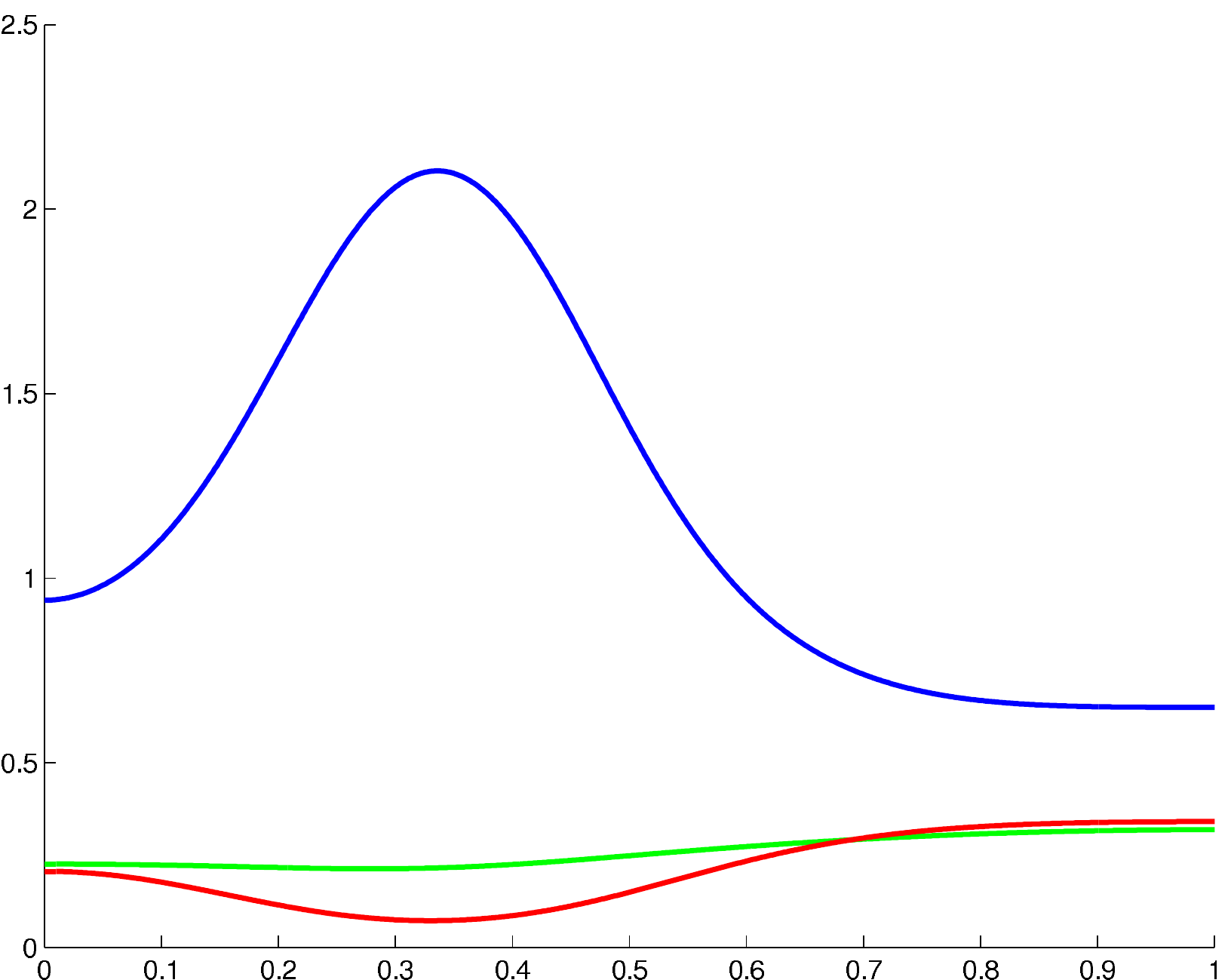}} 
\end{minipage}
\vspace{-.4cm}
\caption{\small Spatial representation of some steady states of Theorem~\ref{thm:bif_diagram} with $x$ blue, $y$ green and $z$ red.}
\label{espace1}
\end{figure}

\begin{figure}[h]
\begin{center}
\begin{minipage}[c]{0.4 \linewidth}\centering
\subfigure[\label{A1}]{\includegraphics[width=6cm]{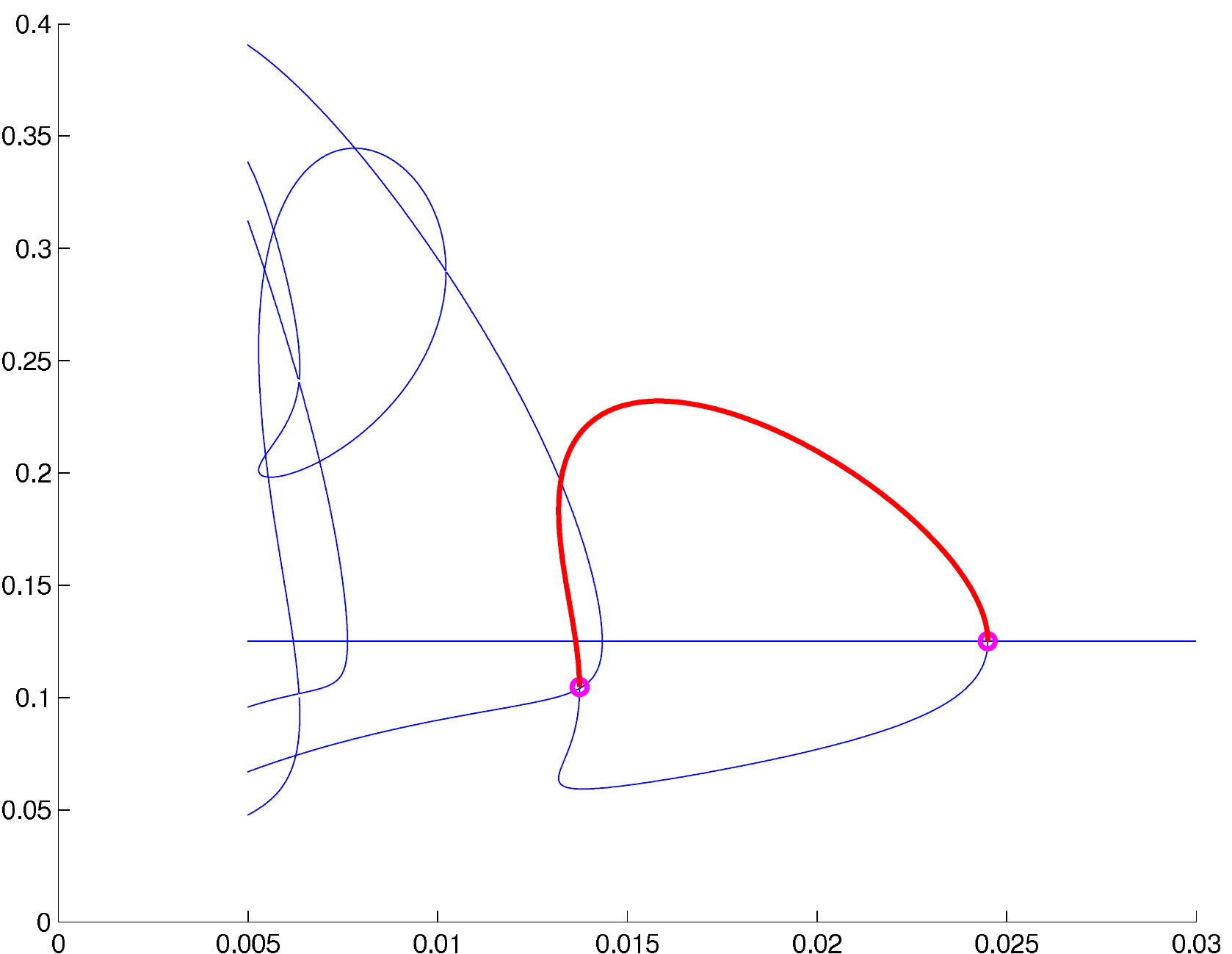}} \\
\vspace{-.4cm}
\subfigure[]{\includegraphics[width=5cm]{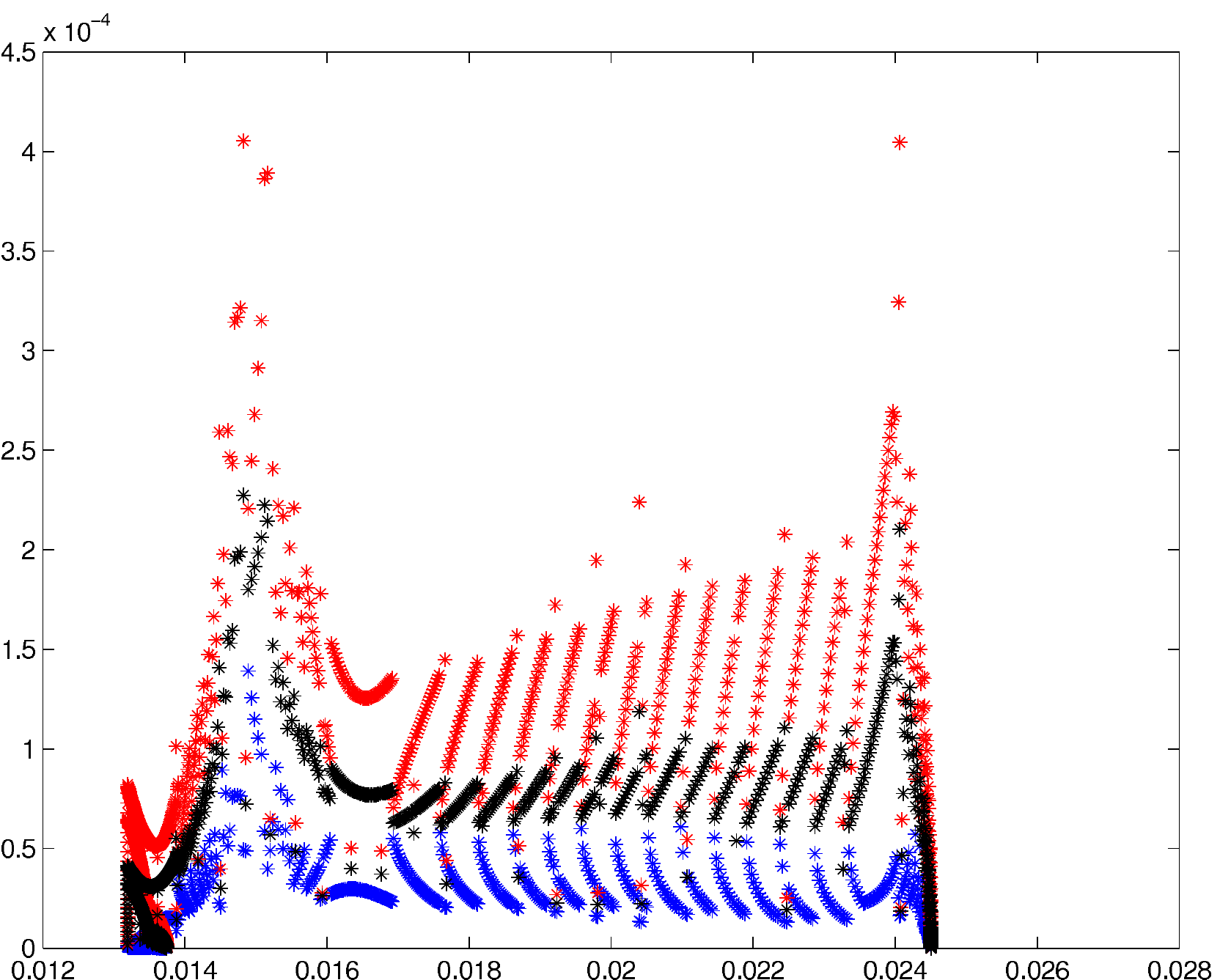}} 
\end{minipage}
\begin{minipage}[c]{0.4 \linewidth}\centering
\subfigure[\label{B1}]{\includegraphics[width=6cm]{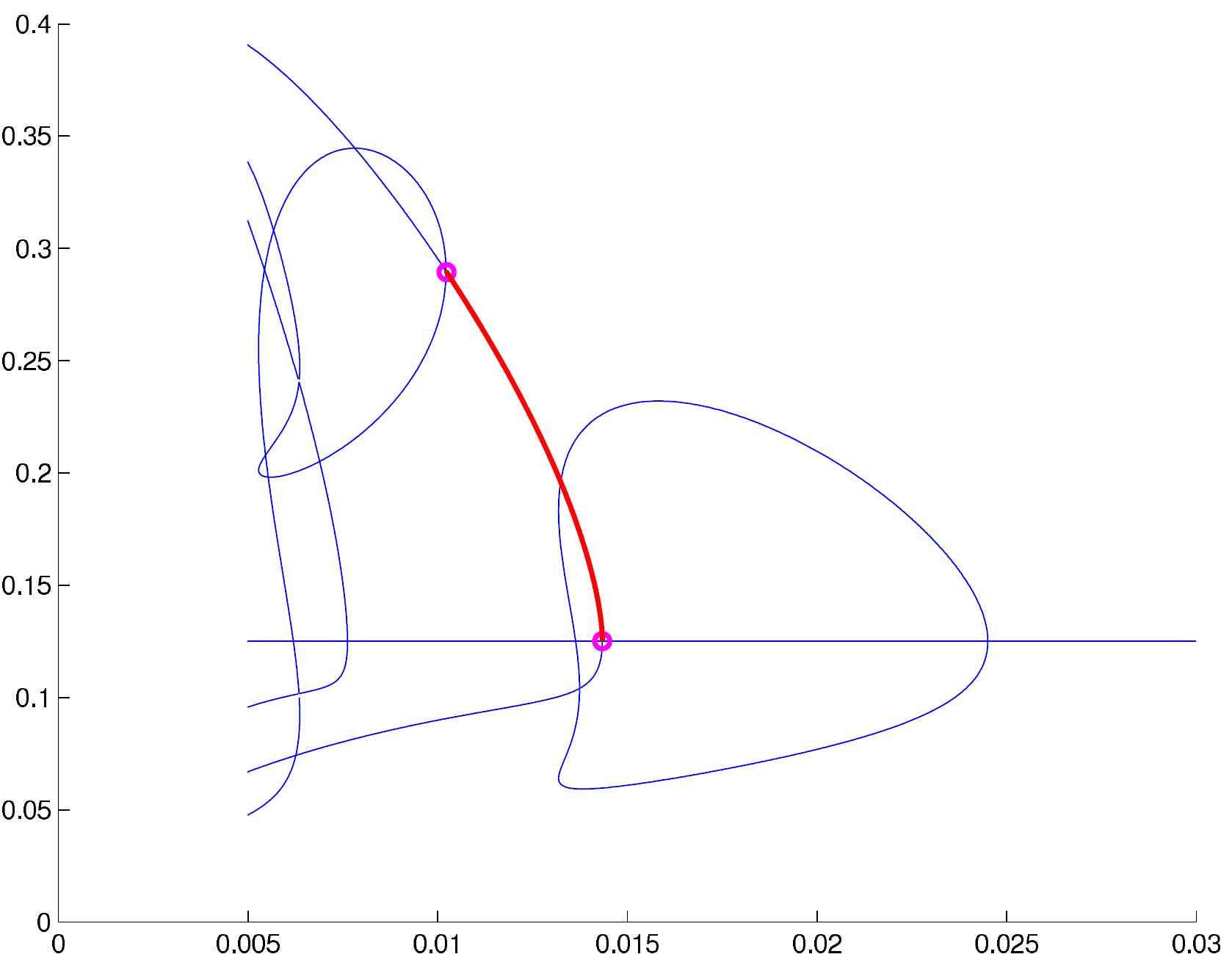}} \\
\vspace{-.4cm}
\subfigure[]{\includegraphics[width=5cm]{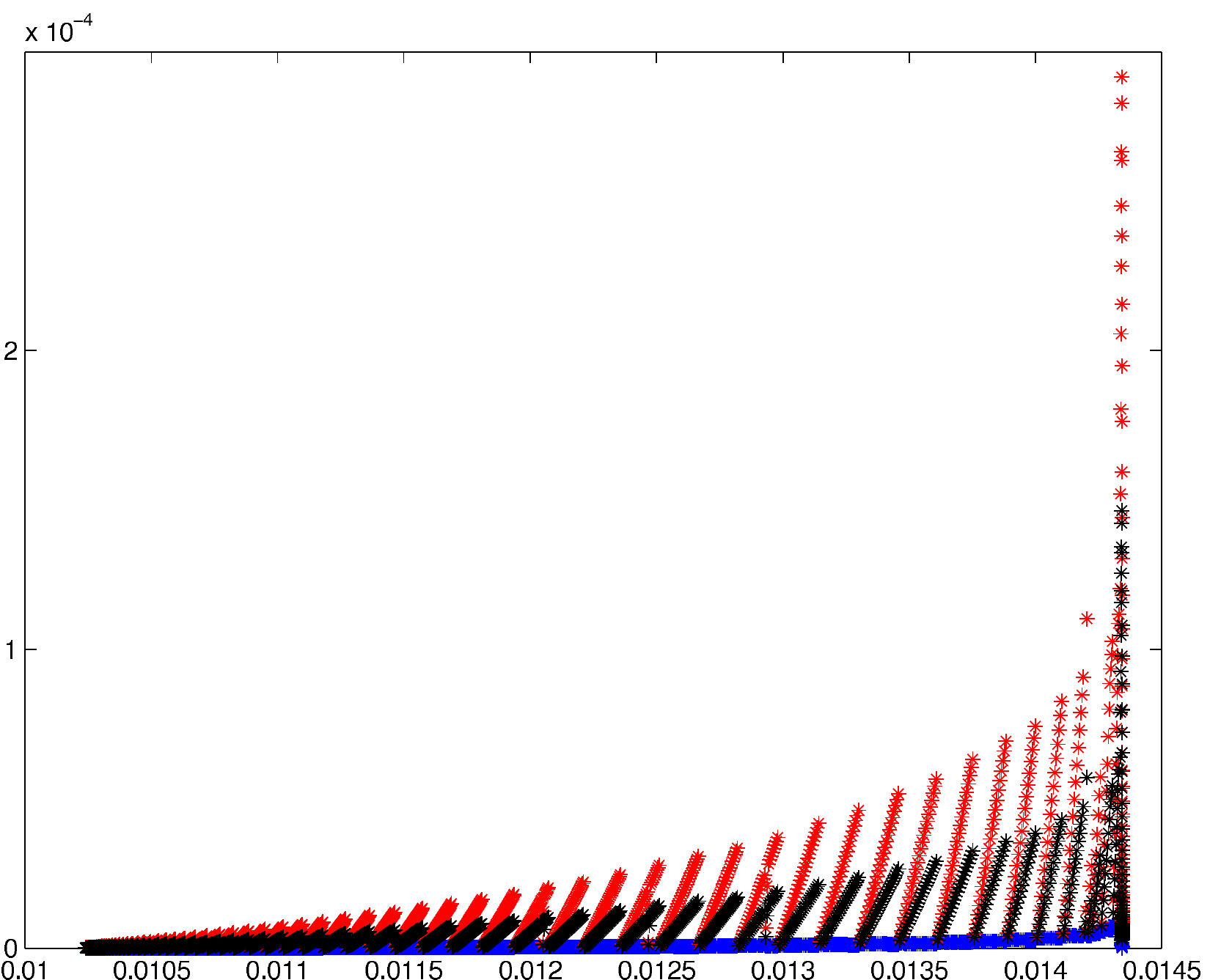}} 
\end{minipage}
\vspace{-.4cm}
\caption{\small Proofs of the branches in red with $q=2$. {\bf On the left:} The radius of the {\em tubes} in which we proved the existence of a unique portion of curve are between $2.1e^{-4}$ and $4.7e^{-8}$. {\bf On the right:} The radius of the {\em tubes} in which we proved the existence of a unique portion of curve are between $2.2e^{-4}$ and $1.7e^{-8}$. For parts (b) and (d), the values of $\min(I)$ are in blue and the values of $\max(I)$ are in red along the branch. In black the values of $r$ used for the rigorous verification using interval arithmetic.}
\end{center}
\end{figure}

\section{Proofs of two main results of Section~\ref{sec:introduction} } \label{sec:proofs}

\subsection{Proof of Theorem~\ref{thm:bif_diagram}}

Recall \eqref{eq:def_ball} and fix $\rho=10$. Set $K=10^4$ in the bounds of Lemma~\ref{n>m} and set $K=3M$ in those of Lemma~\ref{n<m}. Set the multiplicative coefficient used to change $\Delta_s$ equal to $\frac{10}{9}$ and the one used to change $m$ equal to $1.02$. That means that in the algorithm of Section~\ref{sec:algorithm}, taking $\Delta_s$ larger means setting $\Delta_s=\frac{10}{9} \Delta_s$, taking $\Delta_s$ smaller means setting $\Delta_s=\frac{9}{10} \Delta_s$, taking $m$ larger means setting $m= \lfloor1.02 m \rfloor$ and taking $m$ smaller means setting $m = \lfloor m/1.02 \rfloor$. We proved that in a small tube (whose size is given by $r$) of each portion of curve represented in Figure~\ref{japonais} between two bifurcation points, there exists a unique smooth curve representing exactly one steady states of (\ref{eq:mimura_system}). Note that the method cannot prove that those curves really reach the bifurcation points, but we can virtually go as close as we want to those bifurcation points, the only limit being the computational cost. On the other hand, applying the result of Corollary~\ref{corollary:Jinvertible}, we can conclude that along the rigorously computed smooth branches, there are no secondary bifurcation of steady states.
In Table~\ref{fig:computational_cost}, Figure~\ref{fig:Ds_m_as_functions_of_q} and Figure~\ref{espace1}, one has some example of the parameters used, the running time required to compute the branches and some spatial representations of solutions on different branches. \qed
%


\subsection{Proof of Corollary~\ref{corollary:co-existence}}

Consider one portion of branch we have proven. We have the existence of a smooth function $\tilde U$ (defined on $[0,1]$) parametrising our portion of curve, and we know that it lies in a {\em tube}  of radius $r$ around our numerical solution. So we know the initial point and the final point with a precision of $r$ in $\left\Vert\cdot\right\Vert_q$. In particular, if $\overline d_{in}$ (respectively $\overline d_f$) is the numerical value of $d$ from which the branch starts (respectively at which the branch finishes) then there exists a $\tilde d_{in}$ in $\displaystyle{\left[\overline d_{in}-\frac{r}{\rho},\overline d_{in}+\frac{r}{\rho}\right]}$ and a $\tilde d_{f}$ in $\displaystyle{\left[\overline d_{f}-\frac{r}{\rho},\overline d_{f}+\frac{r}{\rho}\right]}$ such that $\tilde U_d(0)=\tilde d_{in}$ and $\tilde U_d(1)=\tilde d_{f}$. Thus, if the branch we are considering is such that
\[
\overline d_{in}-\frac{r}{\rho}>0.006  \quad \mbox{and} \quad \overline d_{f}+\frac{r}{\rho}<0.006,
\]
or reversely 
\[
\overline d_{in}+\frac{r}{\rho}<0.006  \quad \mbox{and} \quad \overline d_{f}-\frac{r}{\rho}>0.006
\]
then the intermediate value theorem yields the existence of an $s$ in $(0,1)$ such that $\tilde U_d(s)=0.006$. Notice that if $d=0.006$ is too close to $\overline d_{in}$ or $\overline d_{f}$ for those hypothesis to be satisfied, you just have to consider several consecutive portions of curve rather than a single one. On the diagram we proved, there are eleven such portions (see Figure~\ref{fig10}). To complete our proof, we need to check that each solution is different from one another, and we are able to do this because the radius $r$ we get in our proofs are small enough. Remember that what we are representing on our bifurcation diagram is the value of $z$ at $0$ and that
\[
\tilde z(0)=\frac{\tilde z_0}{2}+\sum_{n=1}^{\infty}\tilde z_n.
\]
Besides we know that $\left\Vert \tilde U(s) - \overline U_s \right\Vert_q<r$ so 
\[
\left\vert \tilde z(0) - \left(\frac{\overline z_0}{2}+\sum_{n=1}^{m-1}\overline z_n\right) \right\vert < \frac{r}{2}+\sum_{n=1}^{\infty}\frac{r}{n^q}.
\]
In fact, the quantity $\displaystyle{\frac{\overline z_0}{2}+\sum_{n=1}^{m-1}\overline z_n}$ is the one we represented on the diagram, and so the error we did by using this representation is bounded by
\[
\varepsilon_r \bydef r\left(\frac{1}{2}+\sum_{n=1}^{\infty}\frac{1}{n^q}\right).
\]
With the value of $q$ we used and the $r$ we got, those $\varepsilon_r$ were always smaller than $10^{-4}$ and hence the error we made are in fact lying in thickness of the line on the diagram, so we are sure that we have eleven different solutions. \qed


\appendix
\section{Appendix}

\subsection{Sharper estimates for \boldmath $q^*(M)\leq q<2$ \unboldmath } \label{appendix:sharper_estimates}

What we have already proven in the proof of Proposition~\ref{alpha} in (\ref{lim}) is that, for all $ q \in [q^*(M),2)$ and $n\geq M$
\[
\Psi_n^q \leq 2 + 4\sum_{k=1}^{\infty}\frac{1}{k^q} + \chi_n(q),
\]
and since we needed a uniform bound, we took $\displaystyle{2 + 4\sum_{k=1}^{\infty}\frac{1}{k^q} + \chi_M(q)}$. Since $\lim\limits_{n}\chi_n(q)=0$, we would like to take $m$ and hence $M=2m-1$ larger to improve the estimate, but the point of getting a sharp estimate is to allow us to take $m$ smaller, so this does not really make sense. However, there is a way to rigorously compute a bound that is almost optimal, without increasing $m$. Let $\varepsilon>0$, $\exists n_0$, $\forall n\geq n_0$, $\chi_n(q)<\varepsilon$ and since $\chi_n(q)$ is deceasing such $n_0$ can be computed numerically and rigorously using interval arithmetic. Then we have that, for all $ n\geq M$
\begin{small}
\[
\Psi_n^q \leq \max\left(2 + 4\sum_{k=1}^{K}\frac{1}{k^q} + \frac{4}{(q-1)K^{q-1}}+\varepsilon ,\max\limits_{M\leq n <n_0}\left(2 + 2\sum_{k=1}^K\frac{1}{k^q} + \frac{2}{(q-1)K^{q-1}} + \sum_{k=1}^{n-1} \frac{ n^q}{k^q \left(n-k\right)^q}\right)\right),
\]
\end{small}
the max being computed rigorously using interval arithmetic. We can have a bound as sharp as we want by taking $\varepsilon$ small, but at the expense of some computational cost which is in $\mathcal{O}(n_0^2)$. Indeed, when $q$ goes to $2$, $\chi_n(q)$ is equivalent to $\displaystyle{\frac{q}{2-q}\frac{1}{\left\lfloor \frac{n}{2} \right\rfloor^{q-1}}}$ and hence $\displaystyle{n_0 \approx \left(\frac{2q}{2-q}\frac{1}{\varepsilon}\right)^{\frac{1}{q-1}}}$.
 
\subsection{Sharper uniform bounds for \boldmath $s\in[0,1]$ \unboldmath for \boldmath $Y$ \unboldmath} \label{appendix:sharper_s_bound}
\label{sharp}
To prove that $\tilde T$ is a uniform contraction, we have in Section~\ref{Y} to bound uniformly in $s$ terms of the form $g(s) \bydef \left \vert \alpha s^2 + \beta s + \gamma \right\vert$ for $s$ in $[0,1]$. We did that the simplest way, using triangular inequality to say that 
\begin{eqnarray*}
g(s) & \leq &  \vert \alpha \vert s^2 + \vert \beta \vert s + \vert \gamma \vert \\
& \leq &  \vert \alpha \vert + \vert \beta \vert + \vert \gamma \vert,\quad \forall s\in [0,1].
\end{eqnarray*}
However, given the expression of $g$, we can get a better bound which depends on whether the apex of $g$ is between $0$ and $1$ or not. More explicitly,
\[
\forall s\in[0,1],\ g(s)\leq 
\left\{
\begin{aligned}
& \max\left(\vert g(0)\vert ,\vert g(1)\vert\right)=\max\left(\vert \gamma \vert, \vert \alpha + \beta + \gamma \vert \right),\quad \mbox{if }-\frac{\beta}{2\alpha} \notin [0,1],\\
& \max\left(\vert g(0)\vert , \left\vert g\left(\frac{-\beta}{2\alpha}\right)\right\vert,\vert g(1)\vert\right)=\max\left(\vert \gamma \vert ,\left\vert -\frac{\beta^2}{4\alpha}+\gamma\right\vert \vert \alpha + \beta + \gamma \vert\right),\quad \mbox{else.}
\end{aligned}
\right.
\]

\end{document}